\newtheorem{theorem}{Theorem}[section]
\newtheorem*{theorem*}{Theorem}
\newtheorem{lemma}[theorem]{Lemma}
\newtheorem{corollary}[theorem]{Corollary}
\newtheorem{proposition}[theorem]{Proposition}
\newtheorem{definition}[theorem]{Definition}
\theoremstyle{definition}
\newtheorem{remark}[theorem]{Remark}
\newtheorem{example}[theorem]{Example}
\begin{document}
 
 \providecommand{\Gen}{\mathop{\rm Gen}\nolimits}%
\providecommand{\cone}{\mathop{\rm cone}\nolimits}%
\providecommand{\Sub}{\mathop{\rm Sub}\nolimits}%
\providecommand{\rad}{\mathop{\rm rad}\nolimits}%
\providecommand{\coker}{\mathop{\rm coker}\nolimits}%
\def\A{\mathcal{A}}
\def\C{\mathcal{C}}
\def\D{\mathcal{D}}
\def\P{\mathcal{P}}
\def\X{\mathbb{X}}
\def\Y{\mathbb{Y}}
\def\E{\mathcal{E}}
\def\Z{\mathcal{Z}}
\def\K{\mathcal{K}}
\def\F{\mathcal{F}}
\def\T{\mathcal{T}}
\def\U{\mathcal{U}}
\def\V{\mathcal{V}}
\def\W{\mathsf{W}}
\def\PP{{\mathbb P}}
\providecommand{\add}{\mathop{\rm add}\nolimits}%
\providecommand{\End}{\mathop{\rm End}\nolimits}%
\providecommand{\Ext}{\mathop{\rm Ext}\nolimits}%
\providecommand{\Hom}{\mathop{\rm Hom}\nolimits}%
\providecommand{\ind}{\mathop{\rm ind}\nolimits}%
\newcommand{\module}{\mathop{\rm mod}\nolimits}%

\newcommand{\sbt}{\,\begin{picture}(-1,1)(-1,-3)\circle*{3}\end{picture}\ }

\title[Cluster Groups of Mutation-Dynkin Type $A_{n}$]{A Lattice Isomorphism Theorem for Cluster Groups of Mutation-Dynkin Type $A_{n}$}

\author{Isobel Webster}
\address{School of Mathematics \\ 
University of Leeds \\ 
Leeds, LS2 9JT \\ 
United Kingdom \\
}
\email{mm09iw@leeds.ac.uk}
\date{November 2018}
 \maketitle
\begin{abstract}
Each quiver appearing in a seed of a cluster algebra determines a corresponding group, which we call a cluster group, which is defined via a presentation.  Grant and Marsh showed that, for quivers appearing in seeds of cluster algebras of finite type, the associated cluster groups are isomorphic to finite reflection groups. As for finite Coxeter groups, we can consider parabolic subgroups of cluster groups.  We prove that, in the type $A_{n}$ case, there exists an isomorphism between the lattice of subsets of the defining generators of the cluster group and the lattice of its parabolic subgroups. Moreover, each parabolic subgroup has a presentation given by restricting the presentation of the whole group. 
\end{abstract}
\thanks{ }

\maketitle
\section{introduction}

This paper examines cluster groups, which arise from the theory of cluster algebras. Cluster algebras were introduced by Fomin and Zelevinsky in order to construct a model for dual canonical bases in semisimple groups \cite{FZ1}. However, their applications have spread into numerous areas of Mathematics such as combinatorics \cite{FZ2}, quiver representation theory \cite{MRZ} and Poisson geometry \cite{GSV}.  

A (skew-symmetric) cluster algebra \cite[Section 2]{FZ1} is a subalgebra of the field of rational functions, $\mathbb{F} = \mathbb{Q}(u_{1}, u_{2},..., u_{m})$, on a finite number of indeterminates. It is determined by an initial input, known as a \textit{seed}, consisting of a free generating set of $\mathbb{F}$ over $\mathbb{Q}$, i.e. a quiver with no loops or two-cycles. A combinatorial process called \textit{mutation} is repeatedly applied to produce more seeds. A generating set for the cluster algebra is then chosen from the union of the seeds. The elements of this generating set are known as the \textit{cluster variables} of the cluster algebra. This notion of mutating, which involves a local change in the quiver relating to a choice of vertex, is at the heart of the definition of a cluster algebra. 

A cluster algebra is of finite type if it has finitely many cluster variables. The classification \cite{FZ} of the cluster algebras of finite type corresponds to the classification of the finite root systems via Cartan matrices and simply-laced Dynkin diagrams. In this classification it is shown that a cluster algebra is of finite type if and only if it has a seed containing a cluster quiver which is an orientation of a Dynkin diagram (or a disjoint union of Dynkin diagrams). A cluster quiver is said to be of mutation-Dynkin type if it lies in some seed of a cluster algebra of finite type or, equivalently, if there is a finite sequence of mutations taking it to an oriented Dynkin diagram (or a disjoint union of oriented Dynkin diagrams). 

The article \cite{BM} associates to each simply-laced mutation-Dynkin quiver a group presentation and shows that the corresponding group is isomorphic to a finite reflection group. Moreover, this reflection group will be of the same Dynkin type as the cluster algebra from which the quiver arises.

Several subsequent publications have built on this work. The article \cite{FT} provides similar presentations for affine Coxeter groups while \cite{GM} and \cite{HHLP}, independently of one another, extended the results of \cite{BM} to provide presentations for Artin braid groups in the simply-laced case and the finite type case, respectively. Moreover, it is shown that the groups corresponding to both of these presentations are invariant under mutation of the quiver. 

Consider the group presentation associated to each simply-laced mutation-Dynkin quiver in \cite{GM}, together with the additional set of relations that specify that the square of each generator is equal to the identity. Then \cite[Lemma 2.5]{GM} shows the resulting group is isomorphic to the group with presentation defined by \cite{BM}. Thus, we obtain another presentation associated to a mutation-Dynkin quiver that gives a group that is isomorphic to a finite reflection group. It is the group presentation based on the work done in \cite{GM} that is considered here. In particular, we will be considering the group presentations associated to mutation-Dynkin quivers of type $A_{n}$. However, these presentations make sense for any cluster quiver and may be interesting to consider more generally. Due to the context given above, we call the corresponding group a `cluster group' with the aim of exploring its properties in general.

\begin{definition} Let $Q$ be a cluster quiver. Following \cite{FT}, if $i$ and $j$ are vertices of $Q$ which are joined by a single arrow, then we call this arrow \textbf{simple}. 
\end{definition}

\begin{definition}\label{2.7} Let $Q$ be a cluster quiver with vertex set $Q_{0}$. The \textbf{cluster group associated to $Q$}, denoted by $G_{Q}$, is defined to be the group with group presentation $\langle T | R \rangle$ where $$T = \lbrace t_{i} : i \in Q_{0} \rbrace$$ and $R$ is the following set of relations:
\begin{itemize}
\item[$(a)$] For all $i \in Q_{0}$, $t_{i}^{2} = e$.
\item[$(b)$] \textbf{The braid relations:} For all $i, j \in Q_{0}$,
\begin{itemize}
\item[] 
\begin{itemize}
\item[(i)] $t_{i}t_{j} = t_{j}t_{i}$ if there is no arrow between $i$ and $j$ in $Q$.
\item[(ii)] $t_{i}t_{j}t_{i} = t_{j}t_{i}t_{j}$ if $i$ and $j$ are joined by a simple arrow in $Q$. 
\end{itemize} 
\end{itemize} 
\item[$(c)$] \textbf{The cycle relations:}
\begin{align*}
t_{i_{1}}t_{i_{2}}t_{i_{3}}...t_{i_{r}}t_{i_{1}}...t_{i_{r-2}} &= t_{i_{2}}t_{i_{3}}...t_{i_{r}}t_{i_{1}}t_{i_{2}}t_{i_{3}}...t_{i_{r-1}} \\
&= t_{i_{3}}...t_{i_{r}}t_{i_{1}}t_{i_{2}}t_{i_{3}}t_{i_{4}}...t_{i_{r}} \\
&= ... \\
&= t_{i_{r}}t_{i_{1}}t_{i_{2}}t_{i_{3}}...t_{i_{r}}t_{i_{1}}...t_{i_{r-3}}
\end{align*} for every chordless, oriented cycle $$i_{1} \longrightarrow i_{2} \longrightarrow i_{3} \longrightarrow ... \longrightarrow i_{r} \longrightarrow i_{1}$$ in $Q$, in which all arrows are simple. 
\end{itemize} 
\end{definition}

\begin{remark} The relations $(b)(ii)$ and $(c)$ are defined to be compatible with the group presentations associated to diagrams of finite type given in \cite{GM} and the group presentations associated to diagrams of affine type given in \cite{FT}.
\end{remark}

For general cluster quivers the corresponding cluster group is not necessarily invariant under mutation of the quiver \cite[Section 7]{FT}. However, for this paper we will consider only simply-laced mutation-Dynkin quivers. In this case, as we will discuss in Section 2, the cluster group presentation is compatible with quiver mutation \cite[Lemma 2.5]{GM}, \cite[Theorem 5.4]{BM}.

Let $\mathcal{I}$ denote the set of subsets of $T$. Given a cluster quiver $Q$ and $I \in \mathcal{I}$, we use $G_{I}$ to denote the subgroup of the cluster group $G_{Q}$ generated by $I$. We call any subgroup of $G_{Q}$ obtained in this way a parabolic subgroup. We state the first of our two main results here, which will be proved in Section 3.

\begin{theorem*}(Theorem $\ref{latticeiso}$) Suppose $Q$ is a quiver of mutation-Dynkin type with connected components $Q_{1},..., Q_{r}$ of type $A_{n_{1}},..., A_{n_{r}}$ for some $n_{1},..., n_{r} \in \mathbb{Z}^{+}$, then there exists a lattice isomorphism $$\Phi: \mathcal{I}\longrightarrow \tilde{\mathcal{G}},$$
$$\Phi: I\longmapsto G_{I},$$ where $\tilde{\mathcal{G}}$ denotes the collection of all the parabolic subgroups of $G_{Q}$. 
\end{theorem*}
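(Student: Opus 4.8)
The plan is to transport the whole question into the symmetric group. By \cite{BM} (together with the mutation‑compatibility statements \cite[Lemma 2.5]{GM}, \cite[Theorem 5.4]{BM}) the cluster group of a connected mutation‑Dynkin quiver of type $A_n$ is isomorphic to the reflection group of type $A_n$, i.e.\ to the symmetric group $S_{n+1}$ on $n+1$ letters, via an isomorphism under which each defining generator $t_i$ is sent to a reflection, that is, to a transposition $\tau_i \in S_{n+1}$; and for a quiver obtained from $Q$ by mutation at a vertex $k$ the generating set of reflections changes by an explicit rule (conjugation of certain $\tau_i$ by $\tau_k$). The general statement reduces to the connected case: writing $T = T_1 \sqcup \cdots \sqcup T_r$ according to the components $Q_1,\dots,Q_r$, generators attached to different components commute, so $G_Q = G_{Q_1}\times\cdots\times G_{Q_r}$; each $J\subseteq T$ decomposes as $J = J_1\sqcup\cdots\sqcup J_r$ with $G_J = (G_{Q_1})_{J_1}\times\cdots\times(G_{Q_r})_{J_r}$, whence $\mathcal I$ and $\tilde{\mathcal G}$ are the corresponding products of lattices and it suffices to treat each $Q_k$. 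So assume $Q$ is connected of type $A_n$.

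The combinatorial heart is the claim that \emph{the transpositions $\tau_1,\dots,\tau_n$ are pairwise distinct and form the edge set of a spanning tree $\mathcal T$ on the $n+1$ letters of $S_{n+1}$.} I would prove this by induction on the length of a mutation sequence taking $Q$ to a fixed acyclic orientation of the $A_n$ path (such a sequence exists by definition of mutation‑Dynkin type). In the base case the $t_i$ are the simple reflections, i.e.\ the transpositions $(i\,,\,i{+}1)$, which are exactly the edges of the path $1\,\text{--}\,2\,\text{--}\cdots\text{--}\,(n{+}1)$. For the inductive step one invokes the mutation rule above: if $e_k = \{a,b\}$ is the edge of $\mathcal T$ given by $\tau_k = (a\,b)$, then conjugating the affected generators by $\tau_k$ slides each edge of $\mathcal T$ incident to $a$ onto the corresponding edge incident to $b$, and fixes the edges disjoint from $\{a,b\}$; this merely transplants the subtrees hanging off $a$ onto $b$, so the result is again a tree with $n$ edges, and since the $\tau_i$ still generate $S_{n+1}$ these $n$ edges are distinct and span. (Conceptually $\mathcal T$ is the dual tree of the triangulation of the $(n{+}3)$‑gon corresponding to $Q$, with $\tau_i$ swapping the two triangles adjacent to the diagonal $i$; but the inductive argument is self‑contained.) This verification — in particular making precise the rule by which the generating reflections transform under mutation, so that the induction is legitimate, and checking it preserves tree‑ness — is where the real work lies; everything else is bookkeeping with trees and symmetric groups.

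Granting the claim, the theorem follows quickly. The key consequence is: \emph{if $t_i \in G_J$ then $i \in J$.} Indeed $G_J$ maps onto the subgroup of $S_{n+1}$ generated by $\{\tau_j : j\in J\}$, which is the direct product of the symmetric groups on the connected components of the subforest $\mathcal T_J$ of $\mathcal T$ with edge set $\{e_j : j\in J\}$; hence $\tau_i = (u\,v)$ lies in $G_J$ iff $u$ and $v$ lie in a single component of $\mathcal T_J$, i.e.\ iff $\mathcal T_J$ contains a path from $u$ to $v$. Any such path is a path in the tree $\mathcal T$, hence is the unique one joining $u$ and $v$ in $\mathcal T$, namely the single edge $e_i$; so $e_i \in \mathcal T_J$ and $i\in J$. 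Now $\Phi$ is surjective onto $\tilde{\mathcal G}$ by the definition of parabolic subgroup and is clearly order‑preserving, and it is order‑reflecting: $G_I\subseteq G_J$ forces $t_i\in G_J$ for all $i\in I$, hence $I\subseteq J$. In particular $\Phi$ is injective, so it is an isomorphism of posets, and therefore of lattices (an order isomorphism from a lattice onto a poset makes that poset a lattice and the map a lattice isomorphism). Concretely the meet in $\tilde{\mathcal G}$ is genuine intersection, $G_I\cap G_J = G_{I\cap J}$ — by the same uniqueness of tree‑paths, the common refinement of the component partitions of $\mathcal T_I$ and $\mathcal T_J$ equals the component partition of $\mathcal T_{I\cap J}$ — while the join is $\langle G_I, G_J\rangle = G_{I\cup J}$, completing the proof.
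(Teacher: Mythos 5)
Your strategy is sound and, at its core, the same transport the paper uses: realize the defining generators of $G_{Q}$ as the edge transpositions of a tree inside a symmetric group, and then let uniqueness of paths in a tree deliver order-reflection (your key consequence ``$t_{i}\in G_{J}\Rightarrow i\in J$'' is exactly the paper's Proposition~\ref{parabolicgenerators}). Where you genuinely differ is in how the tree is obtained and how the lattice statement is assembled. The paper never inducts on a mutation sequence: it uses the bijection between mutation-Dynkin $A_{n}$ quivers and triangulations of an $(n+3)$-gon (Theorem~\ref{triangulationquivers}), takes the braid graph (the dual tree of the triangulation), and invokes Sergiescu's tree presentation of the symmetric group (Proposition~\ref{sympresentation}) to produce the isomorphism $\pi_{Q}$ onto $\Sigma_{n+1}$, or onto a Young subgroup of $\Sigma_{n+r}$ in the decomposable case (Lemmas~\ref{braidgraphiso1}, \ref{braidgraphiso2}); the tree realization is thus built in by construction. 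It then routes the lattice isomorphism through partitions and Young subgroups ($\mathcal{I}\to\tilde{\mathcal{P}}\to\tilde{\mathcal{Y}}\to\tilde{\mathcal{G}}$, Lemmas~\ref{latticeeqiv}--\ref{phi7} and~\ref{G}), which simultaneously shows $\tilde{\mathcal{G}}$ is a sublattice of the full subgroup lattice and that $G_{I}\cong Y(\rho_{I})$, facts reused for Theorem~\ref{paraboliccluster}. Your reduction to connected components and your direct order-isomorphism argument (with $G_{I}\cap G_{J}=G_{I\cap J}$ and $\langle G_{I},G_{J}\rangle=G_{I\cup J}$) are fine, but your route shifts the burden onto the inductive claim, and there the rule you state is not the actual one: under the Barot--Marsh/Grant--Marsh mutation isomorphism only the generators joined to $k$ by an arrow of one fixed orientation are conjugated by $t_{k}$ --- at most one edge at each endpoint of $e_{k}$, and those two move across $e_{k}$ in opposite directions --- not every edge incident to a single endpoint. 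A corrected version still preserves tree-ness (reattaching a subtree across the edge $\{a,b\}$ cannot create a cycle), and in fact you can bypass the case analysis entirely: the induction is only needed to see that each $t_{i}$ maps to a transposition (immediate, since each mutation step sends generators to generators or conjugates), and then $n$ transpositions generating $\Sigma_{n+1}$ must form $n$ distinct edges of a connected graph on $n+1$ vertices, i.e.\ a spanning tree. With that repair your argument goes through; the paper's triangulation route buys the same tree for free and yields the extra structural statements ($\tilde{\mathcal{G}}$ a sublattice, $G_{I}\cong Y(\rho_{I})$) needed later.
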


In the case when the quiver is an oriented simply-laced Dynkin diagram, $\Delta$, the associated cluster group presentation is precisely the Coxeter presentation of type $\Delta$. By \cite{BM} and \cite[Lemma 2.5]{GM}, the cluster group presentations associated to mutation-Dynkin quivers give groups which are isomorphic to finite reflection groups and thus are finite Coxeter groups. This can also be shown directly by a simple extension of \cite[Proposition 2.9]{GM}. Under this isomorphism, the defining generators of $G_{Q}$ are mapped to elements of the set of reflections, $\lbrace ws_{i}w^{-1} : w \in W_{\Delta}, s_{i} \in S \rbrace$, of the Coxeter system $(W_{\Delta}, S)$. Thus for any $I \subseteq T$, the parabolic subgroup $G_{I}$ is isomorphic to a reflection subgroup of $W_{\Delta}$. Reflection subgroups of Coxeter groups are themselves Coxeter groups \cite{Dyer}. 

Each finite Coxeter group has an associated root system and conversely each root system defines a Coxeter group (see \cite{Humphreys}). Suppose $\Delta$ is a Dynkin diagram of type $A_{n}$ and $W_{\Delta} $ is the corresponding Coxeter group with root system $\Phi$. For any reflection subgroup $W'$ of $W_{\Delta} $, the subset $$\Psi = \lbrace \alpha : s_{\alpha} \in W' \rbrace \subseteq \Phi $$ is a subsystem of $\Phi$ and $W'$ is the Coxeter group defined by $\Psi$ \cite{DL}. By \cite[Corollary 1]{DL}, the Dynkin diagram of $\Psi$ will be of type $A_{n'}$ for some $n' \leq n$. Thus $W'$ will be of type $A_{n'}$. It follows that, when $Q$ is of mutation-Dynkin type $A_{n}$, each parabolic subgroup of $G_{Q}$ is isomorphic to a cluster group associated to a quiver of type $A_{n'}$, for some $n' \leq n$. In fact, our second main result shows that each parabolic subgroup has a cluster group presentation given by restricting the presentation of the whole group. 

Given any $I \subseteq T$, let $Q_{I}$ denote the full subquiver of $Q$ on the vertices corresponding to the elements of $I$. To distinguish between the defining generators of $G_{Q}$ and $G_{Q_{I}}$, we label the generators of the cluster presentation of $G_{Q_{I}}$ by $t'_{i}$, for each $t_{i} \in I$. 

\begin{theorem*}(Theorem $\ref{paraboliccluster}$) Suppose $Q$ is a quiver of mutation-Dynkin type with connected components $Q_{1},..., Q_{r}$ of type $A_{n_{1}},..., A_{n_{r}}$ for some $n_{1},..., n_{r} \in \mathbb{Z}^{+}$. For any $I \subset T$, there exists an isomorphism 
\begin{align*}
G_{Q_{I}} &\longrightarrow G_{I}, \\
t'_{i} &\longmapsto t_{i}.
\end{align*}
\end{theorem*}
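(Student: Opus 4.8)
The plan is to construct the isomorphism $G_{Q_I} \to G_I$ by exhibiting mutually inverse homomorphisms. In the forward direction, I would send each defining generator $t'_i$ of $G_{Q_I}$ to $t_i \in G_I \leq G_Q$. To see this is well defined, I must check that the image satisfies every defining relation of the cluster presentation of $G_{Q_I}$, that is, relations $(a)$, $(b)$ and $(c)$ of Definition~\ref{2.7} read off the subquiver $Q_I$. Since $Q_I$ is a \emph{full} subquiver of $Q$, any arrow of $Q_I$ is an arrow of $Q$, and two vertices of $I$ joined by a simple arrow in $Q_I$ are joined by a simple arrow in $Q$ (fullness ensures no extra arrows are hidden). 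So relations $(a)$ and $(b)$ for $Q_I$ are immediate consequences of the corresponding relations in $G_Q$. The only subtlety is relation $(c)$: a chordless oriented cycle with simple arrows in $Q_I$ must also be a chordless oriented cycle with simple arrows in $Q$ — again this uses fullness of $Q_I$, since a chord in $Q$ would, by fullness, appear in $Q_I$ too. Hence the assignment $t'_i \mapsto t_i$ extends to a surjective homomorphism $\pi \colon G_{Q_I} \twoheadrightarrow G_I$ (surjective because the $t_i$, $t_i \in I$, generate $G_I$ by definition).

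The real work is producing an inverse, i.e.\ showing $\pi$ is injective, equivalently that $G_I$ is no ``smaller'' than $G_{Q_I}$. Here I would invoke the Coxeter-theoretic description already assembled in the excerpt. Since $Q$ is of mutation-Dynkin type $A_n$ (with components $A_{n_1},\dots,A_{n_r}$), the cluster group $G_Q$ is isomorphic to the finite Coxeter group $W_\Delta$ of the corresponding type, with the defining generators $t_i$ mapping to reflections. Under this isomorphism $G_I$ maps to the reflection subgroup $W' := \langle\, \text{images of } t_i : t_i \in I \,\rangle$ of $W_\Delta$. By \cite{Dyer}, \cite{DL} this $W'$ is itself a Coxeter group, and by \cite[Corollary 1]{DL} it is a product of Coxeter groups of type $A$. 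I would then identify the order (or the isomorphism type) of $W'$ precisely. The natural way is: by Theorem~\ref{latticeiso} the map $I \mapsto G_I$ is a lattice isomorphism onto the parabolic subgroups, so distinct subsets give distinct subgroups; combined with a rank/combinatorial count this pins down $|W'|$. Concretely, a full subquiver $Q_I$ of a mutation-Dynkin type $A$ quiver is again of mutation-Dynkin type $A$ (a fact that should be established en route to, or as part of, Theorem~\ref{latticeiso}), so $G_{Q_I} \cong W_{\Delta'}$ for the appropriate product-of-type-$A$ Coxeter group $\Delta'$, and one checks $W' \cong W_{\Delta'}$ as well — both being the type-$A$ subsystem of $\Phi$ cut out by the relevant roots.

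Putting these together: $\pi \colon G_{Q_I} \to G_I$ is a surjection between two finite groups each isomorphic to $W_{\Delta'}$, hence (having equal order) it is an isomorphism, and its inverse sends $t_i \mapsto t'_i$ as required. I expect the main obstacle to be the bookkeeping in the previous paragraph: namely verifying carefully that the reflection subgroup $W'$ generated by the \emph{images of the cluster generators in $I$} is exactly the Coxeter group of the right type $A$ subsystem — one must make sure the cluster generators $t_i$ ($t_i \in I$) map to a set of reflections whose generated subgroup has the predicted rank, with no collapse (distinct $t_i$ could a priori map to the same reflection, or to reflections generating a smaller group). This is precisely where Theorem~\ref{latticeiso} — guaranteeing that the correspondence $I \mapsto G_I$ is injective and respects the lattice structure — does the essential work, so in practice this theorem should be proved as a consequence of (or in tandem with) Theorem~\ref{latticeiso} rather than independently.
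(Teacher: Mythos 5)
Your first step is fine: since $Q_{I}$ is a full subquiver, every defining relation of $G_{Q_{I}}$ (including the chordless-cycle relations, which stay chordless in $Q$ by fullness) is a consequence of relations of $G_{Q}$ involving only generators in $I$, so $t'_{i}\mapsto t_{i}$ gives a well-defined surjection $G_{Q_{I}}\twoheadrightarrow G_{I}$. The gap is in the injectivity half. You reduce it to showing that the reflection subgroup $W'$ of $W_{\Delta}$ generated by the images of the $t_{i}$, $t_{i}\in I$, is isomorphic to $W_{\Delta'}$ where $\Delta'$ is the type of $Q_{I}$, and you propose to get this from Theorem~\ref{latticeiso} plus Dyer--Lehrer plus ``a rank/combinatorial count''. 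None of these deliver it: Theorem~\ref{latticeiso} only says the assignment $I\mapsto G_{I}$ is injective and order-preserving, which says nothing about the isomorphism type or order of any single $G_{I}$; Dyer--Lehrer tells you $W'$ is a product of type-$A$ Coxeter groups, but not which product; and the number of generating reflections does not determine the rank of the subgroup they generate (in $\Sigma_{4}$, the transpositions $(1\,2),(2\,3),(1\,3)$ generate $\Sigma_{3}$ of rank $2$, while $(1\,2),(2\,3),(3\,4)$ generate $\Sigma_{4}$ of rank $3$). So ``having equal order'' is exactly the unproved claim, and it is the heart of the theorem, not bookkeeping. In particular, it is the place where one must rule out that the specific reflections $\pi_{Q}(t_{i})$, $t_{i}\in I$, generate something of smaller (or differently distributed) rank than the type read off from $Q_{I}$.

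What fills this gap in the paper is the explicit combinatorial identification of both groups with the same Young subgroup: Lemma~\ref{G} gives $G_{I}\cong Y(\rho_{I})$, where $\rho_{I}$ is the partition of vertices of $\Gamma_{Q}$ into connected components after deleting the edges $E_{i}$ with $t_{i}\notin I$; and a triangulation argument (cutting along the diagonals $d_{i}$ with $t_{i}\notin I$) shows that the braid graph of $Q_{I}$ is literally $\Gamma_{I}$, so Lemma~\ref{braidgraphiso2} gives $G_{Q_{I}}\cong Y(\rho_{I})$ as well; composing the two isomorphisms sends $t'_{i}\mapsto t_{i}$, and no surjection-plus-counting argument is needed. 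If you want to keep your root-system formulation, you would have to prove the analogue of this matching there, namely that the subsystem $\{\alpha : s_{\alpha}\in W'\}$ has component ranks given by the component types of $Q_{I}$ --- which amounts to redoing the braid-graph/Young-subgroup computation in different language, not to citing Theorem~\ref{latticeiso}.
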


This theorem is our second main result, which will be proved in Section 4. 

Note that for quivers of affine type, the cluster group defined as in Definition $\ref{2.7}$ is not always a Coxeter group. The article \cite{FT} shows that, in order for this to happen, additional relations must be added in some cases. 

There are many well-established results for Coxeter presentations and we are interested in whether cluster groups possess properties comparable  to those of Coxeter groups. Theorem $\ref{latticeiso}$ is analogous to the following theorem for Coxeter groups. 

\begin{theorem}\label{CoxeterLIT}\cite[Theorem $5.5(c)$]{Humphreys} Let $(W, S)$ be a Coxeter system. The assignment $I \mapsto W_{I}$ defines a lattice isomorphism between the collection of subsets of $S$ and the collection of parabolic subgroups $W_{I}$ of $W$.
\end{theorem}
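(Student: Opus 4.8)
The plan is to reduce Theorem~\ref{latticeiso} to a transparent statement about trees by exploiting the identification of $G_Q$ with a product of symmetric groups. First I would record the structural input that is already available. Since there are no arrows between the components $Q_1,\dots,Q_r$, relation $(b)(\mathrm{i})$ forces generators of distinct components to commute and no cycle relation mixes components, so $G_Q$ is the internal direct product $G_{Q_1}\times\cdots\times G_{Q_r}$. By \cite{BM} and \cite[Lemma 2.5]{GM} each $G_{Q_k}$ is isomorphic to the Coxeter group of type $A_{n_k}$, that is, to the symmetric group $S_{n_k+1}$, and under this isomorphism every defining generator $t_i$ is sent to a reflection, i.e.\ to a transposition. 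Thus $G_Q\cong S_{n_1+1}\times\cdots\times S_{n_r+1}$ and the generating set $T$ is identified with a set of transpositions, those coming from $Q_k$ acting on the (disjoint) point set of the $k$-th factor.

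Next I would read off the combinatorial shadow of $T$. In the $k$-th factor there are exactly $n_k$ generators and they generate $S_{n_k+1}$; by the classical fact that a family of transpositions generates the full symmetric group on a point set precisely when the graph having those transpositions as edges is connected and spanning, these $n_k$ transpositions must form a spanning tree on the $n_k+1$ points, and distinct generators give distinct edges. Hence the transpositions attached to all of $T$ assemble into a forest $F$, a disjoint union of trees. For any $I\subseteq T$ the subgroup $G_I$ is generated by transpositions and is therefore the Young subgroup $\prod_c S_c$, where $c$ ranges over the connected components of the subgraph $F_I\subseteq F$ spanned by the edges of $I$; moreover a transposition lies in such a Young subgroup if and only if its two endpoints lie in one component.

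The heart of the argument is then a single property of forests: between any two vertices there is at most one path. I would use it twice. For order-reflection, if $i\notin J$ then the edge $t_i$ is itself the unique path in $F$ joining its endpoints, so those endpoints are separated in $F_J$ and $t_i\notin G_J$; thus $t_i\in G_J\iff i\in J$. Consequently $G_I\subseteq G_J$ forces $t_i\in G_J$ for every $i\in I$, whence $I\subseteq J$. As $I\subseteq J\Rightarrow G_I\subseteq G_J$ is immediate and every parabolic subgroup is by definition some $G_I$, the map $\Phi$ is an order-preserving, order-reflecting surjection, hence an order isomorphism onto $\tilde{\mathcal G}$. For the lattice operations, $G_{I\cup J}=\langle G_I,G_J\rangle$ is clear, while $G_I\cap G_J=G_{I\cap J}$ follows again from uniqueness of paths: two vertices joined by both an $I$-path and a $J$-path are joined by the same path, all of whose edges then lie in $I\cap J$. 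This shows $\tilde{\mathcal G}$ is closed under intersection and is a lattice, and that $\Phi$ preserves meets and joins.

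I expect the main obstacle to lie not in any single computation but in the careful passage between the group-theoretic and graph-theoretic pictures: verifying that the subgroup generated by a subset of the chosen reflections is exactly the Young subgroup read off from connected components, and that the chosen reflections genuinely assemble into a forest rather than a graph containing a cycle. Once the spanning-tree fact is secured, the type-$A_n$ hypothesis does the remaining work, since it is precisely the uniqueness of paths in a tree---which fails for other Dynkin types---that yields both order-reflection and the meet identity.
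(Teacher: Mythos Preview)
Your proposal does not address the stated theorem. Theorem~\ref{CoxeterLIT} is the classical Coxeter-group lattice isomorphism, quoted from \cite[Theorem 5.5(c)]{Humphreys} and not proved in the paper at all; what you have actually sketched is a proof of Theorem~\ref{latticeiso}, the cluster-group analogue. Since the paper offers no argument for the former, the only meaningful comparison is with the paper's proof of Theorem~\ref{latticeiso}, which I take to be your real target.

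With that understood, your argument is correct and follows a genuinely different route. The paper builds the tree explicitly: each component $Q_k$ is realised by a triangulation of an $(n_k+3)$-gon, the braid graph $\Gamma_Q$ is constructed, and Sergiescu's presentation (Proposition~\ref{sympresentation}) yields a concrete isomorphism $\pi_Q:G_Q\to Y(\rho)$ sending $t_i$ to the transposition swapping the endpoints of the edge $E_i$. The lattice isomorphism is then assembled by passing through set partitions (Lemmas~\ref{latticeeqiv}--\ref{phi7}) and identifying $G_I$ with $Y(\rho_I)$ via Lemma~\ref{G}. You bypass the triangulation and Sergiescu machinery entirely: you take the \cite{BM}/\cite{GM} isomorphism as a black box, note that generators go to transpositions, and use the counting argument (\,$n_k$ transpositions generating $S_{n_k+1}$ must form a spanning tree\,) to recover the forest abstractly. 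From there your use of uniqueness of paths to get order-reflection and the meet identity $G_I\cap G_J=G_{I\cap J}$ is exactly the content of the paper's Lemmas~\ref{latticeeqiv} and~\ref{phi1}, just phrased group-theoretically rather than via partitions. Your approach is shorter and more conceptual but leans on an external isomorphism result; the paper's is longer but exhibits the isomorphism to the symmetric group concretely. Both pivot on the same tree property, and your closing remark that this is where the type-$A$ hypothesis does its work is the essential insight in either version.
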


Similarly, analogously to our second main result, any parabolic subgroup of a Coxeter group has a Coxeter presentation \cite[Theorem 5.5(a)]{Humphreys}

In Section 1, we will recall and develop some definitions and theorems from which we will build the required foundations of the theory of cluster groups. In particular, we will highlight results from \cite{BM} and \cite{GM} that show that the cluster group associated to any quiver of mutation-Dynkin type is invariant under mutation of the quiver. For the subsequent sections we consider only quivers of mutation-Dynkin type with connected components $Q_{1},..., Q_{r}$ of type $A_{n_{1}},..., A_{n_{r}}$ for some $n_{1},..., n_{r} \in \mathbb{Z}^{+}$. Let $Q$ be such a quiver. The quivers of mutation-Dynkin type $A_{n}$ arise from triangulations of a disk (see \cite{FWZ}) which provides a geometric insight that we need when considering results for cluster presentations of this type. In Section 2, we use this interpretation to associate a graph to $Q$, called the braid graph. This becomes a tool for constructing an isomorphism between the cluster group $G_{Q}$ and a Young subgroup of $\Sigma_{n+r}$. In Section 3, the existence of such an isomorphism is employed to prove the above lattice isomorphism theorem for cluster groups of mutation-Dynkin type $A_{n_{1}}\sqcup...\sqcup A_{n_{r}}$. Finally, in Section 4, it will be shown that any parabolic subgroup $G_{I}$ of $G_{Q}$ has a cluster group presentation associated to $Q_{I}$.

The author would like to thank the referee for their careful reading of the manuscript and their constructive input.
\section{foundational theory of cluster groups}

First, we recall some definitions and theorems that we will need and then use them too prove results for the theory of cluster groups.

Recall that a quiver is an oriented graph $Q = (Q_{0}, Q_{1})$ where $Q_{0}$ is the set of vertices and $Q_{1}$ is the set of arrows. Moreover, a cluster quiver is one which has no loops or $2$-cycles. Specifically, we consider quivers where the arrows are unlabelled and whose vertices are labelled by the set $\lbrace 1,..., n \rbrace$, where $ n \in \mathbb{N}$ equals the number of vertices.

\begin{definition}\label{2.11} Let $Q$ be a cluster quiver  with vertex set $Q_{0}$. The \textbf{mutation} of $Q$ at a vertex $v \in Q_{0}$ is the quiver $\mu_{v}(Q)$ which is obtained by applying the following steps: 
\begin{itemize}
\item[(a)] For each path in $Q$ of length two passing through the vertex $v$ of the form $u \longrightarrow v \longrightarrow w$, add an arrow $u \longrightarrow w$. Note that this takes into account multiplicity of arrows, i.e. if there are $x$ arrows $u \longrightarrow v$ and $y$ arrows $v \longrightarrow w$ then $xy$ arrows are added from $u$ to $w$.
\item[(b)] Reverse each arrow adjacent to $v$.
\item[(c)] Repeatedly delete pairs of arrows forming 2-cycles until there are no 2-cycles.
\end{itemize}
Note that the operation in $(c)$ is well-defined as, no matter which pairs of arrows we choose to delete, the same number of arrows will be deleted in each direction.
\end{definition}

\begin{definition} Two cluster quivers are \textbf{mutation-equivalent} if there exists a finite sequence of mutations transforming one into the other. 
\end{definition}

\begin{definition} Let $\Delta$ be a simply-laced Dynkin diagram on $n$ vertices which are labelled $1,..., n$. Any orientation of $\Delta$ is called a \textbf{cluster quiver of Dynkin type} $\Delta$.  Following \cite{FWZ}, we call a connected Dynkin diagram \textbf{indecomposable} and a disconnected Dynkin diagram \textbf{decomposable}, where the type is determined by specifying each type of the connected (thus indecomposable) components. The decomposable Dynkin diagram that is the disjoint union of the Dynkin diagrams $\Delta_{1}$ and $\Delta_{2}$ is denoted by $\Delta_{1}\sqcup\Delta_{2}$.
\end{definition}

\begin{definition} A cluster quiver is of \textbf{mutation-Dynkin type $\Delta$} if it is mutation-equivalent to a cluster quiver of Dynkin type $\Delta$.  
\end{definition} We remark that any mutation-Dynkin quiver has a unique type \cite[Theorem 1.7]{FZ}.

Given a cluster quiver $Q$, we consider the cluster group $G_{Q}$, as given in Definition $\ref{2.7}$, where the set of defining generators is denoted by $T$ and the set of relations is denoted by $R$. In particular, suppose that $Q$ is a decomposable cluster quiver with connected components $Q_{1},..., Q_{r}$. We consider the cluster presentation associated to  $Q_{i}$ for each $1 \leq i \leq r$. 

\begin{theorem}\label{directproduct} \cite[Theorem 1]{BTMPR} Let $G_{1}$ and $G_{2}$ be groups with group presentations $\langle X_{1} \vert R_{1} \rangle$ and $\langle X_{2} \vert R_{2} \rangle$, respectively. Then $G_{1} \times G_{2}$ has presentation $\langle X_{1}\cup X_{2} \vert R_{1}\cup R_{2} \cup B \rangle$ where $B$ is the set of relations $ \lbrace x_{1}x_{2} = x_{2}x_{1} : x_{1} \in X_{1}, x_{2} \in X_{2} \rbrace$. 
\end{theorem}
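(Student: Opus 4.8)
The plan is to produce mutually inverse group homomorphisms between $G := \langle X_{1}\cup X_{2} \mid R_{1}\cup R_{2}\cup B\rangle$ and $G_{1}\times G_{2}$, using throughout the universal property of group presentations (von Dyck's theorem): a map from the generating set of a presented group to another group extends to a homomorphism exactly when all the defining relations are respected. First I would construct $\phi\colon G\to G_{1}\times G_{2}$ by sending $x_{1}\mapsto(x_{1},e)$ for $x_{1}\in X_{1}$ and $x_{2}\mapsto(e,x_{2})$ for $x_{2}\in X_{2}$ (identifying each generating symbol with its image in $G_{1}$, resp.\ $G_{2}$). The relations of $R_{1}$ hold because they hold in $G_{1}$ while the second coordinate stays trivial; symmetrically for $R_{2}$; and each relation $x_{1}x_{2}=x_{2}x_{1}$ of $B$ holds because $(x_{1},e)(e,x_{2})=(x_{1},x_{2})=(e,x_{2})(x_{1},e)$. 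So $\phi$ is well defined.

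Next I would construct a homomorphism in the opposite direction. Applying von Dyck's theorem to the presentation $\langle X_{i}\mid R_{i}\rangle$ of $G_{i}$, the assignment $x_{i}\mapsto x_{i}\in G$ defines a homomorphism $\psi_{i}\colon G_{i}\to G$ for $i=1,2$, since $R_{i}\subseteq R_{1}\cup R_{2}\cup B$. The relations $B$ say that each element of $\psi_{1}(X_{1})$ commutes with each element of $\psi_{2}(X_{2})$, and a short induction on word length (if $a$ commutes with $b$ and with $c$, then it commutes with $bc$ and with $b^{-1}$) promotes this to $[\psi_{1}(G_{1}),\psi_{2}(G_{2})]=1$. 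It follows that $\psi\colon G_{1}\times G_{2}\to G$, $(g_{1},g_{2})\mapsto\psi_{1}(g_{1})\psi_{2}(g_{2})$, is a homomorphism: expanding $\psi\bigl((g_{1},g_{2})(h_{1},h_{2})\bigr)$ and $\psi(g_{1},g_{2})\,\psi(h_{1},h_{2})$ and cancelling, the claimed equality reduces to $\psi_{1}(h_{1})\psi_{2}(g_{2})=\psi_{2}(g_{2})\psi_{1}(h_{1})$, which we have just established.

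Finally I would check that $\phi$ and $\psi$ are mutually inverse; as both composites are homomorphisms, it suffices to check them on generating sets. On $X_{1}$ one has $\psi(\phi(x_{1}))=\psi(x_{1},e)=\psi_{1}(x_{1})=x_{1}$, and similarly on $X_{2}$, so $\psi\circ\phi=\mathrm{id}_{G}$; and $\phi(\psi(x_{1},e))=\phi(x_{1})=(x_{1},e)$ while $\phi(\psi(e,x_{2}))=(e,x_{2})$, so $\phi\circ\psi=\mathrm{id}_{G_{1}\times G_{2}}$ because the elements $(x_{1},e)$ and $(e,x_{2})$ generate $G_{1}\times G_{2}$. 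Hence $\phi$ is an isomorphism. The one point requiring care is the passage from the commutator relations $B$ among generators to the assertion that the full subgroups $\psi_{1}(G_{1})$ and $\psi_{2}(G_{2})$ of $G$ commute elementwise; the remainder of the argument is a direct application of von Dyck's theorem together with routine verifications on generating sets.
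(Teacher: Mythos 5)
Your proposal is correct and complete. Note, however, that the paper does not prove this statement at all: it is quoted verbatim from the literature (Theorem 1 of Borges-Trenard and P\'erez-Ros\'es, cited as \cite{BTMPR}) and used as a black box to obtain the decomposition $G_{Q} = G_{Q_{1}} \times \dots \times G_{Q_{r}}$ for decomposable quivers. So there is nothing in the paper to compare against except the citation; what you have supplied is the standard self-contained argument, and it is sound: von Dyck's theorem gives the map $\phi$ out of the presented group $G$ because each relation of $R_{1}\cup R_{2}\cup B$ is visibly satisfied by the images $(x_{1},e)$ and $(e,x_{2})$; von Dyck again gives $\psi_{1},\psi_{2}$ into $G$; and the only point of substance, which you correctly isolate, is upgrading the generator-level commutation relations $B$ to elementwise commutation of the subgroups $\psi_{1}(G_{1})$ and $\psi_{2}(G_{2})$ (the easy induction on word length, using that centralizers are subgroups), which is exactly what makes $(g_{1},g_{2})\mapsto \psi_{1}(g_{1})\psi_{2}(g_{2})$ a homomorphism. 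Checking the two composites on generating sets then finishes the proof. This is essentially the textbook proof of the cited result, so your argument is a valid replacement for the citation rather than a divergence from the paper's method.
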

For each $1 \leq i \neq j \leq r$ let $T_{i}$ be the set of generators of $G_{Q_{i}}$ and $R_{i}$ the set of relations and define the set of relations $B_{ij} = \lbrace t_{i}t_{j} = t_{j}t_{i} : t_{i} \in T_{i}, t_{j} \in T_{j} \rbrace$. By Theorem $\ref{directproduct}$, $G_{Q_{1}} \times G_{Q_{2}} \times ... \times G_{Q_{r}} = \langle T_{1}\cup...\cup T_{r} \vert R_{1}\cup ... \cup R_{r} \cup B \rangle$ where $B = \cup_{i=1}^{r}(\cup_{1 \leq j \leq r, j \neq i} B_{ij})$.

However, $T = T_{1} \sqcup ... \sqcup T_{r}$ and as the $Q_{i}$ are the connected components of $Q$, no edge exists between any two vertices lying in different components. Thus $R = R_{1}\cup ... \cup R_{r} \cup B$, meaning $\langle T_{1}\cup...\cup T_{r} \vert R_{1}\cup ... \cup R_{r} \cup B \rangle$ is precisely the cluster presentation of $Q$. This gives the following lemma.

\begin{lemma}\label{directproduct2} If $Q$ is a decomposable cluster quiver with indecomposable components $Q_{1},.., Q_{r}$ then $$G_{Q} = G_{Q_{1}} \times G_{Q_{2}} \times... \times G_{Q_{r}}.$$
\end{lemma}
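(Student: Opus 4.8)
The plan is to deduce this directly from the presentation of a direct product, Theorem \ref{directproduct}, by checking that the cluster relations of a disconnected quiver split exactly into the pieces predicted by that theorem.

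First I would fix notation: write $T_{i}$ for the generating set $\lbrace t_{k} : k \in (Q_{i})_{0}\rbrace$ of $G_{Q_{i}}$ and $R_{i}$ for its set of cluster relations, and for $i \neq j$ set $B_{ij} = \lbrace t_{a}t_{b} = t_{b}t_{a} : t_{a} \in T_{i},\ t_{b} \in T_{j}\rbrace$. Applying Theorem \ref{directproduct} once and then inducting on $r$, one obtains
$$G_{Q_{1}} \times \cdots \times G_{Q_{r}} = \langle T_{1}\cup\cdots\cup T_{r} \mid R_{1}\cup\cdots\cup R_{r}\cup B \rangle, \quad B = \bigcup_{i}\bigcup_{j\neq i} B_{ij},$$
where one only has to observe that re-associating the iterated product enlarges $B$ by the ``obvious'' commutation relations and changes nothing else. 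Next I would compare this with the cluster presentation $\langle T \mid R\rangle$ of $Q$ itself. Since the $Q_{i}$ are the connected components of $Q$, the vertex set is the disjoint union of the $(Q_{i})_{0}$, so $T = T_{1}\sqcup\cdots\sqcup T_{r}$, and no arrow of $Q$ joins a vertex of $Q_{i}$ to a vertex of $Q_{j}$ for $i \neq j$.

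I would then run through the three families of relations in Definition \ref{2.7}. A relation $t_{k}^{2}=e$ lies in $R_{i}$ for the component containing $k$; a type-(b)(i) braid relation between vertices in different components is precisely an element of $B$; and a type-(b)(ii) braid relation or a cycle relation (c) can only involve vertices of a single component, since a single arrow, and more substantially a chordless oriented cycle, is contained in one connected component of $Q$ — hence such a relation lies in some $R_{i}$. Conversely, every relation of $R_{i}$ is a relation of $R$ (the full subquiver on $(Q_{i})_{0}$ is exactly $Q_{i}$), and every element of $B$ is a type-(b)(i) relation of $R$ because the two vertices are not joined by an arrow. Therefore $R = R_{1}\cup\cdots\cup R_{r}\cup B$, the two presentations coincide verbatim, and the identity on generators is the asserted isomorphism.

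The only point needing genuine care — and the main, mild, obstacle — is the verification that the type-(b)(ii) braid relations and the cycle relations of $Q$ cannot straddle two components; this is exactly where one uses that an arrow, and a chordless oriented cycle all of whose arrows are simple, must lie inside a single connected component. The rest is bookkeeping about disjoint unions of generating sets and relation sets.
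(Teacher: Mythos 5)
Your proposal is correct and follows essentially the same route as the paper: apply Theorem \ref{directproduct} (iterated over the $r$ components) and then check that, because no arrow joins distinct components, the cluster relations of $Q$ split exactly as $R = R_{1}\cup\cdots\cup R_{r}\cup B$. Your explicit case check of the relation types (b)(ii) and (c) staying within one component is just a more detailed spelling-out of the paper's observation that no edge exists between components.
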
 It is easy enough to prove the following lemma. 
\begin{lemma}\label{(1)}
If $Q$ and $P$ are isomorphic cluster quivers, then $G_{Q} \cong G_{P}$.
\end{lemma}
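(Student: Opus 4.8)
The plan is to upgrade the given quiver isomorphism into an isomorphism of group presentations by checking that it carries each of the three families of defining relations in Definition \ref{2.7} to relations of the same kind. First I would fix a quiver isomorphism $\varphi\colon Q\to P$: by definition this is a bijection $\varphi_{0}\colon Q_{0}\to P_{0}$ on vertices together with a bijection on arrows compatible with sources and targets, so in particular $\varphi_{0}$ preserves the number of arrows between any ordered pair of vertices. Writing $T=\{t_{i}:i\in Q_{0}\}$ and $T'=\{t'_{j}:j\in P_{0}\}$ for the two generating sets, consider the bijection $T\to T'$ given by $t_{i}\mapsto t'_{\varphi_{0}(i)}$.

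Next I would verify that this bijection sends each defining relation of $G_{Q}$ to a defining relation of $G_{P}$. Relation $(a)$ is immediate as it involves a single generator. For the braid relations $(b)$, the key point is that $\varphi$, being an arrow-preserving bijection, preserves the property ``there is no arrow between $i$ and $j$'' and the property ``$i$ and $j$ are joined by a simple arrow'', so $(b)(\mathrm{i})$ and $(b)(\mathrm{ii})$ for $Q$ correspond exactly to the same relations for $P$. For the cycle relations $(c)$, one observes that $\varphi$ carries a chordless oriented cycle $i_{1}\to i_{2}\to\cdots\to i_{r}\to i_{1}$ with all arrows simple in $Q$ to a chordless oriented cycle $\varphi_{0}(i_{1})\to\cdots\to\varphi_{0}(i_{r})\to\varphi_{0}(i_{1})$ with all arrows simple in $P$, since chordlessness, orientation, and simplicity of arrows are all preserved; hence the cycle relation attached to the first cycle maps to the cycle relation attached to its image. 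The same applies to $\varphi^{-1}$.

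It then follows from the universal property of a group presentation that $t_{i}\mapsto t'_{\varphi_{0}(i)}$ extends to a group homomorphism $G_{Q}\to G_{P}$, and symmetrically $\varphi^{-1}$ induces a homomorphism $G_{P}\to G_{Q}$; since these two maps are mutually inverse on generators, they are mutually inverse homomorphisms, so $G_{Q}\cong G_{P}$. There is no real obstacle here: the only step requiring a moment's attention is confirming that a quiver isomorphism genuinely preserves all the combinatorial data appearing in the relations (absence of arrows, simplicity of an arrow, and chordless oriented cycles), which is exactly the verification carried out above.
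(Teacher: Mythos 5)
Your proof is correct: a quiver isomorphism preserves absence of arrows, simple arrows, and chordless oriented cycles, so the induced bijection on generators matches up the defining relations and the universal property yields mutually inverse homomorphisms. The paper omits the proof of Lemma \ref{(1)} entirely (it is stated as "easy enough to prove"), and your argument is precisely the routine verification intended.
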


In the light of Lemma \ref{(1)}, we note that $G_{Q}$ only relies on $Q$ up to isomorphism and so nothing is lost by restricting to quivers with vertices labelled by the set $\lbrace 1,..., n \rbrace$. The cluster groups associated to more general cluster quivers will be isomorphic to the groups associated with these quivers. 

\begin{remark} As discussed previously, \cite[Lemma 2.5]{GM} shows the cluster group associated to a mutation-Dynkin quiver is isomorphic to the group with the group presentation defined in \cite{BM}. In fact, \cite[Lemma 2.5]{GM} shows that the cycle relations appearing in the cluster group presentations for mutation-Dynkin quivers are equivalent to those appearing in the group presentations defined in \cite{BM}. Moreover, \cite[Theorem 5.4]{BM} shows that the groups arising from these group presentations are invariant under mutation of the quiver. It follows that the cluster groups associated to mutation-Dynkin quivers are invariant under mutation. Alternatively, by a simple extension of the proof of \cite[Proposition 2.9]{GM} it can be shown directly that when $Q$ is a mutation-Dynkin quiver, the corresponding cluster group will be invariant under mutation of $Q$. Thus, we have the following. 
\end{remark}

\begin{lemma}\label{(2)}\cite[Lemma 2.5]{GM}, \cite[Theorem 5.4]{BM}
Let $Q$ be a cluster quiver of mutation-Dynkin type. If $Q$ and $Q'$ are mutation-equivalent then $G_{Q} \cong G_{Q'}$.
\end{lemma}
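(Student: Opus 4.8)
The plan is to reduce the statement to the case of a single mutation and then to write down an explicit isomorphism. If $Q$ and $Q'$ are mutation-equivalent there is a finite sequence $Q = Q^{(0)}, Q^{(1)}, \dots, Q^{(m)} = Q'$ with $Q^{(\ell)} = \mu_{v_\ell}(Q^{(\ell-1)})$ for some vertices $v_\ell$, and by \cite[Theorem 1.7]{FZ} each $Q^{(\ell)}$ is again of mutation-Dynkin type; since isomorphisms of groups compose, it therefore suffices to show $G_Q \cong G_{\mu_v(Q)}$ for an arbitrary vertex $v$ of an arbitrary mutation-Dynkin quiver $Q$.

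Fix such a $v$ and put $Q' = \mu_v(Q)$, writing $\lbrace t_i : i \in Q_0 \rbrace$ for the defining generators of $G_Q$ and $\lbrace t'_i : i \in Q_0 \rbrace$ for those of $G_{Q'}$. Following the constructions in \cite{BM} and \cite{GM}, I would define a homomorphism $\phi\colon G_{Q'} \to G_Q$ on generators by sending $t'_v$ to $t_v$, sending $t'_i$ to $t_i$ whenever $i$ is not joined to $v$ by an arrow in $Q$, and sending $t'_i$ to a conjugate $t_v t_i t_v$ (or to $t_i$, according to the direction of the arrow between $i$ and $v$) otherwise. The crucial point is then to check that $\phi$ carries each of the relations $(a)$, $(b)$, $(c)$ defining $G_{Q'}$ to a relation that already holds in $G_Q$. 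Here one uses that $\mu_v$ only alters arrows incident to $v$ and arrows between neighbours of $v$, so that every defining relation of $G_{Q'}$ lives inside a small subquiver around $v$; running through the finitely many local configurations reduces the verification to short manipulations with the relations of $G_Q$. Finally, since $\mu_v$ is an involution, applying the same recipe to $Q'$ produces a homomorphism $\psi\colon G_Q \to G_{Q'}$, and a direct check on generators gives $\psi\circ\phi = \mathrm{id}$ and $\phi\circ\psi = \mathrm{id}$, so $\phi$ is the desired isomorphism.

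The main obstacle is the invariance of the cycle relations $(c)$: a mutation at $v$ can create or destroy oriented chordless cycles through $v$ and can change which longer cycles are chordless, so one has to treat by cases the local pictures around $v$ (an arrow into or out of $v$, an oriented $3$-cycle $i \to v \to j \to i$, and so on) and confirm each time that the transformed cycle relations follow from those already present --- this is precisely what is carried out in \cite[Lemma 2.5]{GM} and \cite[Theorem 5.4]{BM}. It is worth noting that in the mutation-Dynkin setting the chordless oriented cycles that occur have length $3$, which keeps this case analysis finite, and that a self-contained alternative is the extension of \cite[Proposition 2.9]{GM} mentioned in the preceding remark.
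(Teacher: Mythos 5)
Your proposal is correct and takes essentially the same route as the paper, which likewise gives no independent argument but deduces the lemma from \cite[Lemma 2.5]{GM} together with \cite[Theorem 5.4]{BM} (equivalently, the extension of \cite[Proposition 2.9]{GM} mentioned in the preceding remark); your explicit conjugation map is just a sketch of that cited argument, with the reduction to a single mutation being routine. Two small cautions: the assertion that a direct check gives $\psi\circ\phi=\mathrm{id}$ and $\phi\circ\psi=\mathrm{id}$ depends on fixing the arrow-direction conventions for the two maps compatibly (with the naive ``same recipe'' applied to both quivers the composite instead conjugates exactly the neighbours of $v$ by $t_{v}$, which is not the identity), and the parenthetical claim that chordless oriented cycles in mutation-Dynkin quivers always have length $3$ holds in type $A$ but fails in type $D$, where arbitrarily long chordless oriented cycles occur, so the case analysis you defer to the references is genuinely larger than your remark suggests.
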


By \cite{Humphreys}, each simply-laced Dynkin diagram gives rise to a Weyl group and so to a finite Coxeter group. A finite Coxeter group is given by a simply-laced Dynkin diagram, $\Delta$, in the following way. 

Fix a vertex labelling of $\Delta$ with the set $\lbrace 1,.., n \rbrace$ and consider some orientation, $\overrightarrow{\Delta}$,  of $\Delta$. Let $W_{\Delta}$ be the group with group presentation $<S|R>$ where $S = \lbrace s_{1},..., s_{n} \rbrace$ and $R$ is the set containing the relations $(s_{i}s_{j})^{m(i, j)}$ for all $1 \leq, i, j \leq n$, where $$m(i, j) = \begin{cases} 
      2, & \text{when there is no arrow between } i \text{ and } j \text{ in } \Delta; \\
      3, & \text{when there is an arrow between } i \text{ and } j  \text{ in } \Delta.
   \end{cases}$$ Then $W_{\Delta}$ is a finite Coxeter group \cite[Definition 5.1]{Humphreys}.

Furthermore, as $\Delta$ contains no cycles, $G_{\overrightarrow{\Delta}} = \langle t_{1},..., t_{n} | t_{i}^{2} = (t_{i}t_{j})^{p(i, j)} = e \rangle$ where
\begin{align*}
p(i, j) &= \begin{cases} 
      2, & \text{when there is no arrow between } i \text{ and } j; \\
      3, & \text{when there is an arrow between } i \text{ and } j. 
   \end{cases} \\
   &= m(i, j).
\end{align*}
   
In this case, the cluster group does not depend on the orientation of $\Delta$, so we denote it simply by $G_{\Delta}$. 
   
Thus $$\varphi: G_{\Delta} \longrightarrow W_{\Delta},$$ $$\varphi(t_{i}) := s_{i},$$ defines an isomorphism between $G_{\Delta}$ and $ W_{\Delta}$.

If $Q$ is of mutation-Dynkin type $\Delta$, then $Q$ is mutation-equivalent to some orientation of $\Delta$ (with some fixed labelling of vertices by $1,...,n$). By Lemma~\ref{(2)}, $G_{Q} \cong G_{\Delta} \cong W_{\Delta}$. That is, when $Q$ is a quiver of mutation-Dynkin type, the associated cluster group is isomorphic to the finite Coxeter group of the same type. 

For more general cluster quivers, we consider the following subgroups of the associated cluster group.

\begin{definition} For a subset $I$ of $T$, let $G_{I}$ denote the subgroup of $G_{Q}$ generated by the elements of $I$. A \textbf{parabolic subgroup} of $G_{Q}$ is a subgroup of the form $G_{I}$ for some $I \subseteq T$.
\end{definition}

When $Q$ is a quiver of mutation-Dynkin type $A_{n}$, we will show that there exists a lattice isomorphism between the lattice of subsets of the set of generators of $G_{Q}$ and the lattice of its parabolic subgroups. First, we need to define the \textit{braid graph} of $Q$, which we do in the following section.

\section{the braid graph of a cluster group of mutation-dynkin type $A_{n}$}

Fix a quiver $Q$ of mutation-Dynkin type with connected components $Q_{1},..., Q_{r}$ of types $A_{n_{1}},..., A_{n_{r}}$, respectively, where $n_{1},..., n_{r} \in \mathbb{Z}^{+}$ and $ n = \Sigma_{i=1}^{r} n_{i}$. As previously mentioned, the quivers of mutation-Dynkin type $A_{n}$ arise from triangulations of a polygon \cite{FWZ}. In this section we explain how in the case when $r=1$, we can construct a \textit{braid graph} from a triangulation that gives rise to $Q$. This braid graph will be independent of the choice of triangulation giving rise to $Q$ so we can consider it to be the braid graph of $Q$. This graph is then used to build an isomorphism between $G_{Q}$ and $\Sigma_{n+1}$. This theory will then be extended to the case when $r \geq 1$. 

In a convex polygon, $P$, a \textbf{diagonal} is a line in the interior of $P$ which connects two non-adjacent vertices and only touches the boundary of the polygon at its endpoints. A \textbf{triangulation} of $P$ is a decomposition of the polygon into triangles by a maximal set of non-crossing diagonals. We remark that every triangulation of an $n$-gon has exactly $n-2$ triangles and $n-3$ diagonals. 
 
Every triangulation $\mathcal{T}$ of a convex $(n+3)$-gon gives rise to an indecomposable quiver of mutation-Dynkin type $A_{n}$, denoted by $Q_{\mathcal{T}}$ \cite{CCS}.

\begin{definition}\label{quivertriangulation}\cite[Section 4]{FST} For a triangulation $\mathcal{T}$ of a convex $(n+3)$-gon, $Q_{\mathcal{T}}$ is the quiver whose vertices are in bijection with the diagonals of $\mathcal{T}$. Moreover, there exists an arrow from the vertex $i$ to the vertex $j$ in $Q_{\mathcal{T}}$ if and only if the corresponding diagonals $d_{i}$ and $d_{j}$ bound a common triangle where $d_{j}$ immediately precedes $d_{i}$ in the anticlockwise orientation of the triangle.  
\end{definition} 

\begin{theorem}\label{triangulationquivers} \cite[Lemma 2.1]{CCS} (and also from \cite[Example 6.6]{FST}) For a triangulation $\mathcal{T}$ of a convex $(n+3)$-gon, $Q_{\mathcal{T}}$ is of mutation-Dynkin type $A_{n}$. Conversely, every quiver $Q$ of mutation-Dynkin type $A_{n}$ is of the form $Q_{\mathcal{T}}$ for some triangulation, $\mathcal{T}$, of an $(n+3)$-gon. 
\end{theorem}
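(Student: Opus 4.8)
The final statement in the excerpt is Theorem~\ref{triangulationquivers}: for a triangulation $\mathcal{T}$ of a convex $(n+3)$-gon, the quiver $Q_{\mathcal{T}}$ of Definition~\ref{quivertriangulation} is of mutation-Dynkin type $A_n$, and conversely every mutation-Dynkin quiver of type $A_n$ arises this way.

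\textbf{Plan.} The plan is to prove both directions by induction on $n$, using the fact that triangulations of polygons and flips between them correspond exactly to quiver mutations. For the forward direction, I would first handle the base cases $n=1$ (a square has exactly one triangulation, giving a single-vertex quiver, which is $A_1$) and $n=2$ (a pentagon, whose five triangulations each yield a quiver on two vertices joined by a single arrow, i.e.\ an orientation of $A_2$). For the inductive step, I would pick an ``ear'' of the triangulation $\mathcal{T}$ — a triangle two of whose sides are boundary edges of the $(n+3)$-gon — which exists for any triangulation of a polygon. Removing the third side $d$ of that ear (a diagonal) and the two boundary edges, and contracting, gives a triangulation $\mathcal{T}'$ of an $(n+2)$-gon. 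The key combinatorial observation is that the vertex of $Q_{\mathcal{T}}$ corresponding to $d$ is connected in $Q_{\mathcal{T}}$ only to the vertices of the (at most two) diagonals bounding the triangle adjacent to the ear along $d$, and that deleting it recovers (up to reorientation governed by the anticlockwise rule of Definition~\ref{quivertriangulation}) a quiver mutation-equivalent to $Q_{\mathcal{T}'}$. By the inductive hypothesis $Q_{\mathcal{T}'}$ is mutation-Dynkin of type $A_{n-1}$, and adding back a vertex attached by a single arrow at the end of a string keeps us inside mutation type $A_n$ — one can even arrange, by a sequence of flips of $\mathcal{T}$, to reach the ``fan'' triangulation whose quiver is the linearly-oriented $A_n$, and invoke Lemma~\ref{(2)} together with the flip--mutation correspondence.

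\textbf{The flip--mutation correspondence.} The technical engine here, which I would isolate as a preliminary lemma (or cite from \cite{FST} directly, e.g.\ \cite[Proposition 4.8]{FST}), is: if $\mathcal{T}'$ is obtained from $\mathcal{T}$ by flipping the diagonal $d_k$ (removing $d_k$ from the quadrilateral it bisects and inserting the other diagonal), then $Q_{\mathcal{T}'} = \mu_k(Q_{\mathcal{T}})$. Granting this, the forward direction is immediate once one knows that \emph{some} triangulation of the $(n+3)$-gon has quiver equal to an orientation of the $A_n$ Dynkin diagram — take the fan triangulation at a fixed vertex, whose diagonals $d_1,\dots,d_n$ form a path and whose quiver, by the anticlockwise rule, is the linear orientation $1\to 2\to\cdots\to n$ (or its reverse) — and that any two triangulations of a polygon are connected by a sequence of flips, a classical fact. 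Then $Q_{\mathcal{T}}$ is mutation-equivalent to an orientation of $A_n$, hence of mutation-Dynkin type $A_n$. The converse direction is the same argument run backwards: given a mutation-Dynkin quiver $Q$ of type $A_n$, it is mutation-equivalent to an orientation $\overrightarrow{A_n}$, which is $Q_{\mathcal{T}_0}$ for the fan triangulation $\mathcal{T}_0$; transporting the mutation sequence through the flip--mutation correspondence produces a triangulation $\mathcal{T}$ with $Q_{\mathcal{T}} \cong Q$, and by Lemma~\ref{(1)} we may take $Q_{\mathcal{T}} = Q$.

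\textbf{Main obstacle.} The main obstacle is the bookkeeping in the flip--mutation correspondence: one must check that the arrows of $Q_{\mathcal{T}'}$ produced by the anticlockwise-ordering rule of Definition~\ref{quivertriangulation} agree \emph{on the nose} (after the $2$-cycle cancellations in Definition~\ref{2.11}) with those produced by Fomin--Zelevinsky mutation $\mu_k$, across all local configurations of the quadrilateral being flipped — in particular when one or more sides of that quadrilateral are boundary edges rather than diagonals, so that the corresponding vertices of the quiver are absent. This is a finite case analysis on the combinatorial type of the flipped quadrilateral, and it is the one genuinely computational piece; everything else (existence of ears, connectivity of the flip graph, the structure of the fan quiver) is either standard or a short direct check. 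Since the excerpt explicitly allows citing \cite{CCS} and \cite{FST}, I would in fact state the flip--mutation compatibility as a cited result and devote the proof proper to assembling it with the inductive/fan argument.
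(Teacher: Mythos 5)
The paper does not actually prove this statement: it is imported verbatim from \cite[Lemma 2.1]{CCS} (see also \cite[Example 6.6]{FST}), and the two ingredients you use are precisely the facts the paper records immediately after the theorem, namely the flip--mutation compatibility $Q_{\mathcal{T}'}=\mu_{i}(Q_{\mathcal{T}})$ (cited as \cite[Lemma 2.1]{CCS}) and the connectivity of the flip graph of triangulations (cited from \cite{25}, \cite{26}, \cite{31}). So your fan-triangulation argument is the standard proof underlying the citation, and in outline it is sound: the fan triangulation has quiver the linearly oriented $A_{n}$, flip-connectivity together with flip~$=$~mutation gives the forward direction, and transporting a mutation sequence backwards through flips gives the converse. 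Two points to tighten. First, in the converse you assert that the orientation $\overrightarrow{A_{n}}$ to which $Q$ is mutation-equivalent ``is $Q_{\mathcal{T}_{0}}$ for the fan triangulation''; only the linear orientation arises from the fan, so you should either add that all orientations of $A_{n}$ are mutation-equivalent (sink/source mutations), or note that every orientation of $A_{n}$ is $Q_{\mathcal{T}}$ for a suitable zig-zag (snake) triangulation, and also record that equality $Q_{\mathcal{T}}=Q$ (rather than isomorphism) is a matter of labelling the diagonals by the vertices of $Q$. Second, the ear-induction variant you sketch first is the weak part: the claim that ``adding back a vertex attached by a single arrow at the end of a string keeps us inside mutation type $A_{n}$'' is not justified as stated, since mutations performed in the smaller quiver do not obviously lift compatibly once the extra vertex is present; but you abandon this route in favour of the fan/flip argument, so nothing essential is lost. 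Treating the flip--mutation compatibility as a cited black box, as you propose, is exactly in keeping with how the paper handles the whole statement.
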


A flip along the diagonal $d_{i}$ is defined in the following way. Suppose  $d_{i} = XY$ where $X$ and $Y$ are distinct vertices of $P$. We have that $d_{i}$ is adjacent to two triangles, $XYA$ and $XYB$ where $A$ and $B$ are vertices of $P$, distinct from $X$ and $Y$. A flip along the diagonal $d_{i}$ consists of deleting $d_{i}$ and adding the new diagonal $d'_{i} = AB$ to form a new triangulation  \cite[Definition 3.5]{FST}.

As mentioned in \cite[Proposition 3.8]{FST}, the articles \cite{25}, \cite{26} and  \cite{31} give the following proposition. 
\begin{proposition}\cite{25}, \cite{26}, \cite{31} Any two triangulations of a polygon are connected by a sequence of flips along diagonals.
\end{proposition}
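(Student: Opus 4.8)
The statement is classical; the standard proof, which I would reproduce, proceeds as follows. First observe that the flip operation is an involution: if a flip along the diagonal $d_{i} = XY$, bounded by the triangles $XYA$ and $XYB$, produces the diagonal $d_{i}' = AB$, then $d_{i}'$ is bounded by the triangles $XAB$ and $YAB$, so the flip along $d_{i}'$ restores $d_{i}$. Consequently the relation ``$\mathcal{T}$ and $\mathcal{T}'$ are joined by a finite sequence of flips'' is symmetric; being trivially reflexive and transitive, it is an equivalence relation on the finite set of triangulations of the polygon $P$. It therefore suffices to fix one vertex $v$ of $P$ and show that \emph{every} triangulation of $P$ can be flipped to the \textbf{fan} $F_{v}$ at $v$, namely the triangulation whose $\abs{P}-3$ diagonals are exactly the segments $\{v,w\}$ with $w$ not adjacent to $v$ on the boundary. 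Note that $F_{v}$ is characterised among triangulations by the property that all of its diagonals are incident to $v$, since a triangulation has precisely $\abs{P}-3$ diagonals.

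The claim that every triangulation $\mathcal{T}$ flips to $F_{v}$ I would prove by induction on $m = \abs{P}$, the case $m = 3$ being vacuous. For $m \geq 4$: if $v$ is an ear of $\mathcal{T}$, i.e. no diagonal of $\mathcal{T}$ is incident to $v$, then the two boundary-neighbours $u, u'$ of $v$ are joined by a diagonal $\{u,u'\}$ of $\mathcal{T}$; flipping it produces a triangulation in which some diagonal $\{v,z\}$ is incident to $v$. So we may assume $\mathcal{T}$ has a diagonal $\{v,w\}$ incident to $v$. This diagonal divides $P$ into two convex polygons $P_{1}$ and $P_{2}$, each having $v$ as a vertex, each with at least $3$ and at most $m-1$ vertices (both bounds holding because $v$ and $w$ are not boundary-adjacent), meeting only along the edge $\{v,w\}$. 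The triangulation $\mathcal{T}$ restricts to a triangulation of each $P_{j}$, and any flip carried out inside $P_{j}$ is a legal flip in $P$: it takes place within a quadrilateral contained in $P_{j}$ and never involves $\{v,w\}$, which is a boundary edge of $P_{j}$. By the inductive hypothesis we may flip $\mathcal{T}|_{P_{1}}$ to the fan of $P_{1}$ at $v$ and $\mathcal{T}|_{P_{2}}$ to the fan of $P_{2}$ at $v$; carrying out these flips in $P$ turns $\mathcal{T}$ into a triangulation all of whose diagonals ($\{v,w\}$ together with those inside $P_{1}$ and $P_{2}$) are incident to $v$, that is, into $F_{v}$. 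Finally, given arbitrary triangulations $\mathcal{T}$ and $\mathcal{T}'$, flip $\mathcal{T}$ to $F_{v}$, flip $\mathcal{T}'$ to $F_{v}$, and reverse the latter sequence to obtain a sequence of flips connecting $\mathcal{T}$ to $\mathcal{T}'$.

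There is no serious obstacle here; the points requiring care are (i) verifying that the diagonal flipped in the ``ear'' case genuinely produces a diagonal incident to $v$, and (ii) checking that flips performed within a sub-polygon cut off by $\{v,w\}$ are valid flips of the whole polygon and that the two sub-polygons are strictly smaller — both of which follow immediately from the fact that $v$ and $w$ are not adjacent on the boundary. Since the result is well known — this is the connectivity of the flip graph of polygon triangulations (equivalently, the $1$-skeleton of the associahedron) established in \cite{25}, \cite{26}, \cite{31} — it would also be entirely reasonable simply to cite those sources, as is done above.
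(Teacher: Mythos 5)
Your proof is correct. Note, however, that the paper does not prove this proposition at all: it is stated purely as a citation to Harer, Hatcher and Mosher, whose results concern triangulations of (punctured) surfaces and specialise to the polygon case; the paper only needs the statement as imported background for relating triangulations to mutation-Dynkin quivers of type $A_{n}$. What you supply instead is the standard self-contained combinatorial argument: flips are involutions, so connectivity-by-flips is an equivalence relation, and it suffices to flip any triangulation to the fan $F_{v}$ at a fixed vertex $v$, which you do by induction on the number of vertices, treating the ``ear'' case by one preliminary flip and otherwise splitting along a diagonal $\{v,w\}$ and fanning each sub-polygon at $v$. The induction is sound: the two sub-polygons are strictly smaller convex polygons containing $v$, flips inside a sub-polygon are legal flips of $P$ since $\{v,w\}$ is a boundary edge of the sub-polygon and no triangle crosses it, and a triangulation all of whose $\abs{P}-3$ diagonals meet $v$ is necessarily $F_{v}$. (One small slip in your closing remark: the ear-case verification rests on the triangle at $v$ being $uvu'$, not on non-adjacency of $v$ and $w$, but this does not affect the argument.) In short, your route is a more elementary, hands-on proof of exactly what the paper needs, whereas the paper's route buys generality it does not use by leaning on the surface-theoretic literature.
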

\begin{proposition}\cite[Lemma 2.1]{CCS} Let $\mathcal{T}$ be a triangulation of a convex polygon and let $\mathcal{T'}$ be the triangulation obtained by flipping $\mathcal{T}$ along the diagonal $d_{i}$. Then $Q_{\mathcal{T'}} = \mu_{i}(Q_{\mathcal{T}})$. 
\end{proposition}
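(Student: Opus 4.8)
The plan is a direct local verification, using that both the flip along $d_i$ and the mutation $\mu_i$ change the data only near the vertex $i$. First I would record the locality: any diagonal $d_j$ of $\mathcal{T}$ that does not bound a triangle together with $d_i$ remains a diagonal of $\mathcal{T}'$ bounding the \emph{same} two triangles, so by Definition~\ref{quivertriangulation} the arrows of $Q_{\mathcal{T}}$ that involve neither $i$ nor a pair of neighbours of $i$ are unchanged when passing to $Q_{\mathcal{T}'}$; likewise, by Definition~\ref{2.11}, $\mu_i$ alters only the arrows at $i$ and the arrows between vertices $u, w$ lying on a path $u \to i \to w$, and such $u, w$ are exactly the neighbours of $i$, i.e.\ the diagonals sharing a triangle with $d_i$ (of which there are at most four). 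Hence it suffices to compare $\mu_i(Q_{\mathcal{T}})$ and $Q_{\mathcal{T}'}$ on this bounded local configuration.

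Next I would fix the local picture. Writing $d_i = XY$ and letting $XYA$, $XYB$ be the two triangles of $\mathcal{T}$ containing $d_i$, the points $X, A, Y, B$ occur in this cyclic order around $P$, the flip replaces $XY$ by $AB$, and in $\mathcal{T}'$ the quadrilateral is cut into the triangles $ABX$ and $ABY$. The only diagonals that can interact with $i$ are those among the four sides $XA, AY, YB, BX$; when one of these is a boundary edge the corresponding vertex and all arrows mentioning it are simply absent. Applying the anticlockwise-precedence rule of Definition~\ref{quivertriangulation} to the triangles $XYA, XYB$ I would list the local arrows of $Q_{\mathcal{T}}$ — namely $XA \to i$, $YB \to i$, $i \to AY$, $i \to BX$, together with $AY \to XA$ and $BX \to YB$ coming from $XYA$ and $XYB$ — and then run the three steps of $\mu_i$: step (a) adds $XA \to AY$, $XA \to BX$, $YB \to AY$, $YB \to BX$ from the paths through $i$; step (b) reverses the four arrows at $i$; step (c) cancels the two $2$-cycles $XA \leftrightarrow AY$ and $YB \leftrightarrow BX$. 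What remains is $i \to XA$, $i \to YB$, $AY \to i$, $BX \to i$, $XA \to BX$, $YB \to AY$, and a parallel application of Definition~\ref{quivertriangulation} to the triangles $ABX$ and $ABY$ of $\mathcal{T}'$ produces exactly these same arrows; this is the computational heart of the argument.

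Finally I would dispose of the degenerate cases in which one or more of $XA, AY, YB, BX$ are boundary edges: in each case the affected vertices drop out of the picture, and one checks that the deletions are consistent on both sides — in particular, $\mu_i$ creates and then cancels a $2$-cycle involving $XA$ and $AY$ (resp.\ $YB$ and $BX$) precisely when both diagonals bounding the triangle $XYA$ (resp.\ $XYB$) are present, which is the same condition under which that triangle contributes an arrow $AY \to XA$ (resp.\ $BX \to YB$) to $Q_{\mathcal{T}}$. I expect the only real obstacle to be bookkeeping: keeping the anticlockwise orientation convention consistent between the two triangulations and matching the two $2$-cycle cancellations in $\mu_i$ to the two sides of the quadrilateral that switch which triangle they bound. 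Drawing the quadrilateral and tracking orientations explicitly, rather than arguing abstractly, is the safest way to avoid sign errors here.
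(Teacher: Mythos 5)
Your argument is correct, and in fact it is more than the paper itself offers: the paper does not prove this statement at all, but simply cites it as Lemma~2.1 of Caldero--Chapoton--Schiffler (and Fomin--Shapiro--Thurston), so there is no ``paper proof'' to match against. Your direct local verification is essentially the standard argument behind that citation, and the computational core checks out: with the convention of Definition~\ref{quivertriangulation}, the two triangles $XYA$, $XYB$ contribute the two oriented $3$-cycles $i \to AY \to XA \to i$ and $i \to BX \to YB \to i$ (restricted to those sides that are actually diagonals), mutation at $i$ then leaves exactly $i \to XA$, $i \to YB$, $AY \to i$, $BX \to i$, $XA \to BX$, $YB \to AY$ after the two $2$-cycle cancellations, and the triangles $ABX$, $ABY$ of $\mathcal{T}'$ reproduce precisely this list; your locality reduction (all other triangles, hence all other arrows, are untouched by both the flip and $\mu_i$, and no two neighbours of $i$ share a triangle other than $XYA$, $XYB$) and your treatment of the degenerate cases where some of $XA$, $AY$, $YB$, $BX$ are boundary edges are also sound. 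The one bookkeeping worry you flag -- consistency of the anticlockwise convention -- is harmless even in principle, since reversing all arrows commutes with mutation, so a global change of convention cannot affect the identity $Q_{\mathcal{T}'} = \mu_i(Q_{\mathcal{T}})$. In short: the paper buys the statement by citation; you buy it by an explicit quadrilateral computation, which is exactly how the cited sources prove it.
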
 

We outline here a neat summary of the above which can be found in \cite{HAT}. Let $\mathbb{T}$ be the set of all triangulations of the $(n+3)$-gon $P$ and $\mathcal{M}_{n}$ be the set of quivers of mutation-Dynkin type $A_{n}$. Define a function $$\gamma: \mathbb{T} \longrightarrow \mathcal{M}_{n},$$ $$\gamma: \mathcal{T} \longmapsto Q_{\mathcal{T}}.$$

This map is surjective as every indecomposable quiver $Q$ of mutation-Dynkin type $A_{n}$ arises from a triangulation of an $(n+3)$-gon. 

Define the following equivalence relation on $\mathbb{T}$:  \begin{center}
$\mathcal{T} \sim \mathcal{T'}$ if and only if $\mathcal{T'}$ can be obtained from $\mathcal{T}$ by a clockwise rotation of $P$.
\end{center} The map $\gamma$ induces a surjective map $\tilde{\gamma}: (\mathbb{T}, \sim) \longrightarrow \mathcal{M}_{n}$. 

\begin{theorem}\label{braidgraphunchanged} \cite[Theorem 3.5]{HAT} For $n \geq 2$, the map $\tilde{\gamma}: (\mathbb{T}, \sim) \longrightarrow \mathcal{M}_{n}$ is bijective.
\end{theorem}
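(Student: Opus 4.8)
The statement to prove is Theorem~\ref{braidgraphunchanged}: for $n \geq 2$, the induced map $\tilde{\gamma}\colon (\mathbb{T}, \sim) \longrightarrow \mathcal{M}_n$ is bijective.

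\textbf{Overall strategy.} Surjectivity is already established (it is noted just before the theorem statement that $\gamma$ is surjective because every indecomposable mutation-Dynkin quiver of type $A_n$ arises from a triangulation, hence so is $\tilde{\gamma}$). So the whole content is injectivity: if $Q_{\mathcal{T}} \cong Q_{\mathcal{T}'}$ as quivers, then $\mathcal{T}'$ is obtained from $\mathcal{T}$ by some clockwise rotation of the polygon $P$. The plan is to count both sides, or better, to reconstruct the rotation-class of a triangulation from its quiver via an explicit combinatorial invariant, and then to verify that no two genuinely different rotation classes can give isomorphic quivers.

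\textbf{Key steps.} First I would set up the boundary data: a triangulation $\mathcal{T}$ of a convex $(n+3)$-gon has $n+1$ triangles and each triangle has between zero and two sides on the boundary of $P$. The multiset of ``ear'' triangles (those with two boundary sides) and, more generally, the way internal triangles are arranged, is reflected in the shape of $Q_{\mathcal{T}}$: a diagonal $d_i$ bounding two triangles each having a boundary edge corresponds to a source or sink of degree determined by local geometry, and the chordless oriented 3-cycles of $Q_{\mathcal{T}}$ are exactly the internal triangles of $\mathcal{T}$ (triangles with no boundary side). This dictionary lets me argue that the ``combinatorial type'' of $\mathcal{T}$ up to rotation --- equivalently the planar dual tree of $\mathcal{T}$ together with a cyclic embedding --- is determined by $Q_{\mathcal{T}}$. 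Second, I would observe that a rotation of $P$ by one step induces an isomorphism $Q_{\mathcal{T}} \cong Q_{\rho(\mathcal{T})}$ (indeed the labelled quivers are literally identified once one transports the diagonal labels along the rotation), so $\sim$-equivalent triangulations give isomorphic quivers; this shows $\tilde{\gamma}$ is well defined, which is implicit in the statement but worth stating. Third, for injectivity proper, suppose $Q_{\mathcal{T}} \cong Q_{\mathcal{T}'}$ via a quiver isomorphism $\sigma$. I would reconstruct $\mathcal{T}$ from $Q_{\mathcal{T}}$ by an induction on $n$: identify a vertex $i$ of $Q_{\mathcal{T}}$ that must correspond to a diagonal cutting off an ear (such a vertex exists and is recognisable from the quiver --- e.g. a vertex lying in at most one 3-cycle with a controlled neighbourhood), remove it, and apply the inductive hypothesis to the smaller polygon. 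Matching up the ears of $\mathcal{T}$ and $\mathcal{T}'$ under $\sigma$ forces, after a suitable rotation of $P$, that the two triangulations coincide. Alternatively, and perhaps more cleanly, one can count: the number of triangulations of an $(n+3)$-gon is the Catalan number $C_{n+1}$, the cyclic group of rotations of order $n+3$ acts on $\mathbb{T}$, and a Burnside/orbit-counting argument gives $|\mathbb{T}/\!\sim|$; if this equals $|\mathcal{M}_n|$ (which can be computed independently, e.g. from the literature on counting mutation classes of type $A_n$), then surjectivity of $\tilde{\gamma}$ between finite sets of equal size yields bijectivity. I would likely present the direct reconstruction argument, since the counting approach outsources a nontrivial enumeration of $\mathcal{M}_n$.

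\textbf{Main obstacle.} The delicate point is that the rotation action of $\mathbb{Z}/(n+3)$ on $\mathbb{T}$ is \emph{not} free --- rotationally symmetric triangulations have smaller orbits --- so ``isomorphic quivers'' must correspond to ``same orbit'' rather than to ``differ by a \emph{unique} rotation,'' and the reconstruction argument has to be robust to these symmetries. In the counting approach, this is exactly what makes a naive division $C_{n+1}/(n+3)$ wrong and forces the Burnside computation; in the reconstruction approach, one must be careful that the chosen ``canonical ear'' is selected by a genuinely quiver-intrinsic rule so that $\sigma$ is compatible with it. I would handle this by making the inductive invariant the pair (planar tree dual to $\mathcal{T}$, cyclic order of its leaves), showing this pair is an isomorphism invariant of $Q_{\mathcal{T}}$ and that it determines $\mathcal{T}$ up to rotation; the symmetric cases then cause no trouble because the invariant already records only the rotation class. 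This also uses $n \geq 2$ in an essential way: for $n \leq 1$ the quiver has too few vertices to pin down the cyclic boundary structure, which is why the hypothesis appears.
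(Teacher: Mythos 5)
There is no proof in the paper to compare against: Theorem~\ref{braidgraphunchanged} is quoted from \cite{HAT} (Torkildsen, Theorem 3.5) and used as a black box, so your argument has to stand on its own. Your framing is sound (surjectivity from the preceding discussion, well-definedness because a rotation of $P$ induces an isomorphism $Q_{\mathcal{T}}\cong Q_{\rho(\mathcal{T})}$, injectivity as the real content), and the dictionary ``chordless oriented $3$-cycles $=$ internal triangles'' is correct. The genuine gap is in the invariant you propose to carry the injectivity argument: the pair (planar dual tree of $\mathcal{T}$, cyclic order of its leaves) does \emph{not} determine $\mathcal{T}$ up to rotation. Already for the hexagon ($n=3$): the fan triangulation (all diagonals at one vertex) and a zigzag triangulation both have dual tree a path on four vertices, with no branch vertices and hence no nontrivial leaf or embedding data, yet they lie in different rotation classes --- indeed their quivers are not even isomorphic (one is the linearly oriented path $1\to 2\to 3$, the other has the middle vertex a sink or a source). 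So even granting that your invariant is computable from the quiver, it cannot separate rotation classes, and the implication ``same quiver $\Rightarrow$ same invariant $\Rightarrow$ same rotation class'' breaks at the second arrow. What actually distinguishes these classes is the orientation of the arrows, equivalently the way boundary edges of the polygon interleave with the diagonals around each triangle (the ribbon structure of the dual tree with boundary edges attached); this is precisely the information the unoriented tree forgets --- which is also why the paper can use the braid graph for the symmetric-group presentation: it is a strictly coarser invariant than the quiver. Any reconstruction proof must therefore work with the arrow orientations themselves, not with the underlying tree.

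Your two fallback routes are also incomplete as sketched. In the ear-induction, the inductive hypothesis only gives that the two cut-down triangulations of the $(n+2)$-gon differ by a rotation of the \emph{smaller} polygon, and such a rotation generally does not extend to a rotation of the original $(n+3)$-gon; one needs a stronger inductive statement that reconstructs the triangulation together with enough boundary data (this bookkeeping is the substance of Torkildsen's proof), and the ``recognisable ear vertex'' must be specified and shown compatible with an arbitrary quiver isomorphism $\sigma$. The Burnside comparison is essentially circular here: the published enumeration of quivers in the mutation class of type $A_n$ rests on (or is proved alongside) exactly this bijection, so $\lvert\mathcal{M}_n\rvert$ is not available ``independently'' without redoing the work. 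As it stands, then, the proposal does not yet establish injectivity.
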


Suppose now that $Q$ is a quiver of mutation-Dynkin type with connected components $Q_{1},..., Q_{r}$ of types $A_{n_{1}},..., A_{n_{r}}$, respectively, where $n_{1},..., n_{r} \in \mathbb{Z}^{+}$ and $ n = \Sigma_{i=1}^{r} n_{i}$ and $r \geq 1$. From the above, it follows that $Q$ arises from a triangulation of the disjoint union of $P_{1},..., P_{r}$ where $P_{i}$ is a convex $(n_{i}+3)$-gon for each $1 \leq i \leq r$. Moreover, any triangulation of this disjoint union of polygons admits a quiver of mutation-Dynkin type $A_{n_{1}}\sqcup...\sqcup A_{n_{r}}$.

\begin{definition}\cite[Definition 3.1]{GM} Let $\mathcal{T}$ be a triangulation of a convex $(n+3)$-gon, $P$. The \textbf{braid graph of $\mathcal{T}$} is the graph $\Gamma_{\mathcal{T}} = (V_{\mathcal{T}}, E_{\mathcal{T}})$ where $V_{\mathcal{T}}$ are the vertices of $V$ and $E_{\mathcal{T}}$ are the edges, defined in the following way. The vertices $V_{\mathcal{T}}$ are in bijection with the triangles of $\mathcal{T}$ in $P$ and there exists an edge between two vertices if and only if the corresponding triangles share a common diagonal in $\mathcal{T}$. 
\end{definition}

For each $\mathcal{T}, \mathcal{T'} \in \mathbb{T}$ with $\mathcal{T} \sim \mathcal{T'}$, it is clear that $\Gamma_{\mathcal{T}}$ is isomorphic to $\Gamma_{\mathcal{T'}}$. 

For the moment, we restrict to the case when $r=1$. Let $\mathcal{T}$ be a triangulation giving rise to $Q$. So $\Gamma_{\mathcal{T}}$ denotes the braid graph associated to $\mathcal{T}$. In fact, all triangulations giving rise to $Q$ will have the same braid graph. Thus it makes sense to refer to $\Gamma_{\mathcal{T}}$ as the braid graph of $Q$ and we will denote this graph by $\Gamma_{Q}$.  

Moreover, $\Gamma_{Q}$ is a connected tree on $n+1$ vertices and, as each triangle can be bounded by between 1 and 3 diagonals, the valancy of each vertex is equal to 1, 2 or 3. 

\begin{example} Let $Q$ be the quiver 
\begin{center}
\begin{tikzpicture}
\node (v0) at (0:0)[text width=0.5cm] {$1$};
\node (v1) at (0:2)[text width=0.5cm] {$2$};
\node (v6) at (30:2) {};
\node (v4) at (0:4)[text width=0.5cm] {$3$};
\node (v5) at (0:6)[text width=0.5cm] {$4$};

\draw[<-] (v0) -- (v1);
\draw[->] (v4) -- (v1);
\draw[<-] (v4) .. controls (v6) .. (v0);
\draw[<-] (v4) -- (v5);
\end{tikzpicture}
\end{center} Consider the following triangulation, $\mathcal{T}$:
\begin{center}
\begin{tikzpicture} \filldraw (-3,0) node[anchor=east]  {};
   \newdimen\R
   \R=2cm
   \draw (0:\R)
      \foreach \x in {51.4,102.8,154.2,205.6, 257, 308.4, 359.8, 360} {  -- (\x:\R) }
     \foreach \x in {102.8, 257, 359.8, 360} {  -- (\x:\R) }
     \foreach \x in {51.4, 102.8, 205.6, 257, 308.4, 359.8, 360} {  --  (\x:\R)};
\end{tikzpicture}
\end{center} From Definition $\ref{quivertriangulation}$, we have that  $Q_{\mathcal{T}} = Q$. That is,  $\mathcal{T}$ is a triangulation giving rise to $Q$. From this triangulation, we obtain the following braid graph of $Q$. 

\begin{center}
  \begin{tikzpicture}[scale=1]
    \filldraw (-1,0) node[anchor=east]  {$\Gamma_{Q}$:};
    \foreach \x in {0,1,2}
    \filldraw[black][xshift=\x cm] (\x cm,0) circle (.05cm);
    \filldraw[black][xshift= 4cm] (30: 17 mm) circle (.05cm);
    \filldraw[black][xshift= 4cm] (-30: 17 mm) circle (.05cm);
    \foreach \y in {0, 1}
    \draw[xshift=\y cm] (\y cm,0) -- +(2 cm,0);
    \draw[xshift=4 cm] (30: 0) -- (30: 17 mm);
    \draw[xshift=4 cm] (-30: 0) -- (-30: 17 mm);
  \end{tikzpicture}
\end{center}
\end{example}
Let $\Gamma$ be a planar tree. We denote by $\Sigma_{\Gamma}$ the symmetric group on the vertices of $\Gamma$. That is, the group of permutations of the set of vertices of $\Gamma$. 
\begin{proposition}\label{sympresentation}\cite[Proposition 3.4]{Ser} For a planar tree $\Gamma$, the group $\Sigma_{\Gamma}$ is generated by the set $X_{\Gamma} = \lbrace \sigma : \sigma \text{ is an edge of } \Gamma \rbrace$ subject to the relations:
\begin{itemize}
\item[(1)] $\sigma^{2} = e$ for all $\sigma \in X_{\Gamma}$. 
\item[(2)] If $\sigma_{1}, \sigma_{2} \in X_{\Gamma}$ are disjoint then $$\sigma_{1}\sigma_{2} = \sigma_{2}\sigma_{1}. $$
\item[(3)] If $\sigma_{1}, \sigma_{2} \in X_{\Gamma}$ have one common vertex then $$\sigma_{1}\sigma_{2}\sigma_{1} = \sigma_{2}\sigma_{1}\sigma_{2}. $$
\item[(4)]  If $\sigma_{1}, \sigma_{2}, \sigma_{3} \in X_{\Gamma}$ have a single vertex in common and lie in clockwise order $$\sigma_{1}\sigma_{2}\sigma_{3}\sigma_{1} = \sigma_{2}\sigma_{3}\sigma_{1}\sigma_{2} = \sigma_{3}\sigma_{1}\sigma_{2}\sigma_{3}.$$
\end{itemize} 
\end{proposition}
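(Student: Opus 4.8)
The plan is to show that the natural map from the abstractly presented group onto $\Sigma_{\Gamma}$ is an isomorphism by a counting argument based on removing a leaf. Write $G$ for the group defined by the presentation in the statement, let $m$ be the number of vertices of $\Gamma$, and let $\pi\colon G\to\Sigma_{\Gamma}$ send each edge to the transposition of its two endpoints. One first checks that relations $(1)$--$(4)$ hold for these transpositions: $(1)$--$(3)$ are the familiar identities for transpositions, and $(4)$ is the short computation that three transpositions through a common vertex $v$ satisfy $\sigma_{1}\sigma_{2}\sigma_{3}\sigma_{1}=\sigma_{2}\sigma_{3}\sigma_{1}\sigma_{2}=\sigma_{3}\sigma_{1}\sigma_{2}\sigma_{3}$, each side being the same $3$-cycle on the three far endpoints (so $(4)$ is valid irrespective of the cyclic order; the planar structure only selects which triple of edges the relation is imposed on). Hence $\pi$ is a well-defined homomorphism, and it is surjective because the transpositions attached to the edges of any connected graph generate the full symmetric group on its vertices. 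Therefore $|G|\ge m!$, and it remains to prove $|G|\le m!$.

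I would prove $|G|\le m!$ by induction on $m$, the cases $m\le 2$ being immediate. For the inductive step, pick a leaf $\ell$ of $\Gamma$ with unique incident edge $\sigma_{0}=\{\ell,v\}$, and let $\Gamma'$ be the planar tree on $m-1$ vertices obtained by deleting $\ell$ and $\sigma_{0}$ (with the induced embedding). Every relation of the presentation associated to $\Gamma'$ involves only edges of $\Gamma'$, and as such is among the relations $(1)$--$(4)$ of $G$ (for instance, if $v$ has valency $3$ in $\Gamma$ then deleting $\sigma_{0}$ simply replaces the relation $(4)$ at $v$ by the braid relation $(3)$ for the surviving pair of edges, which is still imposed in $G$). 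Consequently the subgroup $H\le G$ generated by the edges of $\Gamma'$ is a quotient of the group defined by the presentation of $\Gamma'$, which by the inductive hypothesis is $\Sigma_{\Gamma'}$; hence $|H|\le (m-1)!$, and the proof reduces to showing $[G:H]\le m$.

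For the index bound I would produce an explicit transversal of at most $m$ right cosets and show it is stable under the generators. For each vertex $u$, let $g_{u}\in G$ be the element obtained by conjugating $\sigma_{0}$ successively by the edges along the unique path in $\Gamma$ from $\ell$ to $u$; thus $g_{\ell}=e$, the word $g_{u}$ contains exactly one occurrence of $\sigma_{0}$, and $\pi(g_{u})$ is the transposition $(\ell\;u)$. One then checks, for every vertex $u$ and every edge $\sigma\in X_{\Gamma}$, the identity $H g_{u}\sigma = H g_{\sigma(u)}$, where $\sigma(u)$ denotes the image of $u$ under the transposition $\sigma$. Since $G$ is generated by $X_{\Gamma}$ and $e=g_{\ell}$, this gives $G=\bigcup_{u}H g_{u}$, hence $[G:H]\le m$; combined with $|H|\le (m-1)!$ this yields $|G|\le m!$, so $\pi$ is an isomorphism and the proof is complete.

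The identity $H g_{u}\sigma=H g_{\sigma(u)}$ amounts to showing that the word $g_{u}\,\sigma\,g_{\sigma(u)}^{-1}$ — which a priori contains two or three letters equal to $\sigma_{0}$ — can be rewritten, using $(1)$--$(4)$, as a word in the edges of $\Gamma'$ alone. This is the main obstacle. When $\sigma$ is disjoint from $\sigma_{0}$ it is routine, moving $\sigma_{0}$ across by relation $(2)$; the delicate cases are those in which $\sigma$ is incident to $v$, where one must slide the occurrences of $\sigma_{0}$ together past the other edges at $v$ and cancel them via $(1)$. Here relation $(3)$ is required when $v$ has valency $2$, and relation $(4)$ is required when $v$ has valency $3$ — indeed, without $(4)$ the subgroup generated by the three edges at $v$ would be the infinite affine Coxeter group of type $\tilde{A}_{2}$, so no such rewriting could exist. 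Carrying out these finitely many word manipulations is the computational core of the argument.
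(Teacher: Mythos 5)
You are attempting strictly more than the paper does: the paper does not prove this proposition but quotes it from Sergiescu (Proposition 3.4 there, with Remark 3.4 covering the symmetric-group version). Your skeleton is a legitimate route: the verification that relations $(1)$--$(4)$ hold for the edge-transpositions, the surjectivity of $\pi$, the observation that the subgroup $H$ generated by the edges of $\Gamma'$ is a quotient of the presented group of $\Gamma'$ (so $|H|\le (m-1)!$ by induction), and the reduction to $[G:H]\le m$ via the transversal $g_{u}$ are all sound, and the cases where $u$ is an endpoint of $\sigma$ are indeed trivial.

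The gap is the coset identity $Hg_{u}\sigma=Hg_{\sigma(u)}$ itself: you defer it to ``finitely many word manipulations'', but this is essentially the entire content of the proposition, and your description of where the difficulty sits is wrong. You assert that the case ``$\sigma$ disjoint from $\sigma_{0}$'' is routine via relation $(2)$ and that only edges incident to $v$ are delicate. Take $\Gamma$ to be the path $\ell - v - p_{2} - p_{3}$ with one extra edge $\sigma=\{p_{2},b\}$, and $u=p_{3}$: here $\sigma$ is disjoint from $\sigma_{0}$, yet $g_{u}\sigma g_{u}= a_{3}a_{2}\sigma_{0}a_{2}a_{3}\,\sigma\,a_{3}a_{2}\sigma_{0}a_{2}a_{3}$ cannot be pushed into $H$ by sliding $\sigma_{0}$ around; what is needed is the identity $a_{2}(a_{3}\sigma a_{3})a_{2}=a_{3}\sigma a_{3}$, which genuinely requires relation $(4)$ at the interior vertex $p_{2}$ (with $(1)$--$(3)$ alone the three edges at $p_{2}$ generate the affine Coxeter group of type $\tilde{A}_{2}$, where $a_{2}$ does not commute with $a_{3}\sigma a_{3}$, as one sees from the root system). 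In general, relation $(4)$ must be invoked at every valency-$3$ vertex along the path from $\ell$ to $u$ at which an off-path edge attaches, not just at $v$, and the manipulations are not a fixed finite list but must be organised uniformly over arbitrarily long paths --- for instance by induction on the distance from $\ell$ to $u$, using that the edges of any proper subtree generate a quotient of that subtree's presented group (so, by strong induction, identities valid in the corresponding smaller symmetric groups, such as the $\Sigma_{4}$ identity above for a three-edge star, may be imported). None of this is carried out, and the one heuristic you do give would mislead you in exactly the cases that matter; so as written the bound $[G:H]\le m$, and hence the proposition, is unproven. The strategy can be completed along the lines just indicated, but that completion is the actual proof.
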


An additional relation is given in \cite{Ser} for when $\Gamma$ is not a tree. However, as the braid graphs for the quivers we are considering will always be trees, we only require the above four relations. 

\begin{remark} Given the relations $(1)-(3)$, \cite[Lemma 2.5]{GM} shows the relations given in $(4)$ are equivalent to the cycle relation given in the group presentations defined in \cite{BM}. 
\end{remark}

In \cite{Ser}, a version of Proposition $\ref{sympresentation}$ is given for the braid group, in which an edge $\sigma$ is interpreted as a braid which twists the strands corresponding to the endpoints of the edge. In the context of the symmetric group, we view $\sigma$ as the transposition which interchanges the endpoints of the corresponding edge. As noted in \cite[Remark 3.4]{Ser}, the proof that the statement is true for the symmetric group is similar to the proof given for \cite[Proposition 3.4]{Ser} for the braid group.

Choose any labelling of the vertices of $\Gamma_{Q}$ by the set $\lbrace 1, 2, ..., n+1 \rbrace$. There is a bijection between the vertex set of $Q$ (which is in bijection with the diagonals of $\mathcal{T}$) and the edges of $\Gamma_{Q}$ \cite[Definition 3.1]{GM}. So we have a bijection between the vertex set of $Q$ and the set of edges of $\Gamma_{Q}$. We label the edge corresponding to the vertex $i$ under this bijection by $E_{i}$. 

\begin{lemma}\label{braidgraphiso1} Suppose $E_{i}$ has endpoints $x_{i}, y_{i} \in \lbrace 1, 2, ..., n+1 \rbrace$, $x_{i} \neq y_{i}$. Then there exists an isomorphism \begin{align*}
&\pi_{Q}: G_{Q} \longrightarrow \Sigma_{n+1}, \\
&\pi_{Q}: t_{i} \longmapsto (x_{i}, y_{i}).
\end{align*}
\end{lemma}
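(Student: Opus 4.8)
The plan is to exhibit the map $\pi_Q$ as the composite of the known isomorphism $\varphi^{-1}$ (or rather a direct comparison of presentations) and to verify it using the presentation of $\Sigma_\Gamma$ from Proposition~\ref{sympresentation}. Concretely, recall that $G_Q$ is defined by the cluster presentation $\langle T \mid R\rangle$ of Definition~\ref{2.7}, and that $\Sigma_{n+1} = \Sigma_{\Gamma_Q}$ has, by Proposition~\ref{sympresentation}, a presentation on the generating set $X_{\Gamma_Q} = \{\sigma_E : E \text{ an edge of } \Gamma_Q\}$ subject to relations $(1)$--$(4)$. Using the bijection $i \mapsto E_i$ between the vertices of $Q$ and the edges of $\Gamma_Q$, define $\widehat{\pi}\colon t_i \mapsto \sigma_{E_i} = (x_i,y_i)$ on generators. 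First I would check that $\widehat{\pi}$ is a well-defined homomorphism $G_Q \to \Sigma_{n+1}$, i.e.\ that every relation in $R$ is sent to a relation holding in $\Sigma_{\Gamma_Q}$; then I would check that the inverse assignment $\sigma_{E_i} \mapsto t_i$ respects relations $(1)$--$(4)$, which together shows $\widehat\pi$ is an isomorphism.

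The first direction breaks into the three families of cluster relations. Relation $(a)$, $t_i^2 = e$, maps to $\sigma_{E_i}^2 = e$, which is relation $(1)$. For the braid relations $(b)$, the key geometric fact — which I expect is established just before this lemma or is immediate from the construction of $\Gamma_{\mathcal{T}}$ from $\mathcal{T}$ — is the dictionary between arrows of $Q_{\mathcal{T}}$ and incidences of edges of $\Gamma_{\mathcal{T}}$: two diagonals $d_i, d_j$ bound a common triangle (equivalently $i,j$ are joined by a simple arrow in $Q$) if and only if the edges $E_i, E_j$ of $\Gamma_Q$ share exactly one vertex (namely the vertex corresponding to that common triangle); and $d_i, d_j$ bound no common triangle (no arrow between $i$ and $j$) if and only if $E_i, E_j$ are disjoint. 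Hence $(b)(i)$ maps to relation $(2)$ and $(b)(ii)$ maps to relation $(3)$. For the cycle relations $(c)$: a chordless oriented $r$-cycle in $Q_{\mathcal{T}}$ with all arrows simple corresponds to $r$ diagonals $d_{i_1},\dots,d_{i_r}$ that all meet at a single interior vertex of $P$ and are cyclically consecutive there, whence the edges $E_{i_1},\dots,E_{i_r}$ of $\Gamma_Q$ all share a common vertex and lie in clockwise (or anticlockwise) cyclic order around it; the cycle relations of Definition~\ref{2.7}(c) are then precisely the relations $(4)$ of Proposition~\ref{sympresentation} (for $r=3$ directly, and for larger $r$ via the observation recorded in the remark after Proposition~\ref{sympresentation} that, given $(1)$--$(3)$, the general cycle relation follows; alternatively one cites \cite[Lemma~2.5]{GM}). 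This shows $\widehat\pi$ is a homomorphism.

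For the reverse direction I would run the same dictionary backwards: every edge of $\Gamma_Q$ is some $E_i$, so $X_{\Gamma_Q}$ is the image of $T$; and each of relations $(1)$--$(4)$ of $\Sigma_{\Gamma_Q}$, read under $\sigma_{E_i}\mapsto t_i$, becomes respectively relation $(a)$, $(b)(i)$, $(b)(ii)$, and a cycle relation $(c)$ of $G_Q$ — here one uses that a set of edges of $\Gamma_Q$ sharing a common vertex and in clockwise cyclic order corresponds to a chordless oriented cycle in $Q$ with simple arrows, which is again the geometric correspondence between triangulations and their braid graphs. Therefore the assignment $\sigma_{E_i}\mapsto t_i$ defines a homomorphism $\Sigma_{n+1}\to G_Q$ inverse to $\widehat\pi$ on generators, so $\widehat\pi$ is an isomorphism; setting $\pi_Q := \widehat\pi$ completes the proof. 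One should also note, for surjectivity of $\pi_Q$ onto all of $\Sigma_{n+1}$, that $\Gamma_Q$ being a connected tree on $n+1$ vertices means its edge-transpositions generate the full symmetric group $\Sigma_{n+1}$ — this is what identifies $\Sigma_{\Gamma_Q}$ with $\Sigma_{n+1}$ in the first place.

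The main obstacle is the precise statement and proof of the geometric dictionary translating the local combinatorics of the cluster quiver $Q_{\mathcal{T}}$ (no arrow / simple arrow / chordless oriented cycle with simple arrows) into the local combinatorics of incidences of edges in the braid graph $\Gamma_{\mathcal{T}}$ (disjoint / one common vertex / common vertex with a prescribed cyclic order). Everything else is formal manipulation of presentations, but this translation is where the work lies; I would treat it carefully, possibly as a separate lemma, paying particular attention to matching the orientation conventions in Definition~\ref{quivertriangulation} (anticlockwise around triangles) with the clockwise-order convention in Proposition~\ref{sympresentation}(4), and to the fact — needed for the cycle relations — that the diagonals of a chordless oriented cycle in $Q$ are exactly those emanating from a single interior vertex of the polygon, listed in cyclic order.
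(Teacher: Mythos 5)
Your overall strategy---identify $\Sigma_{n+1}$ with $\Sigma_{\Gamma_{Q}}$ via Proposition~\ref{sympresentation} and match the cluster relations of Definition~\ref{2.7} with relations $(1)$--$(4)$ under the vertex--edge bijection $i \mapsto E_{i}$---is exactly the paper's proof, which simply asserts this correspondence; your two-way check of relations, and your attention to the clockwise/anticlockwise conventions, add detail the paper omits. However, the geometric dictionary you single out as the crux is misstated for the cycle relations: a convex polygon has no interior vertices, and the diagonals forming a chordless oriented cycle of $Q_{\mathcal{T}}$ do not all meet at a common point. In type $A_{n}$ a chordless oriented cycle with simple arrows is necessarily a $3$-cycle, and it arises precisely from an internal triangle of $\mathcal{T}$, i.e.\ a triangle all three of whose sides are diagonals $d_{i_{1}}, d_{i_{2}}, d_{i_{3}}$; the common vertex shared by $E_{i_{1}}, E_{i_{2}}, E_{i_{3}}$ in $\Gamma_{Q}$ is the braid-graph vertex labelled by that triangle, while the diagonals themselves pairwise meet at three distinct boundary vertices of the polygon. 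Consequently the case $r>3$ you discuss never occurs: at most three edges of $\Gamma_{Q}$ can share a vertex, matching the fact that relation $(4)$ of Proposition~\ref{sympresentation} involves exactly three edges, so the appeal to deriving longer cycle relations from $(1)$--$(3)$ (or from the remark following that proposition) is unnecessary. With the dictionary corrected in this way (no arrow between $i$ and $j$ iff $d_{i}, d_{j}$ bound no common triangle iff $E_{i}, E_{j}$ are disjoint; a simple arrow iff they bound exactly one common triangle iff $E_{i}, E_{j}$ share exactly one vertex; an oriented $3$-cycle iff the three edges share the vertex given by the corresponding internal triangle, with the cyclic order coming from the orientation conventions), the remainder of your argument goes through and is consistent with the paper's approach.
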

\begin{proof} By labelling the vertices of the braid graph $\Gamma_{Q}$ of $Q$ by the set $\lbrace 1,..., n+1 \rbrace$, we obtain a presentation from Proposition $\ref{sympresentation}$ for $\Sigma_{n+1}$ with generating set $\lbrace (x_{i}, y_{i}) : \lbrace x_{i}, y_{i} \rbrace \text{ is an edge in } \Gamma_{Q} \rbrace$ subject to the relations $(1) - (4)$. As these correspond to the relations in the cluster presentation of $G_{Q}$ via the bijection between the vertices of $Q$ (and so the set of defining generators of $G_{Q}$) and the edges of the braid graph, it is clear that $\pi_{Q}$ is an isomorphism. 
\end{proof}
\begin{definition}\cite{GordanKerber} For some $n, k \in \mathbb{N}$, let $\rho = \lbrace \alpha_{j} : 1 \leq j \leq k \rbrace$ be a set partition of $\lbrace 1, ..., n \rbrace$. The subgroup of $\Sigma_{n}$ given by $$\Sigma_{\alpha_{1}} \times \Sigma_{\alpha_{2}} \times ... \times \Sigma_{\alpha_{k}}$$ where $\Sigma_{\alpha_{j}} = \lbrace \sigma \in \Sigma_{n} : \sigma(m) = m,  \forall  m \notin \alpha_{j} \rbrace$ is called the \textbf{Young subgroup corresponding to $\rho$} and is denoted by $Y(\rho)$. 
\end{definition}

Next, we consider the case when $r \geq 1$. That is, when $Q$ is a quiver of mutation-Dynkin type with connected components $Q_{1},..., Q_{r}$ of types $A_{n_{1}},..., A_{n_{r}}$, respectively, where $n_{1},..., n_{r} \in \mathbb{Z}^{+}$ and $ n = \Sigma_{i=1}^{r} n_{i}$ and $r \geq 1$.

Let $P$ be the disjoint union of $P_{1},..., P_{r}$, where each $P_{i}$ is a convex $(n_{i}+3)$-gon for each $1 \leq i \leq r$. As previously discussed, there exists a triangulation $\mathcal{T}_{i}$ of $P_{i}$ which gives rise to $Q_{i}$ for each $1 \leq i \leq r$. Let $\mathcal{T}$ be the collection of these triangulations. Thus $\mathcal{T}$ will be a triangulation of $P$ giving rise to $Q$. Again, as the braid graph is independent of the choice of triangulation giving rise to $Q$, we can denote it by $\Gamma_{Q}$. It follows that $\Gamma_{Q}$ will be the graph that is the disjoint union of $\Gamma_{Q_{1}},....,\Gamma_{Q_{r}}$. Note that $\Gamma_{Q}$ will contain $n$ edges and $n + r$ vertices, each with valancy $1$, $2$ or $3$. 

Choose any labelling of the vertices of $\Gamma_{Q_{i}}$ for each $1 \leq i \leq r$ by the set $N_{i} = \lbrace (\Sigma_{j=1}^{i-1} n_{j}) + i,..., (\Sigma_{j=1}^{i}n_{j}) + i \rbrace$, taking $N_{1} = \lbrace 1, 2,..., n_{1}+1 \rbrace$.

The vertex sets of the connected components give a partition $\rho := \sqcup_{j=1}^{r}N_{j}$ of the set $\lbrace 1,..., n+r \rbrace$ and we consider the Young subgroup, $Y(\rho)$, corresponding to $\rho$. 

\begin{lemma}\label{braidgraphiso2} Suppose $E_{i}$ is the edge in $\Gamma_{Q}$ corresponding to the vertex $i$ of $Q$ with endpoints $x_{i}, y_{i} \in \lbrace 1, 2, ..., n+r \rbrace$. Then there exists an isomorphism \begin{align*}
&\pi_{Q}: G_{Q} \longrightarrow Y(\rho), \\
&\pi_{Q}: t_{i} \longmapsto (x_{i}, y_{i}).
\end{align*}
\end{lemma}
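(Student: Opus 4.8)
The plan is to reduce the statement to the indecomposable case already handled in Lemma~\ref{braidgraphiso1}, using the direct-product decompositions on both sides. First I would recall from Lemma~\ref{directproduct2} that $G_Q = G_{Q_1}\times G_{Q_2}\times\cdots\times G_{Q_r}$, since $Q$ is decomposable with indecomposable components $Q_1,\dots,Q_r$. On the target side, since $\rho = \sqcup_{j=1}^r N_j$ is a set partition of $\{1,\dots,n+r\}$ with the $N_j$ pairwise disjoint, the Young subgroup $Y(\rho)$ is by definition the internal direct product $\Sigma_{N_1}\times\Sigma_{N_2}\times\cdots\times\Sigma_{N_r}$, and each factor $\Sigma_{N_j}$ is precisely the symmetric group $\Sigma_{\Gamma_{Q_j}}$ on the vertex set of the braid graph $\Gamma_{Q_j}$, once the chosen labelling of $\Gamma_{Q_j}$ by $N_j$ is fixed. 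Since $|N_j| = n_j+1$, this factor is a copy of $\Sigma_{n_j+1}$.

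Next I would apply Lemma~\ref{braidgraphiso1} to each connected component $Q_j$ separately: for each $j$, labelling the vertices of $\Gamma_{Q_j}$ by $N_j$ gives an isomorphism $\pi_{Q_j}\colon G_{Q_j}\longrightarrow \Sigma_{N_j}$ sending $t_i\mapsto (x_i,y_i)$, where $E_i$ is the edge of $\Gamma_{Q_j}$ (equivalently of $\Gamma_Q$, since $\Gamma_Q$ is the disjoint union of the $\Gamma_{Q_j}$) corresponding to the vertex $i$ of $Q_j$. Here I should note that the proof of Lemma~\ref{braidgraphiso1} goes through verbatim with the vertex set $\{1,\dots,n_j+1\}$ replaced by the arbitrary $(n_j+1)$-element label set $N_j$, because Proposition~\ref{sympresentation} only uses the combinatorial (planar-tree) structure of $\Gamma_{Q_j}$, not the particular names of its vertices. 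Then I would form the direct product $\pi_{Q_1}\times\cdots\times\pi_{Q_r}$, which is an isomorphism $G_{Q_1}\times\cdots\times G_{Q_r}\longrightarrow \Sigma_{N_1}\times\cdots\times\Sigma_{N_r} = Y(\rho)$; composing with the identifications above yields the desired $\pi_Q\colon G_Q\longrightarrow Y(\rho)$, and by construction it sends $t_i\mapsto (x_i,y_i)$ for each $i$, since each $t_i$ lies in a unique factor $G_{Q_j}$ and is sent there by $\pi_{Q_j}$.

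The only real subtlety — and the step I expect to require the most care — is checking that the direct product of the maps $\pi_{Q_j}$ genuinely lands in, and surjects onto, $Y(\rho)$ viewed as a subgroup of $\Sigma_{n+r}$: one must observe that a permutation supported on $N_j$ commutes with a permutation supported on $N_{j'}$ for $j\neq j'$ because $N_j\cap N_{j'}=\emptyset$, and that the internal product of these commuting subgroups is direct precisely because the supports are disjoint, matching the external direct product structure of $G_Q$ from Lemma~\ref{directproduct2}. Once this compatibility is spelled out, the generator images are immediate and the proof concludes; everything else is a routine transport of the $r=1$ case through the product decomposition. I would also remark in passing that the edges $E_i$ of $\Gamma_Q$ whose endpoints both lie in $N_j$ are exactly those coming from vertices of $Q_j$, so there is no ambiguity in the formula $t_i\mapsto(x_i,y_i)$.
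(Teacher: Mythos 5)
Your proposal is correct and follows essentially the same route as the paper: decompose $G_{Q}$ via Lemma~\ref{directproduct2}, apply Lemma~\ref{braidgraphiso1} to each component (the paper handles the label sets $N_{j}$ by an explicit relabelling isomorphism $p_{j}\colon\Sigma_{n_{j}+1}\to\Sigma_{N_{j}}$ rather than rerunning the $r=1$ argument with labels $N_{j}$, but this is the same idea), and assemble the componentwise maps into an isomorphism onto $Y(\rho)=\Sigma_{N_{1}}\times\cdots\times\Sigma_{N_{r}}$. Your extra remarks on disjoint supports and internal versus external direct products only make explicit what the paper leaves implicit.
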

\begin{proof} By Lemma $\ref{directproduct2}$, $$G_{Q} = G_{Q_{1}} \times...\times G_{Q_{r}}.$$ By Lemma \ref{braidgraphiso1}, for each $1 \leq j \leq r$ and any labelling of $\Gamma_{Q_{j}}$ by $\lbrace 1,..., n_{j}+1 \rbrace$ there exists an isomorphism $$\pi_{Q_{j}}: G_{Q_{j}} \longrightarrow \Sigma_{n_{j}+1},$$ $$\pi_{Q_{j}}: t_{i} \longmapsto (x'_{i}, y'_{i}),$$ where $E_{i} = \lbrace x'_{i}, y'_{i} \rbrace$ for distinct $x'_{i}, y'_{i} \in \lbrace 1,..., n_{i}+1 \rbrace$. Let $p_{j}: \Sigma_{n_{j}+1} \longrightarrow \Sigma_{N_{j}}$ be a relabelling of $\Gamma_{Q_{j}}$ to the induced labelling by $\Gamma_{Q}$. So the following map $$ p_{j} \circ \pi_{Q_{j}}: G_{Q_{j}} \longrightarrow \Sigma_{N_{j}},$$ $$p_{j} \circ \pi_{Q_{j}}: t_{i} \longmapsto (x_{i}, y_{i})$$ is an isomorphism where $x_{i}, y_{i} \in \lbrace 1, 2, ..., n+r \rbrace$ are the endpoints of $E_{i}$ in the fixed labelling of $\Gamma_{Q}$. 

So we can define an isomorphism $$\pi_{Q}: G_{Q_{1}} \times ... \times G_{Q_{r}} \longrightarrow \Sigma_{N_{1}} \times ... \times \Sigma_{N_{r}}$$ by $\pi \vert_{G_{Q_{j}}} = p_{j} \circ \pi_{Q_{j}}$ for each $1 \leq j \leq r$. 

Noting that $G_{Q} = G_{Q_{1}} \times ... \times G_{Q_{r}}$ and $Y(\rho) = \Sigma_{N_{1}} \times ... \times \Sigma_{N_{r}}$, we have obtained the desired isomorphism. 
\end{proof}
\begin{remark} It follows from Lemma $\ref{braidgraphiso2}$ that the map between the set of vertices, $V$, of $Q$ to the set of defining generators of $G_{Q}$, taking $i\in V$ to $t_{i}$, is injective. That is, for any vertices $i$ and $j$ of $Q$, $t_{i} = t_{j}$ if and only if $i = j$. 
\end{remark}
\section{main theorem: a lattice isomorphism}\label{}

In \cite{Humphreys}, the proof of Theorem $\ref{CoxeterLIT}$ examines how an element of the finite Coxeter group acts on the associated root system. We approach the problem of proving an analogous lattice isomorphism theorem for cluster groups of mutation-Dynkin type $A_{n_{1}}\sqcup...\sqcup A_{n_{r}}$ in a different way. 

First, we recall some key definitions and examples of lattices and partitions of sets that we will need. 

\begin{definition}\cite[Definition 2.4]{Priestley} A \textbf{lattice} is a partially ordered set in which every two element subset has both a least upper bound (the `join') and a greatest lower bound (the `meet'). For any two elements $X$ and $Y$ of a lattice, we denote the \textbf{join} of $X$ and $Y$ by $X\vee Y$ and the \textbf{meet} of $X$ and $Y$ by $X\wedge Y$.
\end{definition}

We give three examples of lattices which will be useful.
\begin{example}\label{latticeeg}
\begin{itemize}

\item[(1.)] Let $X$ be a set. The power set of $X$ forms a partially ordered set under inclusion which is a lattice. 

\item[(2.)] Recall that a \textbf{(set) partition} $\rho$ of a non-empty set $X$ is a collection of non-empty subsets of $X$ such that 
\begin{itemize}
\item[(a)] $X = \bigcup\rho$
\item[(b)] $\alpha_{1}\cap \alpha_{2} = \emptyset$ for every distinct $\alpha_{1}, \alpha_{2} \in \rho$.
\end{itemize} We call the elements of a partition $\rho$ the \textbf{parts} of $\rho$. When $\rho = \lbrace \alpha_{1},..., \alpha_{k}\rbrace$ is a partition of a set $X$ we employ an abuse of notation by writing $\rho = \sqcup_{j=1}^{k}\alpha_{j}$.

The set of partitions of a set $X$ forms a partially ordered set under the refinement ordering. That is, for each pair of partitions $\rho, \rho'$ of $X$, 
$$\rho \leq \rho' \Leftrightarrow \text{ every part of } \rho \text{ is a subset of some part of } \rho'.$$ 

Moreover, \cite[Theorem 5.15.1]{Penner} outlines how the meet and join of any two partitions of the same set are obtained, which we describe below. 

The partitions of $X$ are in bijection with the equivalence relations on $X$. So for each partition $\rho = \sqcup_{j=1}^{k}\alpha_{j}$ of $X$ we have a corresponding equivalence relation, $R_{\rho}$, on $X$ where for each $x, y \in X$, $$ xR_{\rho}y \Leftrightarrow x, y \in \alpha_{j} \text{ for some } 1 \leq j \leq k.$$ 

From any two equivalence relations $R_{1}$ and $R_{2}$ on a set $X$, we construct new equivalence classes, denoted by $R_{1}\cap R_{2}$  and $t(R_{1}\cup R_{2})$, in the following way. For any $x, y \in X$, \begin{itemize}
\item[(a)] $x(R_{1}\cap R_{2})y$ if and only if $ xR_{1}y \text{ and } xR_{2}y$
\item[(b)] $x(t(R_{1}\cup R_{2}))y$ if and only if there exist $ z_{0},..., z_{m} \in X$ such that $ x=z_{0}, y=z_{m}$ and either $ z_{i}R_{1}z_{i+1} $ or $ z_{i}R_{2}z_{i+1}$ for all $ 1\leq i \leq m-1$.
\end{itemize} Note that $t(R_{1}\cup R_{2})$ is the transitive closure of the binary relation $R_{1}\cup R_{2}$ on $X$. 

Given two partitions $\rho_{1}, \rho_{2}$ of the set $X$, $\rho_{1}\wedge \rho_{2}$ is the partition corresponding to $R_{1}\cap R_{2}$ and $\rho_{1}\vee \rho_{2}$ is the partition corresponding to $t(R_{1}\cup R_{2})$ \cite[Theorem 5.15.1]{Penner}.

\item[(3.)] Let $G$ be a group. Then the set of subgroups of $G$ forms a partially ordered set under inclusion. This forms a lattice and, for any subgroups $G_{1}, G_{2}$ of $G$, $G_{1}\vee G_{2} = \langle G_{1}\cup G_{2} \rangle$ and $G_{1}\wedge G_{2}= G_{1}\cap G_{2}$. 
\end{itemize}
\end{example}
\begin{definition}\cite[Definition 2.13]{Priestley} Let $L$ be a lattice. A non-empty subset $K$ of $L$ is a \textbf{sublattice} of $L$ if for every $x, y \in K$, $x\vee y, x \wedge y \in K$. 
\end{definition}
\begin{definition}\cite[Definition 2.16]{Priestley} Let $L$ and $K$ be lattices. A map $\phi: L \longrightarrow K$ is a \textbf{lattice homomorphism} if for all $x, y \in L$, $\phi(x\vee y) = \phi(x)\vee \phi(y)$ and $\phi(x\wedge y) = \phi(x)\wedge \phi(y)$. Moreover, $\phi$ is a \textbf{lattice isomorphism} if it is a bijective lattice homomorphism. 
\end{definition}

\begin{proposition}\label{orderpreserving}\cite[Proposition 2.4(ii)]{Priestley} Let $L$ and $K$ be lattices. For any map $\phi: L \longrightarrow K$, $\phi$ is a lattice isomorphism if and only if $\phi$ is an order-isomorphism. 
\end{proposition}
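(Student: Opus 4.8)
The statement to prove is Proposition~\ref{orderpreserving}: for a map $\phi\colon L\to K$ between lattices, $\phi$ is a lattice isomorphism if and only if it is an order-isomorphism (i.e.\ a bijection with $\phi$ and $\phi^{-1}$ both order-preserving). The plan is to treat the two implications separately, using throughout the standard characterisation of $\vee$ and $\wedge$ as least upper bound and greatest lower bound, together with the elementary fact that in any poset $x\le y$ if and only if $x\wedge y=x$ (equivalently $x\vee y=y$). I would first record this fact, since it is the bridge between the order relation and the lattice operations.

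\emph{Order-isomorphism $\Rightarrow$ lattice isomorphism.} Suppose $\phi$ is a bijection with both $\phi$ and $\phi^{-1}$ order-preserving. Fix $x,y\in L$ and set $m=x\wedge y$. Since $m\le x$ and $m\le y$, applying $\phi$ gives $\phi(m)\le\phi(x)$ and $\phi(m)\le\phi(y)$, so $\phi(m)$ is a lower bound of $\{\phi(x),\phi(y)\}$, whence $\phi(m)\le\phi(x)\wedge\phi(y)$. Conversely, let $z=\phi(x)\wedge\phi(y)$; then $z\le\phi(x)$ and $z\le\phi(y)$, and applying the order-preserving map $\phi^{-1}$ yields $\phi^{-1}(z)\le x$ and $\phi^{-1}(z)\le y$, so $\phi^{-1}(z)\le x\wedge y=m$, hence $z\le\phi(m)$. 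Combining the two inequalities gives $\phi(x\wedge y)=\phi(x)\wedge\phi(y)$. The argument for $\vee$ is dual (reverse all inequalities, replace ``lower bound'' by ``upper bound''), so $\phi$ is a lattice homomorphism, and being bijective it is a lattice isomorphism.

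\emph{Lattice isomorphism $\Rightarrow$ order-isomorphism.} Suppose $\phi$ is a bijective lattice homomorphism. For $x,y\in L$ with $x\le y$ we have $x\wedge y=x$, so $\phi(x)=\phi(x\wedge y)=\phi(x)\wedge\phi(y)$, which means $\phi(x)\le\phi(y)$; thus $\phi$ is order-preserving. For $\phi^{-1}$: it is a bijection, and since $\phi$ is a lattice homomorphism between lattices one checks that $\phi^{-1}$ is again a lattice homomorphism — for $a,b\in K$, write $a=\phi(x)$, $b=\phi(y)$, then $\phi(x\wedge y)=a\wedge b$ gives $\phi^{-1}(a\wedge b)=x\wedge y=\phi^{-1}(a)\wedge\phi^{-1}(b)$, and similarly for $\vee$. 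Applying the order-preserving conclusion just established to $\phi^{-1}$ shows $\phi^{-1}$ is order-preserving. Hence $\phi$ is an order-isomorphism.

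\emph{Anticipated obstacle.} There is no serious obstacle here — this is a foundational lemma and the only point requiring a little care is making the meet/join and the order relation talk to each other, which is handled once and for all by the identity $x\le y\iff x\wedge y=x$. The duality between the $\vee$ and $\wedge$ arguments should be invoked rather than written out twice. One should also be slightly careful to state explicitly what ``order-isomorphism'' means (a bijection that is order-preserving in both directions, \emph{not} merely an order-preserving bijection, since the latter need not have order-preserving inverse in a general poset); with the correct definition the equivalence is clean.
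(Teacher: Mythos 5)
Your proof is correct: both implications are argued soundly, the identity $x\le y \iff x\wedge y = x$ is exactly the right bridge, and your caution that an order-isomorphism must have order-preserving inverse (not merely be an order-preserving bijection) is well placed. Note, however, that the paper gives no proof of this proposition at all — it is quoted directly from Davey--Priestley \cite{Priestley} as a standard fact — so there is no argument in the paper to compare against; what you have written is essentially the textbook proof of the cited result.
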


Again, suppose $Q$ is a quiver of mutation-Dynkin type with connected components $Q_{1},..., Q_{r}$ of types $A_{n_{1}},..., A_{n_{r}}$, respectively, where $n_{1},..., n_{r} \in \mathbb{Z}^{+}$ and $ n = \Sigma_{i=1}^{r} n_{i}$. Take $\mathcal{T}$ to be a triangulation of the disjoint union of $(n_{i}+3)$-gons giving rise to $Q$ and consider the braid graph, $\Gamma_{Q}$, of $Q$. Fix a labelling of $\Gamma_{Q}$ by the set $\lbrace 1, ..., n+r \rbrace$. The previous section outlined how there is a bijection from the set of generators of $G_{Q}$ onto the set of edges of $\Gamma_{Q}$. We let $E_{i}$ represent the edge in $\Gamma_{Q}$ corresponding to the vertex $i$ of $Q$ for each $1 \leq i \leq n$. Moreover, from the chosen labelling of the braid graph, we obtain a partition $\rho$ of $\lbrace 1, ..., n+r \rbrace$ by taking the parts of $\rho$ to be the vertex sets of the connected components of $\Gamma_{Q}$. Suppose $E_{i}$ has endpoints $x_{i}, y_{i} \in \lbrace 1, 2, ..., n+r \rbrace$ ($x_{i}\neq y_{i}$). Then, by Lemma $\ref{braidgraphiso2}$, there exists an isomorphism \begin{align*}
&\pi_{Q}: G_{Q} \longrightarrow Y(\rho), \\
&\pi_{Q}: t_{i} \longmapsto (x_{i}, y_{i}).
\end{align*}

Let $\mathcal{I}$ be the power set of $T = \lbrace t_{1},..., t_{n}\rbrace$. This is a lattice under inclusion. 

Moreover, let $\mathcal{G}$ be the set of subgroups of $G_{Q}$ (so $\mathcal{G}$ is a lattice under inclusion) and let $\tilde{\mathcal{G}} = \lbrace G_{I} : I \in \mathcal{I} \rbrace$, i.e. $\tilde{\mathcal{G}}$ is the collection of parabolic subgroups of $G_{Q}$. Let $\mathcal{P}$ be the set of partitions of $\lbrace 1,..., n+r \rbrace$ and $\mathcal{Y}$ the set of Young subgroups of $\Sigma_{n+r}$. By Example $\ref{latticeeg}$, $\mathcal{P}$ is a  lattice under the refinement ordering. The following result is well-known (see e.g. \cite{BG}). 

\begin{proposition}\label{psi}\cite{BG} The set $\mathcal{Y}$ is a lattice under inclusion and there exists a lattice isomorphism 
\begin{align*}
&\psi: \mathcal{P}\longrightarrow \mathcal{Y}, \\ &\psi: \rho \longmapsto Y(\rho).
\end{align*}
\end{proposition}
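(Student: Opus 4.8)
The plan is to show that $\psi$ is an order-isomorphism between the poset $\mathcal{P}$ (ordered by refinement) and the poset $\mathcal{Y}$ (ordered by inclusion), and then invoke Proposition~\ref{orderpreserving}. Since $\mathcal{P}$ is already known to be a lattice (Example~\ref{latticeeg}), an order-isomorphism onto $\mathcal{Y}$ forces $\mathcal{Y}$ to be a lattice as well, with the lattice operations transported along $\psi$; Proposition~\ref{orderpreserving} then upgrades the order-isomorphism to a lattice isomorphism. So the work reduces to three claims: $\psi$ is a bijection, $\psi$ is order-preserving, and $\psi^{-1}$ is order-preserving.

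For bijectivity, surjectivity is immediate from the definition of $\mathcal{Y}$ as $\{Y(\rho) : \rho \in \mathcal{P}\}$. For injectivity, the key observation I would record is that the orbits of $Y(\rho)$ acting on $\{1, \ldots, n+r\}$ are exactly the parts of $\rho$: a non-singleton part $\alpha_{j}$ is a single $\Sigma_{\alpha_{j}}$-orbit, while a singleton part $\{a\}$ is fixed by every element of $Y(\rho)$ and hence is also an orbit. Thus $\rho$ is recovered from $Y(\rho)$ as its orbit partition, which gives injectivity.

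For order-preservation in one direction, suppose $\rho \leq \rho'$, so that each part of $\rho$ is contained in some part of $\rho'$. Then each direct factor $\Sigma_{\alpha_{j}}$ of $Y(\rho)$ sits inside the factor of $Y(\rho')$ indexed by the part of $\rho'$ containing $\alpha_{j}$, whence $Y(\rho) \subseteq Y(\rho')$. For the converse direction, suppose $Y(\rho) \subseteq Y(\rho')$; since for any subgroups $H \subseteq K$ of $\Sigma_{n+r}$ every $H$-orbit is contained in a $K$-orbit, the previous paragraph shows that each part of $\rho$ (an orbit of $Y(\rho)$) is contained in a part of $\rho'$ (an orbit of $Y(\rho')$), i.e. $\rho \leq \rho'$.

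Combining these, $\psi$ is a bijection that is order-preserving with order-preserving inverse, hence an order-isomorphism from the lattice $\mathcal{P}$ onto the poset $\mathcal{Y}$. Standard reasoning about order-isomorphisms then shows $\mathcal{Y}$ is a lattice, with $Y(\rho_{1}) \vee Y(\rho_{2}) = Y(\rho_{1} \vee \rho_{2})$ and $Y(\rho_{1}) \wedge Y(\rho_{2}) = Y(\rho_{1} \wedge \rho_{2})$, and Proposition~\ref{orderpreserving} finishes the proof. I do not expect a genuine obstacle here; the only points needing care are the orbit description of $Y(\rho)$, where the handling of singleton parts is exactly what makes $\psi^{-1}$ order-preserving, and the observation that $\mathcal{Y}$ must first be verified to be a lattice (via transport of structure) before Proposition~\ref{orderpreserving} can be applied.
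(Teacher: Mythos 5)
Your proof is correct. Note, however, that the paper does not prove this proposition at all: it is quoted as a well-known fact with a citation to Borevich--Gavron, so there is no internal argument to compare against. What you supply is exactly the standard elementary argument that the citation stands in for: the orbit partition of $Y(\rho)$ on $\{1,\ldots,n+r\}$ recovers $\rho$ (your handling of singleton parts is the right point of care, since those factors are trivial), inclusion of Young subgroups is equivalent to refinement of the corresponding partitions because orbits of a subgroup are contained in orbits of an overgroup, and transport of structure along the resulting order-isomorphism makes $\mathcal{Y}$ a lattice under inclusion, after which Proposition~\ref{orderpreserving} (or the transport argument itself, which already yields $Y(\rho_{1})\vee Y(\rho_{2})=Y(\rho_{1}\vee\rho_{2})$ and $Y(\rho_{1})\wedge Y(\rho_{2})=Y(\rho_{1}\wedge\rho_{2})$) gives the lattice isomorphism. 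So your version buys self-containedness where the paper buys brevity by outsourcing to the literature; the only mild redundancy is that once you have transported the lattice structure, the appeal to Proposition~\ref{orderpreserving} is not strictly needed.
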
  

\begin{definition} Given $I \in \mathcal{I}$ we obtain a partition $\rho_{I} \in \mathcal{P}$ in the following way. Let $\Gamma_{I}$ be the graph obtained from $\Gamma_{Q}$ by deleting all edges $E_{i}$ such that $t_{i} \notin I$. As $\Gamma_{Q}$ is a connected tree, $\Gamma_{I}$ will consist of some connected components, $\Gamma^{1}_{I}, \Gamma^{2}_{I}, ..., \Gamma^{k}_{I}$, each of which is a full subgraph of $\Gamma_{Q}$. We define $\rho_{I} = \sqcup_{j=1}^{k}\alpha_{j}$ where $\alpha_{j}$ is the vertex set of the connected component $\Gamma^{j}_{I}$. 
\end{definition} Let $\tilde{\mathcal{P}} = \lbrace \rho_{I} : I \in \mathcal{I}\rbrace $ and $\tilde{\mathcal{Y}} = \lbrace Y(\rho)  : \rho \in \tilde{\mathcal{P}} \rbrace $. In this section, we will show that there exists a lattice isomorphism:
\begin{align*}
&\phi: \mathcal{I}\overset{\text{Lem 4.13}}{\longrightarrow} \tilde{\mathcal{Y}}, \\ &\phi: I\longmapsto Y(\rho_{I}). 
\end{align*}
To do this, we will show there exists an order isomorphism:
\begin{align*}
&\phi_{1}: \mathcal{I}\overset{\text{Lem 4.8}}{\longrightarrow} \tilde{\mathcal{P}}, \\
&\phi_{1}: I\longmapsto \rho_{I}
\end{align*} 
and prove that $\tilde{\mathcal{P}}$ is a sublattice of $\mathcal{P}$ under the refinement ordering. Thus, by Proposition $\ref{orderpreserving}$, $\phi_{1}$ will be a lattice isomorphism. We will further show that $\tilde{\mathcal{Y}}$ is a sublattice of $\mathcal{Y}$ under inclusion and use Proposition $\ref{psi}$ to show the following map is a lattice isomorphism.
\begin{align*}
&\phi_{2}: \tilde{\mathcal{P}}\overset{\text{Lem 4.12}}{\longrightarrow} \tilde{\mathcal{Y}}, \\
&\phi_{2}: \rho_{I} \longmapsto Y(\rho_{I}).
\end{align*}
By composing $\phi_{1}$ and $\phi_{2}$, we obtain the desired lattice isomorphism between $\mathcal{I}$ and $\tilde{\mathcal{Y}}$. This approach is summarised in the following diagram. 

\begin{center}

\begin{tikzcd}
    \mathcal{I} \arrow[bend left=60,swap]{rrrr}{\phi}[swap]{\text{Lem 4.13}} \arrow[rightarrow]{rr}{\text{Lem 4.8}}[swap]{\phi_{1}}
&
  & \tilde{\mathcal{P}}\arrow[rightarrow]{rr}{\text{Lem 4.12}}[swap]{\phi_{2}}\arrow[hookrightarrow]{dd}[swap]{\text{Lem 4.9 \& 4.10}} 
  &
  & \tilde{\mathcal{Y}}\arrow[hookrightarrow]{dd}{\text{Lem 4.11}} 
  &  \\ 
  \\
  &
  & \mathcal{P} \arrow[rightarrow]{rr}{\text{Prop 4.6}}[swap]{\psi}
  &
  & \mathcal{Y}
\end{tikzcd}

\end{center}

Finally, we will show that, for each $I \in \mathcal{I}$, the parabolic subgroup $G_{I}$ is isomorphic to the Young subgroup $Y(\rho_{I})$ and use this to prove our first main result. 

\begin{lemma}\label{latticeeqiv} There exists an order-isomorphism.
\begin{align*} &\phi_{1}: \mathcal{I}\longrightarrow \tilde{\mathcal{P}}, \\ &\phi_{1}: I\longmapsto \rho_{I}.
\end{align*} 
\end{lemma}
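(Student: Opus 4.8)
The plan is to show directly that $\phi_1$ is a well-defined bijection that is order-preserving in both directions; by Proposition~\ref{orderpreserving} (once we later know $\tilde{\mathcal P}$ is a sublattice) this is all that is needed, but an order-isomorphism is exactly an order-preserving bijection with order-preserving inverse, so I will aim for that. Well-definedness and surjectivity are immediate from the definition of $\rho_I$ and of $\tilde{\mathcal P}$: every $\rho_I$ lies in $\tilde{\mathcal P}$ by construction, and every element of $\tilde{\mathcal P}$ is by definition $\rho_I$ for some $I\in\mathcal I$. So the two substantive points are injectivity and the fact that $\phi_1$ and $\phi_1^{-1}$ are order-preserving.

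First I would set up the bookkeeping carefully. Fix the labelling of $\Gamma_Q$ by $\{1,\dots,n+r\}$ and recall the bijection $i\mapsto E_i$ between the vertex set of $Q$ (equivalently $T$, via $i\mapsto t_i$, which is injective by the Remark following Lemma~\ref{braidgraphiso2}) and the edge set of $\Gamma_Q$. For $I\in\mathcal I$ let $\Gamma_I$ be the subgraph of $\Gamma_Q$ with all $n+r$ vertices but only the edges $E_i$ with $t_i\in I$; then $\rho_I$ is the partition of $\{1,\dots,n+r\}$ into the vertex sets of the connected components of $\Gamma_I$. The key structural observation, which I would state as the engine of the whole argument, is that because $\Gamma_Q$ is a tree, an edge $E_i$ is determined by the partition it induces: $E_i=\{x_i,y_i\}$ is the unique edge of $\Gamma_Q$ whose removal separates $x_i$ from $y_i$, and more generally, for a set $S$ of edges of a tree, $S$ is recoverable from the resulting partition into components — an edge $\{x,y\}$ of $\Gamma_Q$ lies in $S$ if and only if $x$ and $y$ lie in different parts of the partition induced by $S$. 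This is the standard fact that the edge set of a spanning forest of a tree is in bijection with the partitions it refines into, and it gives me everything.

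From that observation, \textbf{injectivity} is immediate: if $\rho_I=\rho_J$ then for each edge $E_i=\{x_i,y_i\}$ of $\Gamma_Q$ we have $t_i\in I \iff x_i,y_i$ are in different parts of $\rho_I \iff x_i,y_i$ are in different parts of $\rho_J \iff t_i\in J$, so $I=J$ (using injectivity of $i\mapsto t_i$). For \textbf{order-preservation}: if $I\subseteq J$ then $\Gamma_I$ is a subgraph of $\Gamma_J$, so any two vertices in a common component of $\Gamma_I$ are in a common component of $\Gamma_J$, i.e.\ every part of $\rho_I$ is contained in a part of $\rho_J$, which is exactly $\rho_I\le\rho_J$ in the refinement order. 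For the \textbf{converse}, suppose $\rho_I\le\rho_J$ and take $t_i\in I$; then the endpoints $x_i,y_i$ of $E_i$ lie in the same part of $\rho_I$ (they are joined by $E_i$ in $\Gamma_I$), hence in the same part of $\rho_J$, hence are joined by a path in $\Gamma_J$; but $\Gamma_J\subseteq\Gamma_Q$ and $\Gamma_Q$ is a tree, so the unique path in $\Gamma_Q$ between $x_i$ and $y_i$ — namely the single edge $E_i$ — must lie in $\Gamma_J$, so $t_i\in J$. Hence $I\subseteq J$. Thus $\phi_1$ is an order-preserving bijection with order-preserving inverse, i.e.\ an order-isomorphism.

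The only place any care is really required is the converse-direction argument just sketched: it is where treeness of $\Gamma_Q$ is genuinely used (the claim fails for graphs with cycles), so I would make sure to invoke explicitly that $\Gamma_Q$ is a connected tree and that the path between any two of its vertices is unique. Everything else is routine unravelling of the definitions of $\rho_I$ and of the refinement order.
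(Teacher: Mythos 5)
Your argument is essentially the paper's: both directions of $I \subseteq J \Leftrightarrow \rho_I \le \rho_J$ are established from the fact that $\Gamma_Q$ is a tree, so the unique path in $\Gamma_Q$ between the endpoints of an edge $E_i$ is $E_i$ itself (the paper phrases the converse as a contradiction with some $t_i \in I \setminus J$, you phrase it directly; surjectivity is definitional in both, and the paper leaves injectivity to follow from the two order implications). One slip to correct: your ``key structural observation'' and the injectivity chain state the recoverability criterion backwards --- for an edge $E_i=\{x_i,y_i\}$ of the tree $\Gamma_Q$ one has $t_i\in I$ if and only if $x_i$ and $y_i$ lie in the \emph{same} part of $\rho_I$, not in different parts (if $t_i\notin I$, treeness forces the endpoints into different components of $\Gamma_I$). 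You in fact use the correct version in your converse-direction paragraph, and injectivity is automatic once both order implications are in place, so the proof stands after this correction.
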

\begin{proof} To show that $\phi_{1}$ is an order isomorphism, we must show that for any $I, J \in \mathcal{I}$,  $I \subseteq J$ if and only if $\rho_{I} \leq \rho_{J}$.

Suppose $I \subseteq J$. Then $\Gamma_{I}$ is a subgraph of $\Gamma_{J}$, meaning the vertex set of each connected component of $\Gamma_{I}$ is a subset of the vertex set of some component of $\Gamma_{J}$ and so $\rho_{I} \leq \rho_{J}$. 

If $\rho_{I} \leq \rho_{J}$ then each part of $\rho_{I}$ is a subset of a part of $\rho_{J}$, meaning the vertex set of each connected component of $\Gamma_{I}$ is a subset of the vertex set of some component of $\Gamma_{J}$. Recall $\Gamma_{I}$ is obtained by deleting all edges of $\Gamma_{Q}$ corresponding to all  $t_{i} \notin I$. If there existed some $t_{i} \in I \setminus J$, then $E_{i}$ would lie in a connected component of $\Gamma_{I}$, so the vertex set of this component would contain the endpoints of $E_{i}$. However, as $t_{i} \notin J$, $E_{i}$ would not lie in any connected component of $\Gamma_{J}$. Thus the endpoints of $E_{i}$ would lie in separate connected components of $\Gamma_{J}$, contradicting the fact that the vertex set of each connected component of $\Gamma_{I}$ is a subset of the vertex set of some component of $\Gamma_{J}$. Thus $I \subseteq J$. 

\end{proof}

We go on to show that $\tilde{\mathcal{P}}$ is a lattice and so $\phi_{1}$ is a lattice isomorphism. 

\begin{lemma}\label{phi1} The set $\tilde{\mathcal{P}}$ is a lattice under the refinement ordering. 
\end{lemma}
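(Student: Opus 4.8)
The plan is to obtain this at once from the order-isomorphism $\phi_1$ of Lemma~\ref{latticeeqiv}, using the elementary fact that being a lattice is an order-theoretic property and is therefore transported across order isomorphisms. The only input we need is that $\mathcal{I}$, which is the power set of $T$, is a lattice under inclusion (Example~\ref{latticeeg}(1)), with $I \vee J = I \cup J$ and $I \wedge J = I \cap J$.

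Concretely, I would argue directly that for any $\rho_I, \rho_J \in \tilde{\mathcal{P}}$ one has $\rho_I \vee \rho_J = \rho_{I \cup J}$ and $\rho_I \wedge \rho_J = \rho_{I \cap J}$ in the poset $(\tilde{\mathcal{P}}, \le)$, where $\le$ is the refinement order restricted from $\mathcal{P}$. For the join: since $I \subseteq I \cup J$ and $J \subseteq I \cup J$, Lemma~\ref{latticeeqiv} gives $\rho_I \le \rho_{I \cup J}$ and $\rho_J \le \rho_{I \cup J}$, so $\rho_{I \cup J}$ is an upper bound of $\{\rho_I, \rho_J\}$; and if $\rho_K \in \tilde{\mathcal{P}}$ is any upper bound, then $\rho_I \le \rho_K$ and $\rho_J \le \rho_K$, so Lemma~\ref{latticeeqiv} gives $I \subseteq K$ and $J \subseteq K$, hence $I \cup J \subseteq K$, hence (again by Lemma~\ref{latticeeqiv}) $\rho_{I \cup J} \le \rho_K$. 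Thus $\rho_{I \cup J}$ is the least upper bound. The computation of the meet is the mirror image, replacing $\cup$ by $\cap$ throughout. Since every two-element subset of $\tilde{\mathcal{P}}$ therefore has a join and a meet, $\tilde{\mathcal{P}}$ is a lattice under the refinement ordering.

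There is no real obstacle here: all the content sits in Lemma~\ref{latticeeqiv}, which is already available, and the present statement is a formal consequence of it together with the fact that $\mathcal{I}$ is a lattice. The one point worth stressing is what is \emph{not} being claimed at this stage: that these joins and meets of the $\rho_I$ agree with the join and meet computed in the ambient lattice $\mathcal{P}$. That stronger sublattice assertion is handled separately by the later lemmas indicated in the diagram. Once $\tilde{\mathcal{P}}$ is known to be a lattice, Proposition~\ref{orderpreserving} upgrades the order isomorphism $\phi_1$ of Lemma~\ref{latticeeqiv} to a lattice isomorphism $\mathcal{I} \to \tilde{\mathcal{P}}$, as promised in the outline.
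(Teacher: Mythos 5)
Your argument does prove the statement as literally phrased: transporting the lattice structure of $\mathcal{I}$ across the order-isomorphism of Lemma~\ref{latticeeqiv} shows that every pair in the poset $(\tilde{\mathcal{P}},\leq)$ has a join and a meet, namely $\rho_{I\cup J}$ and $\rho_{I\cap J}$. However, this is a genuinely different and strictly weaker route than the paper's, and in the context of the paper it leaves a gap. The paper's proof shows that $\tilde{\mathcal{P}}$ is a \emph{sublattice} of $\mathcal{P}$: it verifies, via the correspondence with equivalence relations, that $\rho_{I}\wedge\rho_{J}=\rho_{I\cap J}$ and $\rho_{I}\vee\rho_{J}=\rho_{I\cup J}$ where $\wedge$ and $\vee$ are computed in the ambient partition lattice $\mathcal{P}$. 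Your closing remark, that this stronger sublattice assertion ``is handled separately by the later lemmas indicated in the diagram,'' misreads the paper's architecture: no later lemma proves it. On the contrary, the Corollary following this lemma, Lemma~\ref{sublattice} (which uses $\psi(\rho_{I}\vee\rho_{J})=\psi(\rho_{I\cup J})$ with $\vee$ taken in $\mathcal{P}$, since $\psi$ of Proposition~\ref{psi} is a lattice isomorphism $\mathcal{P}\to\mathcal{Y}$), and the claim in Lemma~\ref{phi2} that $\psi\vert_{\tilde{\mathcal{P}}}$ is a lattice homomorphism all cite Lemma~\ref{phi1} precisely for these ambient identities. A purely intrinsic lattice structure on $\tilde{\mathcal{P}}$ does not supply them: in general a subposet of a lattice can be a lattice without being a sublattice, and then restricting a lattice isomorphism of the ambient lattices need not respect the intrinsic joins and meets.

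Moreover, the missing part is not formal. The identity $\rho_{I}\wedge_{\mathcal{P}}\rho_{J}=\rho_{I\cap J}$ (equivalently $R_{I}\cap R_{J}=R_{I\cap J}$) uses the fact that $\Gamma_{Q}$ is a tree (or forest): two vertices joined by a path in $\Gamma_{I}$ and by a path in $\Gamma_{J}$ must be joined by the \emph{unique} path in $\Gamma_{Q}$, whose edges therefore all correspond to elements of $I\cap J$. For subgraphs of a general graph this fails, so some genuinely geometric input is required here, and your order-theoretic transport argument sidesteps exactly this content. To make your proof serve the paper's purposes you would need to add the computation of the ambient meet (the join, $t(R_{I}\cup R_{J})=R_{I\cup J}$, is easier and holds without the tree hypothesis), which is in effect the paper's proof.
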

\begin{proof} We prove that $\tilde{\mathcal{P}}$ is a lattice under refinement by showing that it is a sublattice of $\mathcal{P}$. 

By definition, to show that $\tilde{\mathcal{P}}$ is a sublattice, we must show that for any $I, J \in \mathcal{I}$,  $\rho_{I}\vee \rho_{J}, \rho_{I}\wedge \rho_{J} \in \tilde{\mathcal{P}} $. We show that 
\begin{align}
    \rho_{I\cap J} &= \rho_{I}\wedge \rho_{J}, \\
\rho_{I\cup J} &= \rho_{I}\vee \rho_{J}.
\end{align}

For ease of notation we will write $R_{K}$ for the equivalence relation corresponding to the partition $\rho_{K} \in \tilde{\mathcal{P}}$. Noting Example $\ref{latticeeg}$ $(2.)$, we need to show that $\rho_{I\cap J}$ is precisely the partition corresponding to the equivalence relation $R_{I}\cap R_{J}$ and $\rho_{I\cup J}$ is precisely the partition corresponding to the equivalence relation $t(R_{I}\cup R_{J})$.

Let $\rho_{I} = \sqcup_{i=1}^{a}\alpha_{i}$, $\rho_{J}= \sqcup_{j=1}^{b}\alpha'_{j}$ and $\rho_{I\cap J} = \sqcup_{l=1}^{c}\beta_{l}$. First, we show that for all $x, y \in \lbrace 1,..., n+r \rbrace$, $$xR_{I\cap J}y \Leftrightarrow x(R_{I}\cap R_{J})y.$$

Recall that for any $K \in \mathcal{I}$, $\rho_{K}$ is obtained by deleting the edges in $\Gamma_{Q}$ corresponding to the $t_{i}$ not in $K$ then taking the parts of $\rho_{K}$ to be the vertex sets of the connected components of this resulting graph, denoted by $\Gamma_{K}$.  

As $I\cap J \subseteq I, J$ and as $\Gamma_{Q}$ is a tree, for any two distinct vertices $x$ and $y$ lying in the same connected component of $\Gamma_{I\cap J }$, the unique path from $x$ to $y$ in $\Gamma_{Q}$ must consist only of edges corresponding to some $t_{i} \in I\cap J$. Therefore there exists a path from $x$ to $y$ in both $\Gamma_{I}$ and $\Gamma_{J}$. Thus a connected component in $\Gamma_{I\cap J }$ is a subgraph of some connected component in both $\Gamma_{I}$ and in $\Gamma_{J}$. So $xR_{I\cap J}y$ implies $ x(R_{I}\cap R_{J})y.$

Conversely, suppose two distinct vertices $x$ and $y$ lie in the same component in $\Gamma_{I}$ and the same component in $\Gamma_{J}$. Then there exists a path in $\Gamma_{Q}$ between $x$ and $y$ consisting only of edges corresponding to some $t_{i} \in I$ and a path between $x$ and $y$ consisting only of edges corresponding to some $t_{i} \in J$. However, $\Gamma_{Q}$ is a tree, meaning any existing path between $x$ and $y$ is unique. Thus the edges in the path between $x$ and $y$ in $\Gamma_{Q}$ consist only of edges corresponding to some $t_{i} \in I\cap J$, giving that $x$ and $y$ lie in the same connected component of $\Gamma_{I\cap J}$. So $ x(R_{I}\cap R_{J})y$ implies $xR_{I\cap J}y$. 

As the equivalence relation corresponding to $\rho_{I\cap J}$ is precisely $R_{I}\cap R_{J}$, it must be that $\rho_{I\cap J} = \rho_{I}\wedge \rho_{J}$, so $(1)$ is shown. 

Finally, we show that for all $x, y \in \lbrace 1,..., n+r \rbrace$, $$xR_{I\cup J}y \Leftrightarrow x(t(R_{I}\cup R_{J}))y.$$

For any $x, y \in \lbrace 1,..., n+r \rbrace, xR_{I\cup J}y$ if and only if $x$ and $y$ lie in the same connected component of $\Gamma_{I\cup J}$. This occurs if and only if there exists a unique path in $\Gamma_{Q}$ between $x$ and $y$ consisting only of edges corresponding to elements of $ I\cup J$. That is, if and only if there exists distinct $z_{0},..., z_{m} \in \lbrace 1,..., n+r \rbrace$ such that $ x=z_{0}, y=z_{m}$ and for all $1\leq i \leq m$ either $t_{i} \in I$ or $t_{i} \in J$, where $t_{i}$ corresponds to the edge in $\Gamma_{Q}$ with endpoints $(z_{i-1}, z_{i})$. In other words, there exist distinct $z_{0},..., z_{m} \in \lbrace 1,..., n+r \rbrace$ such that $ x=z_{0}, y=z_{m}$ and for all $1\leq i \leq m-1$ either $ z_{i}R_{1}z_{i+1} $ or $ z_{i}R_{2}z_{i+1}.$ So $xR_{I\cup J}y \Leftrightarrow x(t(R_{I}\cup R_{J}))y.$

Hence both $(1)$ and $(2)$ are shown, so $\rho_{I}\vee \rho_{J}, \rho_{I}\wedge \rho_{J} \in \tilde{\mathcal{P}}$ for all $I, J \in \mathcal{I}$ and so $\tilde{\mathcal{P}}$ is a sublattice of $\mathcal{P}$.
\end{proof}

From Lemma \ref{latticeeqiv} together with Lemma \ref{phi1} and Proposition \ref{orderpreserving} we conclude the following.

\begin{corollary} The set $\tilde{\mathcal{P}}$ is a sublattice of $\mathcal{P}$ isomorphic to $\mathcal{I}$.
\end{corollary}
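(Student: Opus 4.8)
The plan is to assemble the corollary directly from the three results it cites, with no new argument required. First I would invoke Lemma~\ref{phi1}, which already does the work of the word ``sublattice'': its proof shows that $\rho_I\vee\rho_J$ and $\rho_I\wedge\rho_J$ (computed in $\mathcal{P}$) lie in $\tilde{\mathcal{P}}$ for all $I,J\in\mathcal{I}$, so $\tilde{\mathcal{P}}$ is closed under the meet and join of $\mathcal{P}$ and hence is a lattice under the refinement ordering in its own right. Next I would recall from Example~\ref{latticeeg}(1) that $\mathcal{I}$, being the power set of $T$ ordered by inclusion, is also a lattice.

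With both $\mathcal{I}$ and $\tilde{\mathcal{P}}$ now identified as lattices, the hypotheses of Proposition~\ref{orderpreserving} are satisfied, so the order-isomorphism $\phi_1\colon\mathcal{I}\to\tilde{\mathcal{P}}$, $I\mapsto\rho_I$, constructed in Lemma~\ref{latticeeqiv} is automatically a lattice isomorphism. Combining these observations yields precisely the statement: $\tilde{\mathcal{P}}$ is a sublattice of $\mathcal{P}$, and it is lattice-isomorphic to $\mathcal{I}$ via $\phi_1$.

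Since every ingredient is already in place, there is no real obstacle here; the corollary is a two-line deduction. The only point worth flagging in the write-up is the logical ordering: Proposition~\ref{orderpreserving} upgrades an order-isomorphism to a lattice isomorphism \emph{only between lattices}, so one genuinely needs Lemma~\ref{phi1} first — to know $\tilde{\mathcal{P}}$ is a lattice — before the order-isomorphism of Lemma~\ref{latticeeqiv} can be promoted. I would therefore present the proof by explicitly chaining Lemma~\ref{phi1}, Lemma~\ref{latticeeqiv}, and Proposition~\ref{orderpreserving} in that order.
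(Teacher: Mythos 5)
Your proposal is correct and matches the paper's own argument: the corollary is stated there as an immediate consequence of Lemma~\ref{latticeeqiv} (the order-isomorphism), Lemma~\ref{phi1} (closure of $\tilde{\mathcal{P}}$ under meet and join in $\mathcal{P}$), and Proposition~\ref{orderpreserving}. Your remark about the logical ordering — needing $\tilde{\mathcal{P}}$ to be a lattice before the order-isomorphism can be upgraded — is exactly the point the paper's deduction relies on.
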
 We now show that $\tilde{\mathcal{Y}}$ is a sublattice of $\mathcal{Y}$.
\begin{lemma}\label{sublattice} The set $\tilde{\mathcal{Y}}$ is a sublattice of $\mathcal{Y}$.
\end{lemma}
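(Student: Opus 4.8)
The plan is to obtain this at once from the two facts already established: $\psi\colon\mathcal{P}\to\mathcal{Y}$ is a lattice isomorphism (Proposition~\ref{psi}) and $\tilde{\mathcal{P}}$ is a sublattice of $\mathcal{P}$ (Lemma~\ref{phi1}). The underlying principle is that the image of a sublattice under a lattice isomorphism is again a sublattice; the present lemma is just the instance of this principle with the sublattice $\tilde{\mathcal{P}}\subseteq\mathcal{P}$ and the isomorphism $\psi$, noting that $\tilde{\mathcal{Y}}=\psi(\tilde{\mathcal{P}})$ by the very definitions of $\tilde{\mathcal{P}}$ and $\tilde{\mathcal{Y}}$.

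Concretely, I would proceed as follows. First observe $\tilde{\mathcal{Y}}$ is non-empty, since $\emptyset\in\mathcal{I}$ gives $\rho_{\emptyset}\in\tilde{\mathcal{P}}$ and hence $Y(\rho_{\emptyset})\in\tilde{\mathcal{Y}}$. Now take two arbitrary elements of $\tilde{\mathcal{Y}}$; by definition of $\tilde{\mathcal{Y}}$ and $\tilde{\mathcal{P}}$ they have the form $Y(\rho_{I})=\psi(\rho_{I})$ and $Y(\rho_{J})=\psi(\rho_{J})$ for some $I,J\in\mathcal{I}$. Since $\psi$ is a lattice homomorphism on all of $\mathcal{P}$, the join and meet of these subgroups computed in $\mathcal{Y}$ satisfy
\begin{align*}
Y(\rho_{I})\vee Y(\rho_{J}) &= \psi(\rho_{I}\vee\rho_{J}), \\
Y(\rho_{I})\wedge Y(\rho_{J}) &= \psi(\rho_{I}\wedge\rho_{J}),
\end{align*}
where the operations inside $\psi$ are taken in $\mathcal{P}$ under the refinement ordering. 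By Lemma~\ref{phi1}, $\rho_{I}\vee\rho_{J}\in\tilde{\mathcal{P}}$ and $\rho_{I}\wedge\rho_{J}\in\tilde{\mathcal{P}}$, so both $Y(\rho_{I})\vee Y(\rho_{J})$ and $Y(\rho_{I})\wedge Y(\rho_{J})$ lie in $\psi(\tilde{\mathcal{P}})=\tilde{\mathcal{Y}}$. This is exactly the sublattice condition.

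The only point needing a moment's attention — and it is not a genuine obstacle — is that the join and meet of $Y(\rho_{I})$ and $Y(\rho_{J})$ in the ambient lattice $\mathcal{Y}$ of \emph{all} Young subgroups of $\Sigma_{n+r}$ really do coincide with $\psi$ applied to the corresponding operations in $\mathcal{P}$; but this is immediate because $\psi$ is an isomorphism between the full lattices $\mathcal{P}$ and $\mathcal{Y}$ (Proposition~\ref{psi}), not merely an order-embedding onto a subset, so it transports meets and joins in both directions. All the substantive work has already been carried out in Lemma~\ref{phi1} and Proposition~\ref{psi}, so this lemma is essentially a formal consequence.
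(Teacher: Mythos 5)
Your proposal is correct and follows essentially the same route as the paper: both use Proposition~\ref{psi} to pull the join and meet of $Y(\rho_I)$ and $Y(\rho_J)$ back to $\psi(\rho_I\vee\rho_J)$ and $\psi(\rho_I\wedge\rho_J)$, and then invoke Lemma~\ref{phi1} to conclude these lie in $\psi(\tilde{\mathcal{P}})=\tilde{\mathcal{Y}}$ (the paper merely makes the identifications $\rho_I\vee\rho_J=\rho_{I\cup J}$ and $\rho_I\wedge\rho_J=\rho_{I\cap J}$ explicit). No gaps.
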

\begin{proof} For all $I, J \in \mathcal{I}$, 
\begin{align*}
Y(\rho_{I})\vee Y(\rho_{J}) &= \psi(\rho_{I})\vee\psi(\rho_{J}) \\
&= \psi(\rho_{I}\vee\rho_{J}) \text{ by Proposition \ref{psi} }\\
&= \psi(\rho_{I\cup J}) \text{ by Lemma \ref{phi1} } \\
&=Y(\rho_{I\cup J}) \in \tilde{\mathcal{Y}}
\end{align*} and 
\begin{align*}
Y(\rho_{I})\wedge Y(\rho_{J}) &= \psi(\rho_{I})\wedge\psi(\rho_{J}) \\
&= \psi(\rho_{I}\wedge\rho_{J}) \text{ by Proposition \ref{psi} }\\
&= \psi(\rho_{I\cap J}) \text{ by Lemma \ref{phi1} } \\
&=Y(\rho_{I\cap J}) \in \tilde{\mathcal{Y}}.
\end{align*} Thus $\tilde{\mathcal{Y}}$ is a sublattice of $\mathcal{Y}$. 
\end{proof}
\begin{lemma}\label{phi2} There exists a lattice isomorphism 
\begin{align*}
&\phi_{2}: \tilde{\mathcal{P}}\longrightarrow \tilde{\mathcal{Y}}, \\ &\phi_{2}: \rho_{I} \longmapsto Y(\rho_{I}).
\end{align*}
\end{lemma}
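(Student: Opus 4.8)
The plan is to exhibit $\phi_{2}$ as nothing more than the restriction to $\tilde{\mathcal{P}}$ of the lattice isomorphism $\psi\colon\mathcal{P}\to\mathcal{Y}$ of Proposition~\ref{psi}, and then to read off all the required properties from that together with Lemma~\ref{phi1} and Lemma~\ref{sublattice}. Concretely, I would first note that for $\rho_{I}\in\tilde{\mathcal{P}}$ we have $\psi(\rho_{I})=Y(\rho_{I})$, so $\psi|_{\tilde{\mathcal{P}}}$ is exactly the assignment $\rho_{I}\mapsto Y(\rho_{I})$ defining $\phi_{2}$; hence $\phi_{2}$ is well defined. By the very definition of $\tilde{\mathcal{Y}}=\lbrace Y(\rho):\rho\in\tilde{\mathcal{P}}\rbrace$, the image $\psi(\tilde{\mathcal{P}})$ equals $\tilde{\mathcal{Y}}$, so $\phi_{2}\colon\tilde{\mathcal{P}}\to\tilde{\mathcal{Y}}$ is surjective; and it is injective because $\psi$ is injective on all of $\mathcal{P}$.

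It then remains to check that $\phi_{2}$ is a lattice homomorphism, where the joins and meets on the source are those of $\tilde{\mathcal{P}}$ and on the target those of $\tilde{\mathcal{Y}}$. This is where the two sublattice statements enter: by Lemma~\ref{phi1}, for $I,J\in\mathcal{I}$ the join and meet of $\rho_{I},\rho_{J}$ computed in $\tilde{\mathcal{P}}$ agree with $\rho_{I}\vee\rho_{J}$ and $\rho_{I}\wedge\rho_{J}$ computed in $\mathcal{P}$ (and equal $\rho_{I\cup J}$, $\rho_{I\cap J}$), and by Lemma~\ref{sublattice} the join and meet of $Y(\rho_{I}),Y(\rho_{J})$ computed in $\tilde{\mathcal{Y}}$ agree with those computed in $\mathcal{Y}$. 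Combining these with the fact that $\psi$ is a lattice homomorphism (Proposition~\ref{psi}) gives, for all $I,J\in\mathcal{I}$,
\[
\phi_{2}(\rho_{I}\vee\rho_{J})=\psi(\rho_{I}\vee\rho_{J})=\psi(\rho_{I})\vee\psi(\rho_{J})=Y(\rho_{I})\vee Y(\rho_{J})=\phi_{2}(\rho_{I})\vee\phi_{2}(\rho_{J}),
\]
and the identical computation with $\wedge$ in place of $\vee$. Thus $\phi_{2}$ is a bijective lattice homomorphism, hence a lattice isomorphism, which is the claim; alternatively one could invoke Proposition~\ref{orderpreserving} after checking $\phi_{2}$ is an order-isomorphism, but the direct verification above is just as short.

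I do not expect any genuine obstacle here: the substantive work has already been done in Proposition~\ref{psi}, Lemma~\ref{phi1} and Lemma~\ref{sublattice}. The only point requiring a little care is the routine one that a bijection between two sublattices is a lattice isomorphism only once one knows its meet/join match those of the ambient lattices on both sides — and that is precisely what Lemmas~\ref{phi1} and~\ref{sublattice} supply, so the argument reduces to stringing these facts together.
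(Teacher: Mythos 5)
Your proposal is correct and follows essentially the same route as the paper: both define $\phi_{2}$ as the restriction $\psi\vert_{\tilde{\mathcal{P}}}$ of the lattice isomorphism from Proposition~\ref{psi}, with surjectivity onto $\tilde{\mathcal{Y}}$ by definition and injectivity inherited from $\psi$. Your explicit check that meets and joins in $\tilde{\mathcal{P}}$ and $\tilde{\mathcal{Y}}$ agree with those of the ambient lattices (via Lemma~\ref{phi1} and Lemma~\ref{sublattice}) simply spells out what the paper leaves implicit when it says the restriction ``induces'' the isomorphism.
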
 
\begin{proof} It follows from Proposition $\ref{psi}$ that 
\begin{align*}
&\psi\vert_{\tilde{\mathcal{P}}}: \tilde{\mathcal{P}} \longrightarrow \mathcal{Y}, \\ &\psi\vert_{\tilde{\mathcal{P}}}: \rho_{I} \longmapsto Y(\rho_{I})
\end{align*} is an injective lattice homomorphism. As $\tilde{\mathcal{Y}} = im(\psi\vert_{\tilde{\mathcal{P}}})$, we conclude that $\psi\vert_{\tilde{\mathcal{P}}}$ induces the lattice isomorphism:
\begin{align*}
&\phi_{2}: \tilde{\mathcal{P}}\longrightarrow \tilde{\mathcal{Y}}, \\ &\phi_{2}: \rho_{I} \longmapsto Y(\rho_{I}).
\end{align*}

\end{proof}

Thus $\tilde{\mathcal{Y}}$ is a sublattice of $\mathcal{Y}$ isomorphic to $\tilde{\mathcal{P}}$ and so isomorphic to $\mathcal{I}$. 
\begin{lemma}\label{phi7} There exists a lattice isomorphism 
\begin{align*}
&\phi: \mathcal{I}\longrightarrow \tilde{\mathcal{Y}}, \\ &\phi: I\longmapsto Y(\rho_{I}).
\end{align*}
\end{lemma}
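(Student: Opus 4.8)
The plan is to obtain $\phi$ simply as the composite $\phi_{2} \circ \phi_{1}$ of the two maps already built in this section, so the work reduces to assembling facts already proved. First I would record that $\phi_{1} \colon \mathcal{I} \to \tilde{\mathcal{P}}$, $I \mapsto \rho_{I}$, is an order-isomorphism by Lemma~\ref{latticeeqiv}, and that both its domain $\mathcal{I}$ (a power-set lattice under inclusion, Example~\ref{latticeeg}(1.)) and its codomain $\tilde{\mathcal{P}}$ (Lemma~\ref{phi1}) are lattices. Hence Proposition~\ref{orderpreserving} upgrades $\phi_{1}$ from an order-isomorphism to a genuine lattice isomorphism. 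Next, $\phi_{2} \colon \tilde{\mathcal{P}} \to \tilde{\mathcal{Y}}$, $\rho_{I} \mapsto Y(\rho_{I})$, is a lattice isomorphism by Lemma~\ref{phi2}.

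Then I would set $\phi = \phi_{2} \circ \phi_{1}$ and observe that $\phi(I) = \phi_{2}(\rho_{I}) = Y(\rho_{I})$, so $\phi$ has exactly the stated formula. It remains only to note that the composite of two lattice isomorphisms is a lattice isomorphism: the composite of bijections is a bijection, and for all $X, Y \in \mathcal{I}$ one has $\phi(X \vee Y) = \phi_{2}(\phi_{1}(X \vee Y)) = \phi_{2}(\phi_{1}(X) \vee \phi_{1}(Y)) = \phi_{2}(\phi_{1}(X)) \vee \phi_{2}(\phi_{1}(Y)) = \phi(X) \vee \phi(Y)$, using that $\phi_{1}$ and $\phi_{2}$ are lattice homomorphisms, and symmetrically for $\wedge$. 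This gives the claim.

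There is essentially no obstacle at this stage; the substantive content was already carried out in Lemmas~\ref{latticeeqiv}, \ref{phi1}, \ref{sublattice} and \ref{phi2}. The one point that warrants a moment's care is that $\phi_{1}$ is, a priori, only an order-isomorphism, so one is not entitled to compose it as a lattice isomorphism until Proposition~\ref{orderpreserving} has been invoked — and that invocation in turn relies on knowing $\tilde{\mathcal{P}}$ is a lattice, which is precisely the role of Lemma~\ref{phi1}. Once this logical ordering is respected, the proof is a two-line composition.
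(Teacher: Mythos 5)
Your proposal is correct and is essentially the paper's proof: the paper also defines $\phi = \phi_{2} \circ \phi_{1}$ and cites Lemma~\ref{phi1} and Lemma~\ref{phi2} to conclude it is a lattice isomorphism. You merely spell out the routine verification that a composite of lattice isomorphisms is one, and the (correct) point that $\phi_{1}$ becomes a lattice isomorphism via Proposition~\ref{orderpreserving} once $\tilde{\mathcal{P}}$ is known to be a lattice.
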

\begin{proof} Let $$\phi: \mathcal{I}\longrightarrow \tilde{\mathcal{Y}}$$ be the composition $\phi = \phi_{2} \circ \phi_{1}$. Then $$\phi(I) = Y(\rho_{I})$$ for all $I \in \mathcal{I}$. By Lemma $\ref{phi1}$ and Lemma $\ref{phi2}$, $\phi$ is a lattice isomorphism.  
\end{proof}
Recall that $\mathcal{G}$ is the set of subgroups of $G_{Q}$ (so $\mathcal{G}$ is a lattice under inclusion) and $\tilde{\mathcal{G}} = \lbrace G_{I} : I \in \mathcal{I} \rbrace$ (the set of parabolic subgroups of $G_{Q}$). Our next aim is to show that $\tilde{\mathcal{G}} $ is a sublattice of $\mathcal{G}$ . For this we need the following lemmas. 
\begin{lemma}\label{G} For $I \in \mathcal{I}$, the map \begin{align*}
\pi_{Q}\mid_{G_{I}}&: G_{I} \longrightarrow Y(\rho_{I}), \\
\pi_{Q}\mid_{G_{I} }&: t_{i} \longmapsto (x_{i}, y_{i})
\end{align*} is a group isomorphism. That is, $G_{I} \cong Y(\rho_{I})$. 
\end{lemma}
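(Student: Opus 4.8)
The plan is to exploit the global isomorphism $\pi_Q\colon G_Q \to Y(\rho)$ from Lemma~\ref{braidgraphiso2} and simply restrict it to the parabolic subgroup $G_I$. Since $\pi_Q$ is injective on all of $G_Q$, its restriction to $G_I$ is automatically an injective group homomorphism, so the only real content is to identify the image $\pi_Q(G_I)$ with the Young subgroup $Y(\rho_I)$. First I would observe that $G_I$ is by definition generated by $\{t_i : t_i \in I\}$, so $\pi_Q(G_I)$ is the subgroup of $Y(\rho)$ generated by the transpositions $(x_i, y_i)$ with $t_i \in I$, where $\{x_i, y_i\} = E_i$ is the corresponding edge of the braid graph $\Gamma_Q$.

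Next I would unwind the definition of $\rho_I$: the graph $\Gamma_I$ is obtained from $\Gamma_Q$ by deleting every edge $E_i$ with $t_i \notin I$, and the parts $\alpha_1,\dots,\alpha_k$ of $\rho_I$ are exactly the vertex sets of the connected components $\Gamma_I^1,\dots,\Gamma_I^k$. So $Y(\rho_I) = \Sigma_{\alpha_1}\times\cdots\times\Sigma_{\alpha_k}$. The key combinatorial fact to invoke is the elementary statement that, for a graph $H$ on a vertex set $V$, the subgroup of $\mathrm{Sym}(V)$ generated by the transpositions corresponding to the edges of $H$ is precisely the direct product of the symmetric groups on the connected components of $H$ (each component being a connected graph, its edge-transpositions generate the full symmetric group on its vertices). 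Applying this to $H = \Gamma_I$ gives exactly that the subgroup of $\Sigma_{n+r}$ generated by $\{(x_i,y_i) : t_i \in I\}$ equals $\Sigma_{\alpha_1}\times\cdots\times\Sigma_{\alpha_k} = Y(\rho_I)$. Therefore $\pi_Q(G_I) = Y(\rho_I)$, and $\pi_Q|_{G_I}\colon G_I \to Y(\rho_I)$ is the desired isomorphism, sending $t_i \mapsto (x_i, y_i)$.

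The main obstacle, such as it is, is ensuring that the generated-subgroup computation is done cleanly: one must check both inclusions, namely that each $(x_i, y_i)$ with $t_i \in I$ lies in $\prod_j \Sigma_{\alpha_j}$ (immediate, since the endpoints of $E_i$ lie in the same component $\alpha_j$), and conversely that each factor $\Sigma_{\alpha_j}$ is generated by the transpositions of the edges of $\Gamma_I^j$ (this uses that $\Gamma_I^j$ is connected, so it contains a spanning tree, and adjacent transpositions along a tree already generate the full symmetric group on its vertices). Once this is in place the proof is essentially a one-line restriction argument. No subtlety arises from the isolated vertices of $\Gamma_I$ (vertices incident to no edge of $\Gamma_I$): these form singleton parts of $\rho_I$, contributing trivial factors $\Sigma_{\{v\}} = \{e\}$ to $Y(\rho_I)$, consistent with no generator acting on them.
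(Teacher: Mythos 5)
Your proposal is correct and follows essentially the same route as the paper: restrict the global isomorphism $\pi_Q$ (injectivity is then automatic), observe that each generator $(x_i,y_i)$ lands in the factor $\Sigma_{\alpha_j}$ of $Y(\rho_I)$ for the component containing $E_i$, and obtain surjectivity onto each $\Sigma_{\alpha_j}$ from the fact that edge-transpositions of a connected graph generate the full symmetric group on its vertices. The only cosmetic difference is that the paper proves this last fact inline, writing each $(m_l,m_{l+1})$ as an explicit conjugate of edge-transpositions along the unique path in the tree $\Gamma_I^j$, whereas you cite it as a standard lemma with a spanning-tree sketch.
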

\begin{proof} For ease of notation, we write $\pi\mid_{I}$ in place of $\pi_{Q}\mid_{G_{I} }$.
 
Clearly, $im( \pi\mid_{I}) \subseteq Y(\rho_{I})$ as for each $t_{i} \in I$, $E_{i} = \lbrace x_{i}, y_{i}\rbrace$ lies in some connected component $\Gamma^{j}_{I}$ of $\Gamma_{I}$. It follows that $x_{i}, y_{i} \in \alpha_{j}$ and so $\pi\mid_{I}(t_{i}) = (x_{i}, y_{i}) \in \Sigma_{\alpha_{j}} \subseteq Y(\rho_{I})$. 

Since $\pi_{Q}$ is an isomorphism, $\pi\mid_{I}: G_{I} \longmapsto Y(\rho_{I})$ is injective. Hence it is enough to show that $im(\pi \mid_{I}) = Y(\rho_{I})$ in order to show that $\pi\mid_{I}: G_{I} \longrightarrow Y(\rho_{I})$ is an isomorphism. 

Let $\rho_{I} = \sqcup_{j=1}^{k}\alpha_{j}$. For any $1 \leq j \leq k$, $\alpha_{j} = \lbrace m_{1},..., m_{u} \rbrace \subseteq \lbrace 1,..., n+r \rbrace$, we have $\Sigma_{\alpha_{j}} \cong \Sigma_{u}$ induced by the map
$$\tilde{\pi}: l \longmapsto m_{l}$$

As the set of elementary transpositions $\lbrace (l, l+1): 1 \leq j \leq u-1 \rbrace$ generates $\Sigma_{u}$, the set $\lbrace (m_{l}, m_{l+1}): 1 \leq j \leq u-1 \rbrace$ generates $\Sigma_{\alpha_{j}}$. For any $1 \leq l \leq u-1$, there exists a unique path in $\Gamma^{j}_{I}$ between the vertices $m_{l}$ and $m_{l+1}$ (as $m_{l}, m_{l+1} \in \alpha_{j}$):

\xymatrix{&&m_{l}=x_{1}\ar@{-}^{E_{i_{1}}}[r]&x_{2}\ar@{-}^{E_{i_{2}}}[r]&x_{3}\ar@{-}^{E_{i_{3}}}[r]&...\ar@{-}^{E_{i_{p-1}}}[r]&x_{p}=m_{l+1}}
and 
\begin{align*}
(m_{l}, m_{l+1}) &= (x_{1}, x_{2})(x_{2}, x_{3})...(x_{p-2}, x_{p-1})(x_{p-1}, x_{p})(x_{p-2}, x_{p-1})\\
&...(x_{2}, x_{3})(x_{1}, x_{2}) \\
&= \pi\mid_{I}(t_{i_{1}}t_{i_{2}}...t_{i_{p-1}}t_{i_{p}}t_{i_{p-1}}...t_{i_{2}}t_{i_{1}})
\end{align*} 
where $t_{i_{q}} \in I$ for each $1 \leq q \leq p$, as each edge $E_{i_{q}}$ lay in $\Gamma^{j}_{I}$. Thus $\pi\mid_{I}$ is surjective and so $G_{I} \cong Y(\rho_{I})$. 
\end{proof} 

\begin{theorem}\label{latticeiso}
The subset $\tilde{\mathcal{G}}$ is a sublattice of $\mathcal{G}$ and there exists a lattice isomorphism $$\Phi: \mathcal{I}\longrightarrow \tilde{\mathcal{G}},$$
$$\Phi: I\longmapsto G_{I}.$$
\end{theorem}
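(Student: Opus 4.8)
The strategy is to verify directly that the parabolic subgroups are closed under the two lattice operations of $\mathcal{G}$ — intersection and passing to the generated subgroup — and that under $I\mapsto G_{I}$ these match $\cap$ and $\cup$ on subsets of $T$. Granting this, $\tilde{\mathcal{G}}$ is a sublattice of $\mathcal{G}$ and $\Phi$ is a lattice homomorphism; it is surjective by the definition of $\tilde{\mathcal{G}}$, and injective because $G_{I}=G_{J}$ implies $Y(\rho_{I})=\pi_{Q}(G_{I})=\pi_{Q}(G_{J})=Y(\rho_{J})$ by Lemma~\ref{G}, whence $\rho_{I}=\rho_{J}$ and so $I=J$ by Lemma~\ref{latticeeqiv}. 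Thus $\Phi$ is a lattice isomorphism.

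The join poses no difficulty. Since $G_{I}=\langle I\rangle$ by definition, for any $I,J\in\mathcal{I}$ the subgroup of $G_{Q}$ generated by $G_{I}\cup G_{J}$ is the subgroup generated by $I\cup J$, so
\[
G_{I}\vee G_{J}=\langle G_{I}\cup G_{J}\rangle=\langle I\cup J\rangle=G_{I\cup J}\in\tilde{\mathcal{G}}.
\]

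For the meet I would transport the computation across the isomorphism $\pi_{Q}\colon G_{Q}\to Y(\rho)$ of Lemma~\ref{braidgraphiso2}. As $\pi_{Q}$ is an injective homomorphism, $\pi_{Q}(G_{I}\cap G_{J})=\pi_{Q}(G_{I})\cap\pi_{Q}(G_{J})=Y(\rho_{I})\cap Y(\rho_{J})$ by Lemma~\ref{G}. I then claim $Y(\rho_{I})\cap Y(\rho_{J})=Y(\rho_{I}\wedge\rho_{J})$: the containment $\supseteq$ holds because $\psi$ is order-preserving and $\rho_{I}\wedge\rho_{J}\le\rho_{I},\rho_{J}$, while $\subseteq$ holds because an element of $Y(\rho_{I})\cap Y(\rho_{J})$ maps setwise to itself every block of $\rho_{I}$ and every block of $\rho_{J}$, hence every block of their common refinement $\rho_{I}\wedge\rho_{J}$ (each such block being an intersection of a block of $\rho_{I}$ with a block of $\rho_{J}$, and a bijection of a finite set carrying a set into itself carries it onto itself). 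Since $\rho_{I}\wedge\rho_{J}=\rho_{I\cap J}$ by Lemma~\ref{phi1} and $\pi_{Q}(G_{I\cap J})=Y(\rho_{I\cap J})$ by Lemma~\ref{G}, injectivity of $\pi_{Q}$ gives $G_{I}\cap G_{J}=G_{I\cap J}\in\tilde{\mathcal{G}}$. Combining this with the join computation, $\Phi(I\vee J)=G_{I\cup J}=G_{I}\vee G_{J}$ and $\Phi(I\wedge J)=G_{I\cap J}=G_{I}\wedge G_{J}$, and the first paragraph finishes the proof.

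The one step carrying genuine content is the meet identity $G_{I}\cap G_{J}=G_{I\cap J}$ — this can fail for presentations of general groups — and it rests entirely on Lemma~\ref{G}, which realizes each parabolic subgroup $G_{I}$ as the concrete Young subgroup $Y(\rho_{I})$ inside $\Sigma_{n+r}$, together with the elementary fact that the intersection of two Young subgroups is the Young subgroup of the common refinement. One could alternatively phrase the whole argument as saying $\Phi$ is the composite of the lattice isomorphism $\phi$ of Lemma~\ref{phi7} with the inverse of the $\pi_{Q}$-induced bijection $\tilde{\mathcal{G}}\to\tilde{\mathcal{Y}}$, $G_{I}\mapsto Y(\rho_{I})$; but checking that this inverse respects joins reduces to the same Young-subgroup bookkeeping, so I would present the direct route above.
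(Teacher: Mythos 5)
Your proposal is correct and rests on the same foundations as the paper's own proof: the identification $\pi_{Q}(G_{I})=Y(\rho_{I})$ of Lemma \ref{G}, the partition identities $\rho_{I\cap J}=\rho_{I}\wedge\rho_{J}$, $\rho_{I\cup J}=\rho_{I}\vee\rho_{J}$ from Lemma \ref{phi1}, and Proposition \ref{psi}. The only difference is organizational: the paper obtains $\Phi$ by composing the lattice isomorphism $\phi$ of Lemma \ref{phi7} with the restriction to $\tilde{\mathcal{Y}}$ of the subgroup-lattice isomorphism induced by $\pi_{Q}^{-1}$, whereas you verify sublattice closure directly via $G_{I\cup J}=G_{I}\vee G_{J}$ and $G_{I\cap J}=G_{I}\cap G_{J}$ — essentially the same argument, as you note yourself in your closing paragraph.
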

\begin{proof} Let $Sub(\Sigma_{n+r})$ denote the set of subgroups of $\Sigma_{n+r}$. The group isomorphism $$(\pi_{Q})^{-1}: \Sigma_{n+r} \longrightarrow G_{Q}$$ induces a lattice isomorphism 
\begin{align*} Sub(\Sigma_{n+r})& \longrightarrow \mathcal{G}, \\
H& \longmapsto (\pi_{Q})^{-1}(H).
\end{align*}
Note that, by Lemma $\ref{G}$, $(\pi_{Q})^{-1}(Y(\rho_{I})) = G_{I}$ for each $Y(\rho_{I}) \in \tilde{\mathcal{Y}}$. Thus $\tilde{\mathcal{G}}$ is the image of the sublattice  $\tilde{\mathcal{Y}}$ under this induced lattice isomorphism. It follows that $\tilde{\mathcal{G}}$ is a sublattice of $\mathcal{G}$ and that taking the restriction of the induced lattice isomorphism to the sublattice $\tilde{\mathcal{Y}}$ of $Sub(\Sigma_{n+r})$ yields the following lattice isomorphism 
\begin{align*} \tilde{\mathcal{Y}} &\longrightarrow \tilde{\mathcal{G}}, \\
Y(\rho_{I}) &\longmapsto G_{I}.
\end{align*} Composing this map with $\phi$ (Lemma \ref{phi7}) will give the desired lattice isomorphism $\Phi$.
\end{proof}

Lemma $\ref{G}$ allows us to prove a further result for cluster groups of mutation-Dynkin type $A_{n_{1}}\sqcup...\sqcup A_{n_{r}}$. 

\begin{proposition}\label{parabolicgenerators} For any $ I \in \mathcal{I}$, $G_{I}\cap \lbrace t_{1},.., t_{n} \rbrace = I$.
\end{proposition}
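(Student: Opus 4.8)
The plan is to prove the two inclusions $I \subseteq G_I \cap \{t_1,\dots,t_n\}$ and $G_I \cap \{t_1,\dots,t_n\} \subseteq I$ separately. The first inclusion is immediate: every element of $I$ is by definition a generator of the subgroup $G_I$, and each $t_i$ is an element of $\{t_1,\dots,t_n\}$, so $I \subseteq G_I \cap \{t_1,\dots,t_n\}$ with no further work.

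For the reverse inclusion, the key tool is the isomorphism $\pi_Q \colon G_Q \longrightarrow Y(\rho)$ of Lemma~\ref{braidgraphiso2}, together with its restriction $\pi_Q\mid_{G_I}\colon G_I \longrightarrow Y(\rho_I)$ from Lemma~\ref{G}. Suppose $t_j \in G_I \cap \{t_1,\dots,t_n\}$; I want to show $t_j \in I$. Since $t_j \in G_I$, we have $\pi_Q(t_j) = (x_j, y_j) \in Y(\rho_I)$, where $E_j$ is the edge of $\Gamma_Q$ with endpoints $x_j, y_j$. Because $(x_j,y_j)$ lies in the Young subgroup $Y(\rho_I) = \Sigma_{\alpha_1}\times\cdots\times\Sigma_{\alpha_k}$ associated to the partition $\rho_I$, the transposition $(x_j,y_j)$ must fix every point outside a single part $\alpha_l$; since it moves both $x_j$ and $y_j$, both endpoints of $E_j$ lie in the same part $\alpha_l$, i.e.\ in the vertex set of a single connected component $\Gamma^l_I$ of $\Gamma_I$. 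The crucial point now is that $\Gamma_Q$ is a tree: the edge $E_j$ connects $x_j$ to $y_j$, and this is the unique path between them in $\Gamma_Q$; but $x_j$ and $y_j$ also lie in the same connected component $\Gamma^l_I$, so there is a path between them lying entirely within $\Gamma_I$, and by uniqueness this path must be exactly the single edge $E_j$. Hence $E_j$ is an edge of $\Gamma_I$, which by the construction of $\Gamma_I$ (obtained from $\Gamma_Q$ by deleting precisely the edges $E_i$ with $t_i \notin I$) forces $t_j \in I$.

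I expect the main subtlety — though it is not really an obstacle — to be making the tree argument watertight: one needs both that $E_j$ itself is a path from $x_j$ to $y_j$ in $\Gamma_Q$ and that membership of $x_j,y_j$ in a common component of $\Gamma_I$ yields \emph{a} path between them inside $\Gamma_I \subseteq \Gamma_Q$, and then invoke uniqueness of paths in a tree to identify the two. An alternative, slightly slicker phrasing avoids the tree argument entirely: by the remark following Lemma~\ref{braidgraphiso2}, the map $i \mapsto t_i$ is injective, so $t_j \in G_I \cap \{t_1,\dots,t_n\}$ together with $\pi_Q(t_j) = (x_j,y_j) \in Y(\rho_I)$ already pins down that $E_j$ has both endpoints in one component of $\Gamma_I$; the tree property then finishes as above. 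Either way the proof is short, and no new machinery beyond Lemmas~\ref{braidgraphiso2} and~\ref{G} is required.
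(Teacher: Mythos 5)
Your proof is correct, and while it rests on the same two pillars as the paper's — Lemma \ref{G} (so that $t_j \in G_I$ forces $\pi_Q(t_j) = (x_j,y_j) \in Y(\rho_I)$, hence both endpoints of $E_j$ lie in a single part of $\rho_I$) and the acyclicity of the braid graph — you organise the argument differently. The paper argues contrapositively through the maximal proper subset $J = \lbrace t_1,\dots,t_n\rbrace\setminus\lbrace t_q\rbrace$ for a fixed $t_q \notin I$: it describes $\rho_J$ explicitly, notes that deleting the edge $E_q$ separates $x_q$ from $y_q$, so $(x_q,y_q)\notin Y(\rho_J)$, and then pulls this back to $Y(\rho_I)$ via the monotonicity $I \subseteq J \Rightarrow Y(\rho_I)\subseteq Y(\rho_J)$ supplied by Lemma \ref{latticeeqiv} and Proposition \ref{psi}. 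You instead argue directly: once $x_j$ and $y_j$ lie in a common component of $\Gamma_I$, uniqueness of paths in the braid graph forces that path to be the single edge $E_j$, so $E_j$ survives in $\Gamma_I$ and $t_j\in I$. Your route is shorter and dispenses with the auxiliary subset $J$ and the monotonicity lemmas, at the price of invoking uniqueness of tree paths; the paper's version needs only the weaker fact that removing an edge of a tree disconnects its endpoints, but pays for it with the detour through $\rho_J$. One cosmetic point: for $r>1$ the graph $\Gamma_Q$ is a forest rather than a tree (a conflation the paper itself makes), but since $x_j$ and $y_j$ are endpoints of a common edge they lie in one tree component, so your uniqueness argument is unaffected.
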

\begin{proof} Suppose $Q$ is of mutation-Dynkin type $A_{n_{1}}\sqcup...\sqcup A_{n_{r}}$ with $ \Sigma_{i=1}^{r}n_{i} = n$ for some $r \geq 1$. Fix any $t_{q} \notin I$ and let $J = \lbrace t_{1},.., t_{n}\rbrace \setminus \lbrace t_{q} \rbrace$.

So $\Gamma_{Q}$ has connected components $\Gamma_{Q_{1}}, ..., \Gamma_{Q_{r}}$ where $\Gamma_{Q_{i}}$ is the braid graph corresponding to the indecomposable component $Q_{i}$. Moreover, $E_{q} = \lbrace x_{q}, y_{q} \rbrace$ is the edge in $\Gamma_{Q}$ corresponding to $t_{q}$ for some distinct $x_{q}, y_{q} \in \lbrace 1,..., n+r\rbrace$. Thus $\Gamma_{J}$ will consist of $r+1$ connected components $$\Gamma_{Q_{1}}, ..., \Gamma_{Q_{m-1}}, \Gamma_{Q_{m}}^{1}, \Gamma_{Q_{m}}^{2}, \Gamma_{Q_{m+1}},..., \Gamma_{Q_{r}}$$ where $\Gamma_{Q_{m}}$ is the component containing $E_{q}$ and $\Gamma_{Q_{m}}^{1}$ and $ \Gamma_{Q_{m}}^{2}$ are the two connected components of the graph obtained from $\Gamma_{Q_{m}}$ by deleting the edge $E_{q}$. 

Without loss of generality, we can assume $x_{q} \in \alpha_{1}$ and $y_{q} \in \alpha_{2}$ where $\alpha_{1}$ is the vertex set of $\Gamma_{Q_{m}}^{1}$ and $\alpha_{2}$ is the vertex set of $\Gamma_{Q_{m}}^{2}$

Denote the vertex set of each $\Gamma_{Q_{j}}$ by $\alpha_{q_{j}}$. It follows that $\rho_{J} = (\sqcup_{j=1}^{m-1}\alpha_{q_{j}})\sqcup(\alpha_{1}\sqcup \alpha_{2})\sqcup(\sqcup_{j=m+1}^{r}\alpha_{q_{j}})$ where $x_{q} \in \alpha_{1}$ and $y_{q} \in \alpha_{2}$ and $$Y(\rho_{J}) = \Sigma_{\alpha_{q_{1}}} \times ...\times \Sigma_{\alpha_{q_{m-1}}} \times  \Sigma_{\alpha_{1}} \times \Sigma_{\alpha_{2}} \times \Sigma_{\alpha_{q_{m+1}}} \times ...\times \Sigma_{\alpha_{q_{r}}}.$$ 

Note that for any $\sigma \in Y(\rho_{J})$ and $m \in \alpha_{1}$, $\sigma(m) \in \alpha_{1}$. Thus $\sigma(x_{q}) \neq y_{q}$ for any $\sigma \in Y(\rho_{J})$ and so $\pi_{Q}(t_{q}) = (x_{q}, y_{q}) \notin Y(\rho_{J})$.

As $I \subseteq J$, we have $Y(\rho_{I}) \subseteq Y(\rho_{J})$ (by Lemma $\ref{latticeeqiv}$ and Proposition $\ref{psi}$) so $\pi(t_{q}) \notin Y(\rho_{I})$ hence $\pi_{Q}(t_{q}) \notin im(\pi\mid_{I})$, so $t_{q} \notin G_{I}$ by Lemma $\ref{G}$. 

Thus $G_{I}\cap \lbrace t_{1},.., t_{n} \rbrace\subseteq I$. The reverse inclusion is obvious as $I \subseteq G_{I}$ and $I \subseteq \lbrace t_{1},.., t_{n} \rbrace$.

Note that in the case when $r=1$ (so $Q$ is of mutation-Dynkin type $A_{n}$) $\Gamma_{Q}$ will contain just one connected component. Thus $\Gamma_{Q_{m}}$ will simply be $\Gamma_{Q}$ meaning $\Gamma_{J}$ will consist of two connected components $\Gamma_{Q}^{1}$ and $\Gamma_{Q}^{2}$ and the proof for this case continues as for when $r >1$. 

\end{proof}
\section{parabolic subgroups are cluster groups}

Suppose $Q$ is a quiver of mutation-Dynkin type with connected components $Q_{1},..., Q_{r}$ of type $A_{n_{1}},..., A_{n_{r}}$, respectively, where $n_{1},..., n_{r} \in \mathbb{Z}^{+}$ and $ n = \Sigma_{i=1}^{r} n_{i}$. In this section, we prove that parabolic subgroups of the cluster group $G_{Q}$ have presentations given by restricting the cluster group presentation of the whole group. 

\begin{proposition}\label{subquiver}
\begin{itemize}
\item[(A)]\cite{FWZ} Any full subquiver of an indecomposable quiver of mutation-Dynkin type is a disjoint union  of quivers of mutation-Dynkin type. 
\item[(B)] Any full subquiver of a mutation-Dynkin type $A_{n}$ quiver is of mutation-Dynkin type $A_{n_{1}}\sqcup...\sqcup A_{n_{r}}$ for some $r \geq 1$.
\end{itemize} 
\end{proposition}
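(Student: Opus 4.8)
The plan is to argue through the triangulation model, part (A) being quoted from \cite{FWZ}. By Theorem~\ref{triangulationquivers} I realise $Q$ as $Q_{\mathcal{T}}$ for a triangulation $\mathcal{T}$ of a convex $(n+3)$-gon $P$, so that the vertices of $Q$ are the diagonals of $\mathcal{T}$, equivalently the edges of the braid graph $\Gamma_Q$ — a tree whose vertices are the triangles of $\mathcal{T}$, each of valency $1$, $2$ or $3$. Given $I \subseteq T$, I pass to the graph $\Gamma_I$ obtained by deleting from $\Gamma_Q$ the edges not in $I$, and let $\Gamma^{1}_I,\dots,\Gamma^{r}_I$ be the connected components of $\Gamma_I$ that carry at least one edge. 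The aim is to show that each such component is the braid graph of a triangulation of a convex polygon and that $Q_I$ is the corresponding disjoint union of type-$A$ quivers.

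Fix a component $C = \Gamma^{j}_I$. Its vertices index a set of triangles of $\mathcal{T}$ and its edges a set $S_C$ of diagonals of $\mathcal{T}$, and I claim their union $U_C$ is a triangulated sub-polygon of $P$ whose interior diagonals are exactly $S_C$. This rests on two facts, both of which use that $\Gamma_Q$ is a tree: (i) the triangles of $\mathcal{T}$ incident to a fixed vertex $v$ of $P$ form a path in $\Gamma_Q$ (successive ones share a spoke at $v$, which is a diagonal and hence an edge of $\Gamma_Q$), so $C$ meets this path in a contiguous segment and $U_C$ has no pinch point at $v$; and (ii) $U_C$ cannot surround a hole, since the triangles of $\mathcal{T}$ around a hole would form a cycle in $\Gamma_Q$. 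Hence $U_C$ is a disk, i.e.\ combinatorially a triangulated convex polygon, and Theorem~\ref{triangulationquivers} produces a quiver $Q_{\mathcal{T}_C}$ of mutation-Dynkin type $A_{|S_C|}$ whose braid graph is $C$.

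Finally I must check $Q_I \cong Q_{\mathcal{T}_1}\sqcup\dots\sqcup Q_{\mathcal{T}_r}$. By Definition~\ref{quivertriangulation} the arrow between two diagonals of $\mathcal{T}$ is determined locally by the single triangle they span and by the anticlockwise cyclic order of its sides; if two diagonals in $I$ span a common triangle $\Delta$ of $\mathcal{T}$, then $\Delta$ is incident in $\Gamma_Q$ to both of the kept edges, so $\Delta$ lies in one component $C$, and in passing to $\mathcal{T}_C$ the triangle $\Delta$ — and the cyclic order of its sides — is unchanged, so the arrow, orientation included, is the same in $Q_{\mathcal{T}_C}$. Conversely every arrow of each $Q_{\mathcal{T}_C}$ is an arrow of $Q_I$, and no arrow of $Q_I$ joins diagonals in different components. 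This gives $Q_I$ of mutation-Dynkin type $A_{n_1}\sqcup\dots\sqcup A_{n_r}$ with $n_j = |S_{C_j}|$, and part (A) furnishes, if preferred, a shortcut to the a priori fact that $Q_I$ is a disjoint union of mutation-Dynkin quivers. The main obstacle is the geometric step of the second paragraph: showing that a connected component of $\Gamma_I$ really does glue its triangles into a bona fide sub-polygon, with neither pinch points nor holes — this is exactly where the tree structure of $\Gamma_Q$ is essential. Once that is secured, matching the arrows of the full subquiver with those of the sub-triangulations is a routine local verification.
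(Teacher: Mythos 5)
Your proof is correct and takes essentially the same route as the paper: part (A) is quoted from \cite{FWZ}, and part (B) is deduced from the triangulation model of Theorem~\ref{triangulationquivers}, identifying the full subquiver $Q_{I}$ with the quiver of a disjoint union of triangulated smaller polygons. The paper compresses this into the single remark that removing a vertex of $Q$ amounts to cutting the $(n+3)$-gon along the corresponding diagonal; your braid-graph/tree analysis merely fills in the details of that cutting step.
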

\begin{proof} For a proof of the statement $(A)$, see \cite{FWZ}.

The second statement $(B)$ follows from Theorem $\ref{triangulationquivers}$ as taking a full subquiver of a mutation-Dynkin type $A_{n}$ quiver is equivalent to cutting along a diagonal of the corresponding triangulation of an $(n+3)$-gon to obtain a disjoint union of triangulations of smaller polygons.

\end{proof}

Consider a subset $I \subseteq T$. Let $Q_{I}$ denote the full subquiver, $Q_{I}$, of $Q$ on vertices corresponding to the elements of $I$. Consider the group with presentation given by taking the defining generators to be the set $I$ and the set of relations to be all the corresponding defining relations of the cluster presentation of $G_{Q}$ that consist only of elements of $I$. The group corresponding to this presentation is a cluster group of mutation-Dynkin type as it is precisely $G_{Q_{I}}$. By Proposition \ref{subquiver}, $Q_{I}$ will be a disjoint union of quivers $Q'_{1},..., Q'_{r'}$ of mutation-Dynkin type $A_{n'_{1}},..., A_{n'_{r'}}$, respectively, for some $1 \leq n'_{i} \leq n$ and $r' \geq 1$.

Consider the parabolic subgroup, $G_{I}$, generated by $I$. From Lemma $\ref{G}$, $G_{I}$ is isomorphic to $Y(\rho_{I})$. 
We will show that there exists an isomorphism between $G_{Q_{I}}$ and $Y(\rho_{I})$ and so, by transitivity, the parabolic subgroup $G_{I}$ has a cluster group presentation associated to $Q_{I}$.

In particular, we will show that this isomorphism gives the following commutative diagram.\hspace{4cm} \xymatrix{\ar @{} [dr] |{\circlearrowleft} G_{I} \ar[d] \ar[r] & G_{Q} \\Y(\rho_{I}) \ar[r]& G_{Q_{I}} \ar[u]}

To distinguish between the defining generators of $G_{Q}$ and $G_{Q_{I}}$, we label the generators of the cluster presentation of $G_{Q_{I}}$ by $t'_{i}$, for each $t_{i} \in I$.

\begin{theorem}\label{paraboliccluster}
Suppose $Q$ is a quiver of mutation-Dynkin type with connected components $Q_{1},..., Q_{r}$ of type $A_{n_{1}},..., A_{n_{r}}$ for some $n_{1},..., n_{r} \in \mathbb{Z}^{+}$. For any $I \subset T$, there exists an isomorphism 
\begin{align*}
G_{Q_{I}} &\longrightarrow G_{I}, \\
t'_{i} &\longmapsto t_{i}.
\end{align*}
\end{theorem}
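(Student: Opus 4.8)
The plan is to exhibit a surjective homomorphism $f\colon G_{Q_I}\to G_I$ sending $t'_i\mapsto t_i$ and then upgrade it to an isomorphism by a counting argument. To get $f$, note that, as explained just before the theorem, the cluster presentation of $G_{Q_I}$ is obtained from that of $G_Q$ by keeping only the generators indexed by $I$ and only those defining relations that involve exclusively those generators; this uses that $Q_I$ is a \emph{full} subquiver, so that an arrow of $Q_I$ (resp.\ a simple arrow, resp.\ a chordless oriented cycle all of whose arrows are simple) is exactly such a configuration in $Q$ with all of its vertices in $I$. Consequently each defining relation of $G_{Q_I}$ maps under $t'_i\mapsto t_i$ to a relation that already holds in $G_Q$, hence in $G_I$, and the universal property of group presentations produces the homomorphism $f$; it is onto since $I$ generates $G_I$. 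As $G_{Q_I}$ (a cluster group of mutation-Dynkin type, hence finite) and $G_I\le G_Q$ are both finite, it then suffices to prove $\lvert G_{Q_I}\rvert=\lvert G_I\rvert$.

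For the orders: by Proposition~\ref{subquiver}, $Q_I=Q'_1\sqcup\dots\sqcup Q'_{r'}$ with each $Q'_j$ indecomposable of mutation-Dynkin type $A_{n'_j}$, so Lemma~\ref{directproduct2} and Lemma~\ref{braidgraphiso1} give $G_{Q_I}=G_{Q'_1}\times\dots\times G_{Q'_{r'}}\cong\Sigma_{n'_1+1}\times\dots\times\Sigma_{n'_{r'}+1}$, whence $\lvert G_{Q_I}\rvert=\prod_{j}(n'_j+1)!$. On the other side Lemma~\ref{G} gives $G_I\cong Y(\rho_I)=\prod_{\alpha\in\rho_I}\Sigma_\alpha$, so $\lvert G_I\rvert=\prod_{\alpha\in\rho_I}\lvert\alpha\rvert!$. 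Thus the whole theorem comes down to the combinatorial claim that the parts of $\rho_I$ of size at least two are precisely sets of sizes $n'_1+1,\dots,n'_{r'}+1$ (the rest being singletons, which do not affect the product).

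I expect this claim to be the main obstacle, and I would settle it using the triangulation picture already invoked in the proof of Proposition~\ref{subquiver}(B). Fix a triangulation $\mathcal T$ giving rise to $Q$; passing to the full subquiver $Q_I$ amounts to cutting $\mathcal T$ along each diagonal $d_i$ with $t_i\notin I$, which yields a triangulation $\mathcal T'$ of a disjoint union of polygons whose non-triangle pieces are the $(n'_j+3)$-gons triangulated so as to give $Q'_1,\dots,Q'_{r'}$. Cutting along diagonals does not change the set of triangles and makes two triangles adjacent in $\mathcal T'$ exactly when they were adjacent in $\mathcal T$ along some $d_i$ with $t_i\in I$; therefore the braid graph of $\mathcal T'$ coincides with the graph $\Gamma_I$ obtained from $\Gamma_Q$ by deleting the edges $E_i$ with $t_i\notin I$. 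Hence the components of $\Gamma_I$ carrying at least one edge are exactly the braid graphs $\Gamma_{Q'_j}$, each a tree with $n'_j$ edges and so $n'_j+1$ vertices, and all other components are isolated vertices — which is the claim about $\rho_I$. So $\lvert G_I\rvert=\lvert Y(\rho_I)\rvert=\prod_j(n'_j+1)!=\lvert G_{Q_I}\rvert$, and $f$ is an isomorphism. (One could instead verify directly from Definition~\ref{quivertriangulation} that two edges of $\Gamma_Q$ meet in a vertex if and only if the corresponding vertices of $Q$ are joined by an arrow, obtaining the same matching of components without the cutting language.) Finally, the square displayed before the theorem commutes by construction, since through Lemma~\ref{G} and the relabelling identifying $\Gamma_I$ with $\bigsqcup_j\Gamma_{Q'_j}$ (together with isolated vertices) the map $f$ is nothing but the product of the braid-graph isomorphisms $G_{Q'_j}\xrightarrow{\ \sim\ }\Sigma_{n'_j+1}$.
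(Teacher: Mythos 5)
Your argument is correct, and its combinatorial core is the same as the paper's: both proofs hinge on the observation that cutting the triangulation $\mathcal{T}$ along the diagonals $d_i$ with $t_i\notin I$ preserves all triangles, so that the braid graph of $Q_I$ is exactly $\Gamma_I$ (components with an edge being the trees $\Gamma_{Q'_j}$, the rest isolated vertices), together with Lemma~\ref{G} identifying $G_I$ with $Y(\rho_I)$. Where you diverge is in how the isomorphism is assembled. The paper composes two explicit isomorphisms, $(\pi_Q\mid_{G_I})^{-1}\circ\pi_{Q_I}\colon G_{Q_I}\to Y(\rho_I)=Y(\rho')\to G_I$, which sends $t'_i\mapsto t_i$ because both maps send the respective generators to the same transposition $(x_i,y_i)$; injectivity and surjectivity come for free from Lemmas~\ref{braidgraphiso2} and~\ref{G}. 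You instead use the universal property of presentations (fullness of $Q_I$ guaranteeing that every defining relation of $G_{Q_I}$ is a defining relation of $G_Q$) to get a surjection $t'_i\mapsto t_i$ directly, and then prove bijectivity by comparing orders, $\prod_j(n'_j+1)!=\prod_{\alpha\in\rho_I}\lvert\alpha\rvert!$, via Lemmas~\ref{directproduct2}, \ref{braidgraphiso1} and~\ref{G}. Your route buys a cleaner handle on why the map is generator-to-generator (it is built in, with no need to match labellings of $\Gamma_{Q_I}$ and $\Gamma_I$ edge by edge), at the modest cost of invoking finiteness and the surjective-plus-equal-order criterion; the paper's route is slightly more direct and works without any counting, but must verify that the induced labelling makes the two transposition assignments literally agree. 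Both are complete proofs resting on the same geometric lemma.
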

\begin{proof} We show that there exists an isomorphism
\begin{align*}
G_{Q_{I}} &\longrightarrow Y(\rho_{I}), \\
t'_{i} &\longmapsto \pi_{I}(t_{i}).
\end{align*}

We begin by considering the braid graphs $\Gamma_{Q}$ and $\Gamma_{Q_{I}}$ and show that  $ \Gamma_{I} = \Gamma_{Q_{I}} $.

Let $\mathcal{T}$ be a triangulation giving rise to $Q$. From the proof of Proposition $\ref{subquiver}$, a triangulation $\mathcal{T'}$ giving rise to $Q_{I}$ can be obtained from $\mathcal{T}$ by cutting along each of the diagonals lying in the set $ \lbrace d_{i} : t_{i} \notin I \rbrace$. 

We consider the braid graph, $\Gamma_{Q_{I}}$ of $Q_{I}$. We remark that in obtaining $\mathcal{T'}$, no triangle of $\mathcal{T}$ will have been deleted. Thus the number of vertices of $\Gamma_{Q_{I}}$ equals the number of vertices in $\Gamma_{Q}$, which equals the number of vertices in $\Gamma_{I}$. Moreover, all triangles in $\mathcal{T'}$ will have the same orientation as in $\mathcal{T}$ except those which were bounded by a diagonal that was `cut'. As one diagonal of $\mathcal{T}$ bounds exactly two triangles, the set of edges in $\Gamma_{Q_{I}}$ will equal the set of edges in $\Gamma_{Q}$ minus the set of edges which correspond to one of the `cut' diagonals. These are precisely the edges corresponding to all elements of $T\setminus I$. Thus $\Gamma_{Q_{I}}$ is equal to the graph obtained from $\Gamma_{Q}$ by deleting  the edges corresponding to the $t_{i} \notin I$, which is precisely $\Gamma_{I}$.

The fixed labelling of $\Gamma_{Q}$ induces a labelling on the vertices $\Gamma_{Q_{I}}$ from which we obtain a partition $\rho'$ of $\lbrace 1,..., n+r\rbrace$. As $\Gamma_{Q_{I}} = \Gamma_{I}$, this partition is precisely $\rho_{I}$ meaning $Y(\rho') = Y(\rho_{I})$. 

By Lemma $\ref{braidgraphiso2}$, there exists an isomorphism
\begin{align*}
\pi_{Q_{I}}:G_{Q_{I}}&\longrightarrow Y(\rho'), \\
\pi_{Q_{I}}:t'_{i}&\longmapsto (x_{i}, y_{i})
\end{align*} where $x_{i}$ and $y_{i}$ are the endpoints of the edge in $\Gamma_{Q_{I}}$ corresponding to $t'_{i}$. Moreover, as $\Gamma_{Q_{I}} = \Gamma_{I}$ and applying by Lemma $\ref{G}$, the following map is an isomorphism. 
\begin{align*}
\pi_{Q}\mid_{I}:G_{I}&\longrightarrow Y(\rho_{I}), \\
\pi_{Q}\mid_{I}:t_{i}&\longmapsto (x_{i}, y_{i}).
\end{align*}
As $Y(\rho') = Y(\rho_{I})$, we can define $\pi := (\pi_{Q}\mid_{I})^{-1} \circ \pi_{Q_{I}}$, which will be the desired isomorphism,
\begin{align*}
\pi_{Q}\mid_{I}^{-1} \circ \pi_{Q_{I}}: G_{Q'} &\longrightarrow G_{I}, \\
\pi_{Q}\mid_{I}^{-1} \circ \pi_{Q_{I}}: t'_{i} &\longrightarrow t_{i}.
\end{align*}
\end{proof}

 \end{document}